\documentclass[11pt]{amsart}
\usepackage{amsmath,amssymb,amsthm}
\usepackage[margin=1.15in]{geometry}
\usepackage{xcolor}
\usepackage[hidelinks]{hyperref}

\theoremstyle{plain}
\newtheorem{theorem}{Theorem}[section]
\newtheorem{proposition}[theorem]{Proposition}
\newtheorem{prop}[theorem]{Proposition}
\newtheorem{lemma}[theorem]{Lemma}
\newtheorem{corollary}[theorem]{Corollary}
\newtheorem{conjecture}[theorem]{Conjecture}
\theoremstyle{definition}
\newtheorem{definition}[theorem]{Definition}
\newtheorem{remark}[theorem]{Remark}

\newcommand{\one}{\mathbf 1}
\newcommand{\Zp}{\Z_{+}}
\newcommand{\h}{\mathfrak h}
\newcommand{\gr}{\Upsilon}

\newcommand{\ii}{\sqrt{-1}}

\DeclareMathOperator{\ad}{ad}

\newcommand\ov{\overline}

\renewcommand\({\left(}
\renewcommand\){\right)}

\newcommand\be{\beta}

\newcommand\g{\mathfrak g}

\renewcommand\l{\lambda}

\renewcommand\d{\delta}

\renewcommand\a{\alpha}

\newcommand{\Z}{\mathbb Z}

\newcommand\Q{\mathbb Q}

\newcommand\la{\lambda}

\newcommand\e{\epsilon}
\newcommand\C{\mathbb C}
\newcommand\R{\mathbb R}
\newcommand\si{\sigma}

\newcommand{\rank}{{\rm rank}}

\newcommand{\sdim}{\text{\rm sdim}}
\newcommand{\Gr}{\operatorname{Gr}}

\usepackage{amsmath,amssymb}
\usepackage{tikz}
\usetikzlibrary{decorations.markings}
\tikzset{
  dyn/.style={baseline=-0.6ex, x=0.95cm, line width=0.8pt},
  root/.style={circle, draw, line width=0.8pt, minimum size=8pt, inner sep=0pt},
  E/.style={root, label={[font=\scriptsize, inner sep=1.5pt]above:$0$}},
  O/.style={root, label={[font=\scriptsize, inner sep=1.5pt]above:$\tfrac12$},
    path picture={\draw[thin]
      (path picture bounding box.north west) -- (path picture bounding box.south east)
      (path picture bounding box.south west) -- (path picture bounding box.north east);}},
  dots/.style={inner sep=1pt},
  arrowmark/.style={postaction={decorate, decoration={markings,
    mark=at position .5 with {\draw[line width=0.8pt] (-3pt,5pt) -- (2.5pt,0) -- (-3pt,-5pt);}}}},
}

\tikzset{Eb/.style={root, label={[font=\scriptsize, inner sep=1.5pt]below:$0$}}}
\newcommand{\bea}{\begin{eqnarray}}
\newcommand{\eea}{\end{eqnarray}}

\begin{document}

\title[Unitary VOAS]{On unitary vertex operator algebras}

\author[Kac, M\"oseneder, Papi]{Victor G.~Kac}
\author[]{Pierluigi M\"oseneder Frajria}
\author[]{Paolo  Papi}

\begin{abstract}
We prove that a conformal vertex algebra with a non-zero invariant Hermitian form is
unitary as soon as it admits a non-zero positive energy module with a positive
definite invariant Hermitian form. Using free field realizations, we deduce that
the principal simple $W$-algebra $W_k(\g)$ of a simple Lie algebra $\g$ is unitary
for central charge $c\ge \rank(\g)+24\bigl((\rho|\rho^\vee)+\|\rho\|\,\|\rho^\vee\|\bigr)$, and,
if $\g$ is simply laced, for all $c\ge\rank(\g)$. Among basic Lie superalgebras
 which are not Lie algebras, the free field argument applies to $osp(1|2n)$ and
$osp(2|2n)$. It shows that $W_k(osp(1|2n))$ is unitary for $c\ge n+\frac12$ and
gives an explicit bound for $osp(2|2n)$.  Next,
we prove that if $\mathfrak s$ is an even $sl_2$-subalgebra of a simple Lie
algebra $\g$ whose centralizer is not abelian, then the simple
$W$-algebra $W_k(\g,\mathfrak s)$ is unitary for at most finitely many $k$. We
illustrate this on the $W$-algebras of $sl_{qm}$ attached to the nilpotents of
Jordan type $q^m$: unitarity is possible only at two levels, and for $q=2$ it
does occur at both. For instance, the $W$-algebra of $sl_4$ attached to the
partition $2+2$ is unitary only for $k=-2$ and $k=-\frac32$. This fails for Lie superalgebras, as minimal $W$-algebras show, but still holds under stronger assumptions. Finally, we prove that a simple $W$-algebra $W_k(\g,\mathfrak s)$ can be unitary for  only  finitely many   $k$ if $\g$ is a simple Lie algebra and $\mathfrak s$ is not a distinguished $sl_2$ subalgebra.
\end{abstract}

\maketitle

\section{Introduction}
The most important result of this paper is the following general theorem on unitarity of conformal vertex algebras (= vertex operator algebras), which was posed by us as Conjecture 3 in \cite{KMP1}.
\begin{theorem}\label{11} If a 
  conformal vertex algebra $V$ endowed with an invariant Hermitian form admits a non-zero unitary positive energy module, then $V$ is unitary.
\end{theorem}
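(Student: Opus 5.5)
The plan is to \emph{transport positivity from $M$ to $V$ through matrix coefficients}. Fix the data of \cite{KMP1}: a conjugate-linear involution $\phi$ of $V$, the $\phi$-invariant Hermitian form $H$ on $V$ (normalized by $H(\one,\one)=1$), and a non-zero positive energy module $M$ with a positive definite $\phi$-invariant Hermitian form $\langle\cdot,\cdot\rangle$. Choose a non-zero vector $m$ of minimal energy in $M$, so $L_0m=h\,m$ and $\langle m,m\rangle>0$.

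For homogeneous $a,b\in V$ I would study the two-point function $F_{a,b}(z)=\langle Y^M(a,z)m,\,Y^M(b,z)m\rangle$, and more generally the Gram matrices of finite families $\sum_i Y^M(a_i,z_i)m$, all of which are positive semidefinite. First step: move $Y^M(a,z)$ across the form by invariance. This gives $\langle m,\,Y^M(A(z)\phi(a),z^{-1})\,Y^M(b,w)m\rangle$, where $A(z)=e^{zL_1}(-z^{-2})^{L_0}$ is the usual adjoint twist. Second step: use associativity in $M$ to rewrite the product of the two vertex operators as $Y^M\bigl(Y(A(z)\phi(a),z^{-1}-w)b,\,w\bigr)$ and expand in modes $(A(z)\phi(a))_{(j)}b$. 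The key identity I expect is that the component of $A(z)\phi(a)_{(j)}b$ along $\one$, in the appropriate degree, equals $H(a,b)$ up to a non-zero universal factor. This is essentially the definition of the invariant form via $\langle \one,\, Y(A(z)\phi(a),z^{-1})b\rangle$. Acting on $m$, that vacuum term contributes $H(a,b)\langle m,m\rangle$.

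Third step, a leading-order extraction. Put $a,b$ in a fixed weight space $V_\Delta$ and take a suitable limit or Laurent coefficient in $z$, namely the coefficient of the lowest power of $z$ in the expansion around $|z|=1$. This should isolate exactly the vacuum channel. The terms $u_{(n)}m$ with $u$ of positive weight should drop out by minimality of the energy of $m$, since they produce lower modes that either kill $m$ or land in a different power of $z$. Positivity of the Gram matrix for arbitrary linear combinations $\sum_i c_i Y^M(a_i,z)m$ then passes to this extremal coefficient. Extracting an extreme coefficient of a positive-definite family is still positive semidefinite, because it is an integral against a positive kernel on the circle. The conclusion is that $\sum_{i,j}\bar c_i c_j H(a_i,a_j)\langle m,m\rangle\ge 0$, so $H$ is positive semidefinite on each $V_\Delta$.

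Final step: pass from semidefinite to definite. The radical of an invariant Hermitian form is an ideal of $V$, so $H$ descends to a positive definite form on $V/\ker H$. Since $H\ne 0$, in the setting of \cite{KMP1} ($V$ simple, or unitarity understood for the quotient) this gives unitarity.

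The main obstacle is the third step. One has to justify rigorously that the vacuum channel is the one selected by the chosen coefficient, with a non-zero and positive universal factor. This requires tracking the signs coming from $(-z^{-2})^{L_0}$ and the contributions of $L_{1}$-descendants. If this becomes messy, I would first restrict to quasi-primary $a,b$, where $A(z)$ acts by a scalar, prove positivity there, and then extend to $L_{-1}$-descendants using the $sl_2$-structure ($L_{-1},L_0,L_1$) and invariance.
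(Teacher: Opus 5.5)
Your architecture is the paper's: you prove positivity of $H$ on quasiprimaries by extracting the vacuum channel from norms in $M$, then propagate it to $L_{-1}$-descendants through the orthogonal $sl_2$-decomposition \eqref{eq:sl2dec}. However, the crux, your third step, rests on a wrong selection principle and fails as stated. Minimality of the energy of $m$ does not remove the non-vacuum channels, and no single Laurent coefficient isolates the vacuum one. For quasiprimary $a$ of weight $\Delta$ and $b=g(a)$, invariance and $b_nm=0$ give, for $n$ large,
\[
\|a_{-n}m\|^2=\bigl([b_n,a_{-n}]m,m\bigr)=\sum_{j=0}^{2\Delta-1}\binom{n+\Delta-1}{j}\bigl((b_{(j)}a)_0m,m\bigr).
\]
Every channel $b_{(j)}a$ enters through its zero mode, which preserves the lowest energy space and is in general non-zero on it. For $a=L$ and $m$ a highest weight vector, $\|L_{-n}m\|^2=\bigl(2nh+\frac{c}{12}(n^3-n)\bigr)\|m\|^2$. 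The term $2nh$ comes from the channels $L_{(0)}L=TL$ and $L_{(1)}L=2L$, and no energy argument kills it. Read literally, the lowest coefficient is $\|a_0m\|^2$, which is unrelated to $H(a,a)$: for the Heisenberg vertex algebra and $m=|\mu\rangle$ it equals $\mu^2\|m\|^2$, while $(J,J)=1$.

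The missing idea is that the channels are separated by their growth in $n$, not by energy. The vacuum channel $j=2\Delta-1$ carries the binomial of top degree, and $b_{(2\Delta-1)}a=(a,a)\one$, so
\[
\ov{(a,a)}\,\|m\|^2=\lim_{n\to\infty}\|a_{-n}m\|^2\Big/\binom{n+\Delta-1}{2\Delta-1}\ \ge0 .
\]
In your generating-function language this is the coefficient of the highest-order pole $(1-zw)^{-2\Delta}$ at the diagonal. It is reached as a boundary limit of rescaled Gram matrices, using that positive semi-definiteness is a closed condition, and not as a Laurent coefficient or an integral over the circle. Even making $Y^M(a,z)m$ converge already requires the polynomial bound above, so the formal mode computation of Lemma~\ref{lem:key} is the economical route.

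With this replacement, plus the orthogonality of \eqref{eq:sl2dec} and the factor $k!\prod_{s=1}^k(2\Delta+s-1)>0$ for descendants, your plan becomes the paper's proof. You need neither positive definiteness on $M$ nor minimal energy: any homogeneous $m$ with $\|m\|^2>0$, together with $n>j_0$, will do. Since unitarity here means positive semi-definiteness, your final step passing to a definite form is superfluous.
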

Recall that to any $sl_2$-subalgebra $\mathfrak s= span\{e,x,f\}$ with $[e,f]=x, [x,e]=e, [x, f]=-f$, of a simple Lie algebra, or, more generally,  basic Lie superalgebra $\g$, and $k\in\C$, different from $k_{crit}=- h^\vee$, one associates the freely generated universal $W$-algebra $W^k(\g,\mathfrak s)$ and its (unique) simple quotient  $W_k(\g,\mathfrak s)$, which are conformal vertex algebras. If $\mathfrak s$ is  a principal $sl_2$-subalgebra, these conformal vertex algebras are called {\it principal}. Using Theorem \ref{11} we prove
\begin{theorem}\label{12} If $\g$ is a simple Lie algebra, then the principal simple $W$-algebra $W_k(\g)$ is unitary for the following values of the central charge
$$c_{\g,k}\ge m_0:=\rank(\g)+24\big((\rho|\rho^\vee)+\|\rho\|\cdot\|\rho^\vee\|\big).$$
If, moreover, $\g$ is simply laced, then  $W_k(\g)$ is  unitary for
$c\ge\rank(\g)$.
\end{theorem}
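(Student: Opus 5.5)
We apply Theorem \ref{11} to $V=W_k(\g)$. Two inputs are needed: an invariant Hermitian form on $W_k(\g)$, and a non-zero unitary positive energy module. For the first, we use the conjugate-linear involution of $\g$ fixing a compact real form, adapted to the principal $sl_2$ (e.g. $\phi(e)=f$, $\phi(x)=x$ on the Chevalley generators). For real $k$ it induces a conjugate-linear involution of $W^k(\g)$ fixing the conformal vector and the generating primary fields. By the general theory of invariant Hermitian forms on conformal vertex algebras (the Li/KMP construction), $W^k(\g)$ then carries a non-zero invariant Hermitian form whose kernel is the maximal ideal, so it descends to $W_k(\g)$.

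For the unitary module we use the free field (Miura) realization. For non-critical $k$, $W^k(\g)$ embeds into the Heisenberg vertex algebra $M(\h)$ of rank $\ell=\rank\g$. The conformal vector is
\[
L=\tfrac12\sum_i :h_i h^i:+\,\partial \beta ,
\qquad
\beta=\alpha_0\rho^\vee-\alpha_0^{-1}\rho
\]
up to normalization, with $\alpha_0^2=k+h^\vee$. This gives $c=\ell-12\|\beta\|^2$. The Fock modules $M(\h,\mu)$, $\mu\in\h^*$, restrict to positive energy $W^k(\g)$-modules, with $L_0$ bounded below by $\frac12(\mu|\mu-2\beta)$ on the vacuum.

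A Fock module is unitary for the Heisenberg algebra when the contragredient structure matches the Hermitian structure. This happens when $\beta$ is purely imaginary, i.e. $\alpha_0\in\ii\R$, which forces $k+h^\vee<0$ real, and $\mu$ is chosen with $\mu-\beta$ real; here we regard $\h_\R^*$ with the positive form. Then $L_n^\dagger=L_{-n}$ holds on $M(\h,\mu)$ with a positive definite form, and so do the other Miura generators, since they are real polynomials in the $h$'s with coefficients in powers of $\alpha_0$ of the right parity. Any non-zero $W_k$-submodule, or rather the subquotient generated by the highest weight vector, is then a non-zero unitary positive energy module.

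Two issues remain. First, the module is over $W^k$, not $W_k$. We need the highest-weight submodule generated by $|\mu\rangle$ to factor through the simple quotient. Unitarity itself forces this: the maximal ideal lies in the kernel of the invariant form, and a positive definite invariant form on the module kills the action of null vectors. This is the key technical step, and I expect it to be the main obstacle. The argument I would try: if $v$ is in the radical, pairing $Y(v,z)w$ against the module, using invariance and positive definiteness, shows $Y(v,z)=0$ on the cyclic module.

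Second, we must translate the condition on $\alpha_0$ into the range of $c$. Writing $\alpha_0=\ii t$ with $t>0$ gives
\[
c=\ell+12\bigl(t^2\|\rho^\vee\|^2+t^{-2}\|\rho\|^2+2(\rho|\rho^\vee)\bigr).
\]
To verify the positive-definiteness of the non-quadratic Miura generators one likely needs a further reality condition. The constant $m_0$ suggests minimizing over $t$ plus an extra factor two, since AM–GM gives minimal $c=\ell+24\bigl((\rho|\rho^\vee)+\|\rho\|\|\rho^\vee\|\bigr)$ exactly. The values $c\ge m_0$ are then all attained as $t$ ranges over $(0,\infty)$. In the simply laced case $\rho=\rho^\vee$, and the invariance of the construction under $\alpha_0\mapsto-\alpha_0^{-1}$ (Feigin–Frenkel duality) allows real $\alpha_0$ with $\beta$ real but small, $\|\beta\|^2=(\alpha_0-\alpha_0^{-1})^2\|\rho\|^2$. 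Taking $\mu$ appropriately (imaginary shift) one gets unitary Fock modules for $\alpha_0\in\R$, giving $c=\ell-12(\alpha_0-\alpha_0^{-1})^2\|\rho\|^2$; but the claim says $c\ge\ell$. So instead one should take $\alpha_0=\ii t$, giving $c=\ell+12\|\rho\|^2(t+t^{-1})^2\ge \ell+48\|\rho\|^2$. This suggests a different realization is needed for $\ell\le c<m_0$. I would try one with $\beta\in\ii\h_\R^*$ arbitrary via the symmetry of the Miura transformation in the simply laced case, which yields every $c=\ell+12s^2$, $s\ge0$.
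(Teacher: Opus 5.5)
Your argument for $c\ge m_0$ is essentially the paper's: Miura realization at $t=k+h^\vee<0$, a Fock module of imaginary momentum with a positive definite invariant form, then Theorem \ref{11}. Your AM--GM computation of $m_0$ is also right. The genuine gap is the second statement. For $\g$ simply laced and $\ell\le c<m_0$ you end with a vague suggestion, and the missing construction is the entire content of that part.

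In your own variables, the condition $\beta=\alpha_0\rho-\alpha_0^{-1}\rho\in\ii\h_\R$ holds not only for $\alpha_0\in\ii\R$ but also for $|\alpha_0|=1$, i.e.\ at the \emph{non-real} levels $t=e^{\ii\theta}$. There $\beta=2\ii\sin(\theta/2)\rho$ and $(\tilde\rho|\tilde\rho)/t=2(\cos\theta-1)\|\rho\|^2\le0$. The central charge $c=\ell+24(1-\cos\theta)\|\rho\|^2$ then sweeps exactly $[\ell,m_0]$. Since $\beta$ is determined by $k$, ``$\beta\in\ii\h_\R^*$ arbitrary'' can only mean this. In the level-$t$ normalization the involution is $\si_t(u)=-\ov t\,\ov u$ (Lemma \ref{lem:invol}(2)).

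What breaks is your reality argument for the higher Miura generators. Since $k\notin\R$, complex conjugation carries $\Psi_t(W^k(\g))$ to $\Psi_{\ov t}(W^{\ov k}(\g))=\Psi_{1/t}(W^{k'}(\g))$, a different subalgebra; equivalently, $\si_t$ exchanges the screening currents with the dual ones. Hence $\si_t=\theta_t\circ\ov{(\cdot)}$ preserves the Miura image exactly when Feigin--Frenkel duality holds in the free field form $\theta_t(\Psi_{1/t}(W^{k'}(\g)))=\Psi_t(W^k(\g))$ at that very level. You named the right ingredient, $\alpha_0\mapsto-\alpha_0^{-1}$, but used it at real $\alpha_0$; it has to be composed with complex conjugation on $|\alpha_0|=1$.

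Moreover, the duality in the literature is proved only for generic levels. A given $c$ comes from just two points of the circle, which are not known to be generic. The paper fills this with Proposition \ref{prop:FFall}: injectivity of the Miura map for every $k$ makes $t\mapsto\Psi_t(W^k(\g))_\Delta$ a morphism $\C^\times\to\Gr(N,\mathfrak V_\Delta)$. Two such morphisms that agree on a set with countable complement coincide.

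There are also three smaller points.

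\emph{The module issue.} The ``main obstacle'' you identify is self-inflicted. Apply Theorem \ref{thm:main} to the universal algebra $\Psi_t(W^k(\g))$, of which the Fock module is a module by restriction. Semi-definiteness on $W^k(\g)$ gives definiteness on $W_k(\g)$, since the radical is the maximal ideal. Your proposed lemma that null vectors act trivially on a unitary module is unjustified, because nothing ties the vacuum form of $V$ to the form on $M$. In fact $M_k$ is faithful over the simple Heisenberg algebra, hence over $\Psi_t(W^k(\g))$, so it is a $W_k(\g)$-module only if $W^k(\g)$ is already simple.

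\emph{The involution.} It must fix the triple $(e,x,f)$ and be intertwined by $\Psi_t$ with the Heisenberg involution for which the Fock form is invariant. This is the split real form, acting on $\h$ as conjugation with respect to $\h_\R$ (Lemma \ref{lem:miura}). No compact-form conjugation fixes the nilpotent $f$. Also, $\phi(e)=f$, $\phi(x)=x$ is inconsistent, since $\phi([e,f])=[f,e]$.

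\emph{Real $\alpha_0$.} For real $\alpha_0\ne\pm1$ (so $t>0$) there is no unitary Fock module of this kind. The momentum $\l_t$ is real and non-zero, the involution is forced to fix $\tilde\rho$, and the oscillators in that direction have negative norm. So that attempt fails for a reason other than the range of $c$.
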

The two bounds are Theorems \ref{4.1} and   \ref{4.2}. For $\g=sl_3$
the second statement, i.e.\ the unitarity of the $W_3$-algebra for $c\ge2$, was
proved by Carpi, Tanimoto and Weiner \cite{CTW}. \par We  also analyze the super case. The outcome is as follows
  \begin{theorem}\label{14} Let $\g$ be a basic Lie superalgebra which is  not a Lie algebra and let $W_k(\g)$ be its principal simple $W$-algebra.  The free field argument proving Theorem \ref{12} applies to $\g=osp(1|2n)$ and $\g=osp(2|2n)$, $n\ge1$, and, for involutions of diagram type (Remark \ref{s:remscreen}), only to these. It gives:
$W_k(osp(1|2n))$ is unitary for  $c_{\g,k}\ge n+\frac12$; $W_k(osp(2|2n))$ is unitary for $c_{\g,k}\ge m_0+1$,  where $m_0$ is the
expression of Theorem \ref{12} computed for $\g=osp(2|2n)$, with $\rho^\vee=x$ and $\rho=\rho_{\ov 0}-\rho_{\ov 1}$: explicitly, for $c_{\g,k}\ge n+2+2n(n-1)(4n+1)+2\sqrt2\,n(2n-1)\sqrt{(n-1)(2n+1)}$.
\end{theorem}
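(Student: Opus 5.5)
The plan is to run the free field argument behind Theorem \ref{12} for a basic Lie superalgebra $\g$ that is not a Lie algebra. Recall what that argument needs. We realize $W^k(\g)$ inside a Heisenberg vertex algebra $M_{\h}$ on the Cartan subalgebra $\h$, via the Miura map, as the intersection of kernels of screening operators $\oint e^{-\alpha_i/\nu}$ over simple roots $\alpha_i$. We also need a conjugate-linear involution $\phi$ of $M_{\h}$ that preserves the image and matches the invariant Hermitian form on $W^k(\g)$. When the Heisenberg Fock module $M_{\h}(\mu)$ carries a positive definite invariant form compatible with $\phi$, it is a unitary positive energy module over (a quotient of) $W^k(\g)$. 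The conformal vector is $\omega = \omega_{\rm Sug} + \partial a$ with $a$ a multiple of the relevant $\rho$-type vector. The unitarity constraint is that $\mu$ must be ``real'' with respect to $\phi$, while $a$ transforms under $\phi$ with the appropriate sign. Once a nonzero such module exists, Theorem \ref{11} gives unitarity of the simple quotient $W_k(\g)$.

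\emph{Step 1: classification.} The free field realization by Heisenberg alone requires every simple root in the chosen (distinguished or suitable) Dynkin diagram to be even or odd non-isotropic. An odd isotropic simple root forces a $\beta\gamma$/$bc$ system, or a screening $e^{\alpha/\nu}$ with $(\alpha|\alpha)=0$, which destroys the positivity of the Fock module's form. The form on $\h$ must also be real and positive definite along the real form fixed by $\phi$. We go through the list of basic Lie superalgebras and their principal nilpotents (those with an $sl_2$ that is principal and even grading, i.e.\ $sl(n\pm1|n)$, $osp(1|2n)$, $osp(2|2n)$, $osp(2n\pm1|2n)$, $osp(2n|2n)$, $osp(2n+2|2n)$, $D(2,1;a)$, $G(3)$-type exceptions as applicable). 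For involutions of diagram type (Remark \ref{s:remscreen}) we check which admit a diagram with no isotropic simple roots and a positive definite restricted form on $\h_\R$. This leaves $osp(1|2n)$, whose diagram has the odd non-isotropic root, and $osp(2|2n)$. In the latter the extra rank-one piece gives an $\h$ whose $\phi$-real form is still definite, after using the $U(1)$ direction $x$.

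\emph{Step 2: $osp(1|2n)$.} Here $\h$ has rank $n$ and the relevant $\rho$ and $\rho^\vee$ combine so that the shift vector $a$ can be chosen purely imaginary relative to $\phi$. This is the same mechanism as in the simply laced case of Theorem \ref{12}, because the Miura screening exponents are proportional to the coroots of a self-dual system. The central charge then reads $c = n + \tfrac12 + (\text{nonnegative term})$, with the $\tfrac12$ coming from the odd non-isotropic root contributing a free fermion. The ``real'' condition on $\mu$ and $a$ is solvable exactly when $c\ge n+\tfrac12$.

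\emph{Step 3: $osp(2|2n)$.} We copy the non-simply-laced argument of Theorem \ref{4.1}, taking $\rho^\vee=x$ and $\rho=\rho_{\ov0}-\rho_{\ov1}$. We write $a = s\rho + t\rho^\vee$ and compute $c = \rank + 1 - 12\|a\|^2$-type expressions, where the extra $1$ accounts for the additional fermionic/Heisenberg direction. The optimal reality constraint then yields $c\ge m_0+1$, using Cauchy--Schwarz for $(\rho|\rho^\vee)\le\|\rho\|\|\rho^\vee\|$. The explicit number follows by computing $(\rho|\rho)$, $(x|x)$ and $(\rho|x)$ for $osp(2|2n)$ in the standard basis $\epsilon_1,\delta_1,\dots,\delta_n$; this part is routine.

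I expect the main obstacle to be Step 1's exclusion part: showing that no other diagram and diagram-type involution works. This requires checking, case by case, that an isotropic root or an indefinite real form is unavoidable.
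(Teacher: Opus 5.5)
Your proposal has a genuine gap, and it starts with the criterion in Step 1, which contradicts your own conclusion.

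\textbf{Step 1 uses the wrong obstruction.} The principal simple system of $osp(2|2n)$ is $\{\delta_1-\delta_2,\dots,\delta_{n-1}-\delta_n,\ \delta_n-\epsilon,\ \delta_n+\epsilon\}$, and its last two roots are odd isotropic. They span $\g_{1/2}$, so the realization lives in $V^t(\h)\otimes F(\g_{1/2})$ with $F(\g_{1/2})$ a $bc$-system. That factor does carry a positive definite invariant form, after composing with the parity automorphism if needed. So isotropic simple roots are no obstruction. Nor is $(\cdot|\cdot)$ positive definite on $\h_\R$ here, since $(\epsilon|\epsilon)=-\frac12$. Your test (no isotropic simple roots, definite form on $\h_\R$) would therefore retain only $osp(1|2n)$. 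Appealing to ``the $U(1)$ direction $x$'' does not help: $x=\rho^\vee$ lies in the span of the $\delta_i$, not in $so(2)$.

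For $\si_\h=c\,\tau\circ\ov{(\cdot)}$ of diagram type, the correct obstruction has two parts:
\begin{itemize}
\item If $t\notin\R$, condition (b) of \eqref{cx:sigcond} forces $\rho=\l\rho^\vee$ with $\l>0$, $|t|=\l$ and $c=-\ov t/\l$. Among superalgebras that are not Lie algebras, this leaves only $osp(1|2n)$ (Proposition \ref{s:prop3}).
\item If $t<0$ or $|t|=\l$, the Heisenberg factor $H_2$ of the form in Lemma \ref{lem:fock} is positive definite iff $(\cdot|\cdot)$ is positive definite on $\h_\R^{\tau=1}$ and negative definite on $\h_\R^{\tau=-1}$.
\end{itemize}
The signature condition is what isolates $osp(1|2n)$, with $\tau=\mathrm{id}$, and $osp(2|2n)$, with $\tau$ exchanging $\delta_n\pm\epsilon$. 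As written, your Step 1 establishes neither the inclusion of $osp(2|2n)$ nor the ``only these'' part.

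\textbf{Step 3 fails as written.} Copying Theorem \ref{4.1} means taking $\si=\ov{(\cdot)}$. But $\tilde\rho$ lies in the span of the $\delta_i$, so $\epsilon\in\tilde\rho^{\perp}$ and $t(\epsilon|\epsilon)=-t/2>0$ for $t<0$. Hence $H_2$ is not positive definite. You must compose the split-form conjugation with the automorphism of $\g$ induced by a reflection in $O(2)$. This automorphism acts on $\h$ by $\epsilon\mapsto-\epsilon$ and fixes $\mathfrak s_{pr}$, so it preserves the Miura image. It also turns the real form into $\operatorname{span}_\R\{\delta_i\}\oplus\ii\R\epsilon$, on which $t(\cdot|\cdot)$ is negative definite. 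Since only real $t<0$ is available, the bound is the AM--GM minimum $m_0+1$.

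\textbf{Step 2 omits the key input.} For $osp(1|2n)$ you need the circle $|t|=\frac12$ to reach $n+\frac12\le c<n+\frac12+4n(4n^2-1)$. There $\si_t(u)=-2\ov t\,\ov u$ exchanges the screening currents with their duals. It preserves $\Psi_t(W^k(\g))$ only by the duality $\theta_t(\Psi_{t'}(W^{k'}(\g)))=\Psi_t(W^k(\g))$ with $tt'=\frac14$. That duality, like your description of the image as an intersection of kernels of screenings, is known only for generic $k$, and these non-real levels are not known to be generic. The paper proves the duality at all non-critical levels in Proposition \ref{prop:FFall}, using injectivity of the Miura map and a Grassmannian argument. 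It handles $t<0$ by the split real form and the endpoint $c=n+\frac12$ by continuity (Remark \ref{closed}). Your proposal needs these three ingredients or substitutes for them.
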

The two bounds are Theorems \ref{s:thm5}  and   \ref{s:thm7}, respectively. 
The proof of the second part of Theorem \ref{12} uses the Feigin-Frenkel duality for simply laced simple Lie algebras \cite{FF}. The proof of the first part of Theorem \ref{14}  uses  a variant of FF-duality:
$W_k(osp(1|2n))\cong W_{k'}(osp(1|2n))$ with $(k+h^\vee)(k'+h^\vee)=\frac{1}{4}$ \cite{G}, \cite{CL}, \cite{CLNS}. Both dualities
are available in the literature for generic levels, while we need them at
non-real levels, which are not known to be generic. Therefore we prove in
Proposition \ref{prop:FFall}, using the injectivity of the Miura map \cite{N},
that they hold at all non-critical levels. \par
The obstruction to an extension of Theorem \ref{12} to all basic Lie superalgebras  is 
twofold: the invariant Hermitian form must be positive definite on a real
form of a Cartan subalgebra $\h$, fixed, up to a phase, by a symmetry of the
principal Dynkin diagram (Proposition \ref{s:prop4}), and, in order to go below
the bound given by real levels, $\rho$ and $\rho^\vee$ must be proportional
(Propositions \ref{s:prop2}, \ref{s:prop3}).  

Our next result is the following 
\begin{theorem}\label{31} If   $\g$ is a simple Lie algebra, $\mathfrak s$ is an even $sl_2$-subalgebra such that the centralizer of $\mathfrak s$ is not abelian, then the simple $W$-algebra $W_k(\g,\mathfrak s)$ can be unitary for at most a finite number of $k$.
\end{theorem}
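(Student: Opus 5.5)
We will use the standard necessary conditions for unitarity of $W_k(\g,\mathfrak s)$ from \cite{KMP1}. Since $\mathfrak s$ is even, $x$ has integer eigenvalues, so $W^k(\g,\mathfrak s)$ is $\Z_{\ge0}$-graded and its weight-one space is spanned by the fields $J^{\{a\}}$, $a\in\g^\natural$, where $\g^\natural$ is the centralizer of $\mathfrak s$. These fields generate an affine vertex algebra $V^{\beta_k}(\g^\natural)$. For each simple ideal $\g^\natural_i$ of $\g^\natural$, the level $\beta_i(k)$ is a linear (affine) function of $k$, read off from the known formula for the $\lambda$-bracket $[J^{\{a\}}{}_\lambda J^{\{b\}}]$.

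Suppose $W_k(\g,\mathfrak s)$ is unitary. Then the positive definite invariant Hermitian form restricts to the subalgebra generated by weight-one fields, and the image of $V^{\beta_i(k)}(\g^\natural_i)$ in $W_k$ is a unitary affine vertex algebra. Because $\g^\natural$ is not abelian, at least one such $\g^\natural_i$ is simple, and then $\beta_i(k)$ must lie in $\Z_{\ge0}$. (The case $\beta_i(k)=0$ is allowed only if that part acts trivially.) This gives the first key step: unitarity forces $\beta_i(k)\in\Z_{\ge0}$, so $k$ runs through a discrete arithmetic progression $k\in k_0+\frac1{m}\Z_{\ge0}$ for some $m$. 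One must check here that the slope of $\beta_i$ is nonzero, which holds since $\beta_i(k)=k+\text{const}$ up to a positive factor.

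The second key step bounds $k$ from above. Unitarity also requires the central charge, and more precisely the conformal weights, to be compatible with positivity. We will use two facts. First, the Sugawara vector of $V^{\beta(k)}(\g^\natural)$ is subtracted from $L$ to give the coset Virasoro field $L-L^{\g^\natural}$, whose central charge $c(k)-c_{\g^\natural}(k)$ must be $\ge0$ in a unitary theory. Second, there is a stronger constraint from the fields of conformal weight $3/2$ or $2$ that transform nontrivially under $\g^\natural$: in $W_k$, the norms of these fields (equivalently, specific states built from $J^{\{a\}}$ and the generators of higher weight) are rational functions of $k$. For large $k$ along the progression, some such norm becomes negative. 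Concretely, we plan to take the weight-$2$ generators $G^{\{v\}}$ with $v\in\g_{1}$ (grading by $\mathrm{ad}\,x$) that lie in a nontrivial $\g^\natural$-module $U$, and compute $\|J^{\{a\}}_{-1}G^{\{v\}}\|$ or the Gram matrix on the weight-$2$ subspace in terms of $k$. Nonabelianness ensures $U$ is nontrivial, so a nonzero cross term appears.

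We expect the main obstacle to be showing that some Gram determinant becomes negative for all sufficiently large admissible $k$. One natural approach is to take the unitarity bound for highest weight vectors of $\widehat{\g^\natural}$ at level $\beta(k)$. The generators $G^{\{v\}}$ give $\g^\natural$-highest weight vectors of weight $\mu$ and conformal weight $2$. In a unitary theory, the conformal weight must be at least the Sugawara value $\frac{(\mu|\mu+2\rho^\natural)}{2(\beta(k)+h^\vee_i)}$. This bound is automatic for large $k$, so instead we will use the \emph{integrable} condition $(\mu|\theta_i^\vee)\le\beta_i(k)$ together with the coset Virasoro positivity. Combining these, the coset $c$ behaves like a rational function whose sign, computed for large $k$ from the leading asymptotics, is negative when $\g^\natural$ is nonabelian, which contradicts unitarity. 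This leaves only finitely many admissible $k$. If the leading-term computation is inconclusive, we fall back on an explicit Kac-determinant computation on the weight-$2$ space, which grows polynomially in $k$ with a negative leading coefficient.
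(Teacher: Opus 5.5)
Your first step is the paper's. For a simple ideal $\mathfrak a$ of $\g^\natural$ the level is $k_{\mathfrak a}=ak+b$, where $a>0$ is the index of $\mathfrak a$ in $\g$. Unitarity forces $k_{\mathfrak a}\in\Z_{\ge0}$, so $k$ is real and lies in a discrete set bounded below. You do pass over one point. To invoke the classification of unitary affine vertex algebras, you need $\phi$ to preserve the image of $\mathfrak a$ in $W_k(\g,\mathfrak s)_1$. The paper rules out the alternative, that $\phi$ moves it to another ideal: then $(J^{\{v\}},J^{\{v\}})=-\langle\phi(J^{\{v\}})_{(1)}J^{\{v\}}\rangle=0$, because distinct ideals are orthogonal, contradicting positivity.

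Your second step, as written, does not establish the upper bound. You attribute the failure of positivity at large $k$ to $\g^\natural$ being non-abelian, but that is not the mechanism. The tools you spend most time on cannot give an upper bound either. The Sugawara bound on conformal weights and the integrability condition $(\mu|\theta_i^\vee)\le\beta_i(k)$ both become weaker as $k$ grows; you notice this for the first. There are also no weight-$\frac32$ fields, because $\mathfrak s$ is even.

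The missing fact is simply the central charge formula
\[
c(\g,\mathfrak s,k)=\dim\g_0+24(\rho|x)-\frac{12\|\rho\|^2}{k+h^\vee}-12(x|x)(k+h^\vee).
\]
Its leading term is $-12(x|x)k$ with $(x|x)>0$, since $x$ is a non-zero element of $\h_\R$. Unitarity gives $c=2(L,L)\ge0$, so this already bounds $k$ above. Combined with the first step, only finitely many $k$ remain. This is exactly the paper's proof, and non-abelianness is used only in the first step.

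Your coset inequality $c-c_{\g^\natural}\ge0$ would also work, since $c_{\g^\natural}$ stays bounded as $k\to+\infty$. However, it needs extra justification: that $\phi$ fixes $L^\natural$, and how to handle the centre of $\g^\natural$. The plain bound $(L,L)\ge0$ avoids both, and no Gram-determinant computation is needed.
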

In fact,   in Theorem \ref{prop:gen} we prove an enhanced version of Theorem \ref{31} for basic Lie superalgebras, and in Proposition \ref{prop:blocks} we provide an explicit criterion to verify the assumptions of Theorem \ref{prop:gen}
in the case of  basic  Lie superalgebras, which are not Lie algebras.
\par
We illustrate Theorem \ref{31} on the nilpotent elements $f_{q^m}$ of
$sl_{qm}$ of Jordan type $q^m$. 
We also show that, if $\g^\natural$ is semisimple, $W_k(\g,\mathfrak s)$ is unitary and
its central charge equals that of its affine subalgebra, then
$W_k(\g,\mathfrak s)$ coincides with its affine subalgebra (Proposition
\ref{prop:conformal}). The proof follows \cite{AMP23}; for $\g$ of
exceptional type all the relevant levels, except one ($E_7$, $D_4(a_1)$, $k=-12$),   have already been detected as collapsing
in \cite{AACLMMP}.
 In Section \ref{sec:sl4} we collect, in Table \ref{tab:sl4}, what is known on the unitarity of the $W$-algebras $W_k(sl_4,\mathfrak s)$ for all $\mathfrak s$; this includes the classification of the unitary levels for the subregular nilpotent element (Proposition \ref{prop:31}). \par
 Finally, in Section \ref{sec:nd} we prove that, for a simple Lie algebra $\g$, the conclusion of
Theorem \ref{31} holds  whenever
 the centralizer of $\mathfrak s$ is non-zero (Theorem \ref{nd:main}).
Since principal $W$-algebras are unitary for infinitely many $k$ (Theorem
\ref{12}), this leads us to conjecture that, for a simple Lie algebra $\g$, $W_k(\g,\mathfrak
s)$ is unitary for infinitely many $k$ if and only if $\mathfrak s$ is principal (Conjecture
\ref{nd:conj}). By Theorem \ref{nd:main}, the conjecture is open only for the distinguished
non-principal $sl_2$-subalgebras, which exist in types $B_n$ $(n\ge4)$, $C_n$ $(n\ge3)$, $D_n$
$(n\ge4)$, $E_6$, $E_7$, $E_8$, $F_4$, $G_2$; in particular it holds for $\g$ of type $A_n$,
$B_2$ and $B_3$. 
\par
The paper is divided into eight sections:   the first two are introductory; Section 3 deals with Theorem \ref{11}, Section 4 (resp. 5)  deals with  Theorem \ref{12} (resp. Theorem \ref{14}). Section 6 is devoted to 
Theorem \ref{31}; Section 7 deals with the $W$-algebras of $sl_4$, and Section 8
with the non-distinguished nilpotent elements (Theorem \ref{nd:main}) and with Conjecture
\ref{nd:conj}.\par
Throughout the paper the base field is $\C$.

\section{Notation and background}\label{sec:back}
\subsection{Conformal vertex algebras}
A \emph{conformal vertex algebra} is a vertex (super)algebra $V$ with a Virasoro vector
$L$ such that $L_{-1}=T$, $L_0$ is diagonalizable with the eigenspace decomposition
$V=\bigoplus_{n\in\frac12\Zp}V_n$, where $\dim V_n<\infty$ and $V_0=\C\one$
\cite[Def.~2.1]{KMP}.  We write $\Delta_a$ for the eigenvalue=conformal weight and $p(a)\in\{0,1\}$
for the parity of a $L_0$-eigenvector $a\in V$, and $p(a,b)=(-1)^{p(a)p(b)}$.  Since
$V_0=\C\one$, every $v\in V$ has a well defined \emph{expectation value} $\langle
v\rangle$, defined by $P_{V_0}(v)=\langle v\rangle\one$.

Set $\Gamma=\frac12\Z$. A $\Gamma/\Z$-\emph{grading} $\gr$ of $V$ is a
decomposition $V=V^{[0]}\oplus V^{[1/2]}$ into subsuperspaces, which is a vertex
algebra grading and is preserved by $L_0$ \cite[Def.~2.3]{KMP}; in particular
$\one\in V^{[0]}$. For the notion of a $\gr$-\emph{twisted} $V$-module $M$ we
refer to \cite[Def.~2.4]{KMP}; when $\gr$ is the trivial grading $V=V^{[0]}$ this
is the usual notion of a $V$-module. 

For $a\in V_{\Delta_a}\cap V^{[\gamma]}$ we write:
\[
Y(a,z)=\sum_{n\in\mathbb Z}a_{(n)}z^{-n-1}=\sum_{n\in -\Delta_a+\mathbb Z}a_n z^{-n-\Delta_a},
\qquad a_n=a_{(n+\Delta_a-1)},
\]
so that $a_nV_j\subseteq V_{j-n}$ and $a=a_{-\Delta_a}\mathbf 1$. On a $\Upsilon$-twisted
$V$-module $M$ we use the same conventions:
\[
Y^M(a,z)=\sum_{n\in\gamma+\mathbb Z}a^M_{(n)}z^{-n-1}=\sum_{n\in\gamma-\Delta_a+\mathbb Z}a^M_n z^{-n-\Delta_a},
\qquad a^M_n=a^M_{(n+\Delta_a-1)}.
\]

  Taking $n=0$ in the
Borcherds identity \cite[(2.13)]{KMP} gives the (super)commutator formula
\begin{equation}\label{eq:comm}
[a_m,b_n]:=a_mb_n-p(a,b)b_na_m=\sum_{j\in\Z_{\ge 0}}\binom{m+\Delta_a-1}{j}(a_{(j)}b)_{m+n},
\end{equation}
valid on $V$ and on any $\gr$-twisted module, where $\binom{x}{j}=x(x-1)\cdots(x-j+1)/j!$.  

\subsection{The operator $A(z)$ and invariant Hermitian forms}
Let $\phi$ be a conjugate linear involution of a conformal vertex algebra $V$ such that $\phi(L)=L$. Set
\[
g(a)=e^{-\pi\ii\left(\Delta_a+\frac12 p(a)\right)}\phi(a),
\]
so that $g^2=I$, $g(L)=L$ and $gL_1=-L_1g$ \cite[\S3--4]{KMP}.  Put
$A(z)=e^{zL_1}z^{-2L_0}g$; if $a$ is quasiprimary, i.e.\ $L_1a=0$, then
$A(z)a=z^{-2\Delta_a}g(a)$, and $g(a)$ is quasiprimary as well.
A $\Gamma/\Z$-grading $\gr$ is said to be \emph{compatible with $\phi$} if
$\phi(V^{[\gamma]})\subseteq V^{[\gamma]}$ for all $\gamma$ \cite[\S3]{KMP}; then
$g(V^{[\gamma]})\subseteq V^{[\gamma]}$ as well.

All Hermitian forms below are \emph{conjugate linear in the first argument} and
linear in the second.  A Hermitian form $(\cdot,\cdot)$ on $V$ is
\emph{$\phi$-invariant} if
\begin{equation}\label{eq:inv}
(v,Y(a,z)u)=(Y(A(z)a,z^{-1})v,u),\qquad a,u,v\in V
\end{equation}
\cite[Def.~4.1]{KMP}; the same formula, with $Y$ replaced by $Y^M$ on both sides,
defines $\phi$-invariance of a Hermitian form on a $\gr$-twisted $V$-module $M$
\cite[Def.~6.4]{KMP}.  Comparing coefficients in \eqref{eq:inv} yields the mode form
of the invariance condition,
\begin{equation}\label{eq:modeinv}
(v,a_nu)=\Bigl(\sum_{t\in \Z_{\ge0}}\tfrac1{t!}\bigl(L_1^tg(a)\bigr)_{-n}v,\;u\Bigr),
\end{equation}
which for quasiprimary $a$ reduces to \cite[(4.4)]{KMP}
\begin{equation}\label{eq:qpinv}
(v,a_nu)=(g(a)_{-n}v,u).
\end{equation}
For a $\phi$-invariant Hermitian form on a $\gr$-twisted module $M$, with $\gr$
compatible with $\phi$, formulas \eqref{eq:modeinv} and \eqref{eq:qpinv} hold for
$a\in V^{[\gamma]}$, $n\in\gamma-\Delta_a+\Z$ and $u,v\in M$ (cf.\
\cite[(3.18)]{KMP}).

We repeatedly use the following results of \cite{KMP}.

\begin{theorem}[{\cite[Thm.~4.3, Cor.~4.7, Rmk.~4.4]{KMP}}]\label{thm:kmp1}
Let $V$ be a conformal vertex algebra and $\phi$ a conjugate linear involution.
\begin{enumerate}
\item The space of $\phi$-invariant Hermitian forms on $V$ is isomorphic to the space
of conjugate linear functionals $F\in V_0^{\dagger}$ vanishing on $L_1V_1$ and
satisfying $\langle F,g(v)\rangle=\overline{\langle F,v\rangle}$ on $V_0$.  In
particular a non-zero such Hermitian form exists if and only if $L_1V_1=0$, and then it is
unique once normalized by $(\one,\one)=1$.
\item The kernel of a $\phi$-invariant Hermitian form is an ideal of $V$.
\item For quasiprimary $a$ one has $(a,a)=\bigl\langle g(a)_{\Delta_a}a_{-\Delta_a}\one\bigr\rangle
=\bigl\langle g(a)_{(2\Delta_a-1)}a\bigr\rangle$, that is
\begin{equation}\label{eq:top}
g(a)_{(2\Delta_a-1)}a=(a,a)\,\one .
\end{equation}
\end{enumerate}
\end{theorem}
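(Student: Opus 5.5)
The statement just recalled is \cite[Thm.~4.3, Cor.~4.7, Rmk.~4.4]{KMP}; here is how I would reprove its three parts.

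For (1), I would reduce to Li's theorem on invariant symmetric bilinear forms. That theorem says such forms on a conformal vertex algebra correspond to linear functionals on $V_0$ vanishing on $L_1V_1$, i.e.\ to $(V_0/L_1V_1)^*$. Given a $\phi$-invariant Hermitian form $(\cdot,\cdot)$, set $B(a,b)=(\phi(a),b)$. Using $g(a)=e^{-\pi\ii(\Delta_a+\frac12p(a))}\phi(a)$ and the fact that $\phi$ is a conjugate linear vertex algebra involution fixing $L$, the Hermitian invariance \eqref{eq:inv} becomes the usual bilinear invariance with $e^{zL_1}(-z^{-2})^{L_0}$, including the parity phases in the super case. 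Hermitian symmetry of $(\cdot,\cdot)$ then becomes a reality condition on the corresponding functional. Conversely, from a functional $F$ on $V_0$ killing $L_1V_1$ I would define the form explicitly by
\[
(a,u)=\Bigl\langle F,\ \sum_{t\ge0}\tfrac1{t!}\bigl(L_1^t g(a)\bigr)_{\Delta_a}u\Bigr\rangle ,
\]
which is the only candidate forced by \eqref{eq:modeinv} with $v=\one$. The real work, and the main obstacle, is checking that this expression is invariant and Hermitian. Invariance should follow from the Borcherds identity together with $F(L_1V_1)=0$, exactly as in Li's argument. Hermitian symmetry follows from the skew-symmetry $Y(a,z)b=e^{zT}Y(b,-z)a$ together with the condition $\langle F,g(v)\rangle=\overline{\langle F,v\rangle}$. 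Since $V_0=\C\one$, the space of such $F$ is either $0$ or one-dimensional, which gives the existence and uniqueness clause: a non-zero form exists iff $L_1V_1=0$, and it is pinned down by $(\one,\one)=1$.

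For (2), let $v$ lie in the kernel. For any $w\in V$, $a\in V$ and $n$, \eqref{eq:modeinv} gives
\[
(w,a_nv)=\Bigl(\sum_t\tfrac1{t!}\bigl(L_1^tg(a)\bigr)_{-n}w,\ v\Bigr)=0 .
\]
Hence $a_nv$ is again in the kernel, so the kernel is stable under all modes and is an ideal.

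For (3), let $a$ be quasiprimary, so $g(a)$ is quasiprimary and $g(g(a))=a$. Writing $a=a_{-\Delta_a}\one$ and applying \eqref{eq:qpinv} to the quasiprimary vector $g(a)$ with $n=\Delta_a$, I get
\[
(a,a)=\bigl(g(g(a))_{-\Delta_a}\one,\,a\bigr)=\bigl(\one,\,g(a)_{\Delta_a}a\bigr).
\]
The vector $g(a)_{\Delta_a}a$ lies in $V_0=\C\one$, so it equals $\langle g(a)_{\Delta_a}a\rangle\one$. With $(\one,\one)=1$ this yields
\[
(a,a)=\langle g(a)_{\Delta_a}a\rangle,\qquad g(a)_{\Delta_a}=g(a)_{(2\Delta_a-1)},
\]
which is exactly \eqref{eq:top}.
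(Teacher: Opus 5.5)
Your proposal is correct and follows the standard argument behind the quoted result. The paper does not reprove Theorem~\ref{thm:kmp1}; it cites \cite{KMP}, which establishes (1) along the lines of Li's argument: invariant forms correspond to module maps from $V$ to its conjugate contragredient $V^\dagger$, i.e.\ to vectors of $V_0^\dagger$ annihilated by the adjoint of $L_1$. Your transport $B(a,b)=(\phi(a),b)$ to the bilinear setting is a repackaging of the same idea, and your derivations of (2) and (3) from \eqref{eq:modeinv} and \eqref{eq:qpinv} are exactly the direct ones.

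The only step you leave as a sketch is the existence half of (1) in the $\frac12\Z$-graded super setting, where one must check that the phases in $g$ make the conjugate contragredient a module. There, Hermitian symmetry is most easily obtained from uniqueness rather than from skew-symmetry. The form $(a,b)\mapsto\ov{(b,a)}$ is again $\phi$-invariant, by the adjoint form of \eqref{eq:inv}, which follows from $A(z)A(z^{-1})=1$. It agrees with $(\cdot,\cdot)$ at $(\one,\one)$ as soon as $\langle F,\one\rangle$ is real.
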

A conformal vertex algebra $V$ with a normalized $\phi$-invariant Hermitian form   is called \emph{unitary} if it  is 
positive semi-definite (in \cite[Def.~4.8]{KMP} the form is required to be
positive definite; the two definitions agree if $V$ is simple). The radical of
the normalized   Hermitian form is the maximal proper ideal of $V$.
Therefore, if $V$ is unitary, the Hermitian form   is positive definite on the unique simple quotient of $V$.

\begin{proposition}[{\cite[Lem.~4.5 and (4.8)]{KMP}}]\label{prop:kmp2}
Let $V$ be a conformal vertex algebra with $L_1V_1=0$.  With respect to the
$sl_2$-action $f=L_{-1}$, $x=-L_0$, $e=-\frac12L_1$ one has
\[
V=\C\one\oplus\bigoplus_i M_i,
\]
where each $M_i$ is a Verma module over $sl_2$ whose highest weight vector $q_i$ is quasiprimary
of conformal weight $\Delta_i>0$.  Equivalently, writing
$Q_\Delta=\ker\bigl(L_1|_{V_\Delta}\bigr)$ for $\Delta>0$,
\begin{equation}\label{eq:sl2dec}
V=\C\one\oplus\bigoplus_{k\ge0}L_{-1}^kQ,\quad \text{where } Q=\bigoplus_{\Delta>0}Q_\Delta ,
\end{equation}
and $L_{-1}$ acts injectively on $\bigoplus_{\Delta>0}V_\Delta$.
\end{proposition}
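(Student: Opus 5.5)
The approach is to treat $V$ as a module over the $sl_2$-triple $f=L_{-1}$, $x=-L_0$, $e=-\frac12L_1$, and to prove by induction on the conformal weight $\Delta$ the graded statement
\[
V_\Delta=Q_\Delta\oplus L_{-1}V_{\Delta-1}\quad(\Delta>0),\qquad L_{-1}\ \text{injective on } V_\Delta \ (\Delta>0).
\]
Iterating this gives \eqref{eq:sl2dec}. Since weights only shift by integers, the chains $\Delta\in\Z_{\ge0}$ and $\Delta\in\frac12+\Z_{\ge0}$ can be handled separately; all spaces involved are finite dimensional because $\dim V_n<\infty$.

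\emph{Step 1: the basic computation.} From the Virasoro relations $[L_m,L_n]=(m-n)L_{m+n}$ for $m,n\in\{-1,0,1\}$, I would derive
\[
L_1L_{-1}^{k}q=k(2\Delta+k-1)L_{-1}^{k-1}q
\]
for a quasiprimary $q\in Q_\Delta$, by induction on $k$. If $\Delta>0$ and $k\ge1$, the scalar $k(2\Delta+k-1)$ is strictly positive. Two consequences follow. First, all $L_{-1}^kq$ are non-zero, so $q$ generates an $sl_2$-Verma module $M_q$ with highest weight $-\Delta$. Second, $L_1L_{-1}$ acts on $L_{-1}^kq$ by the non-zero scalar $(k+1)(2\Delta+k)$.

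\emph{Step 2: the base cases.} Two facts are needed.
\begin{itemize}
\item $L_{-1}\one=T\one=0$, which is a vertex algebra axiom. Hence $L_{-1}V_0=0$.
\item The hypothesis $L_1V_1=0$ gives $V_1=Q_1$ directly.
\end{itemize}
In the half-integer chain, $V_{-1/2}=0$, so $L_1V_{1/2}=0$ and $V_{1/2}=Q_{1/2}$. These are the only places where the hypothesis enters. It is exactly what is needed: without it, $V_1$ could contain a vector mapped by $L_1$ onto $\one$, which would spoil the decomposition.

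\emph{Step 3: the induction.} Let $\Delta>1$, or $\Delta>\frac12$ in the half-integer chain, and assume $V_{\Delta-1}=\bigoplus_{j\ge0}L_{-1}^jQ_{\Delta-1-j}$ with only positive weights occurring. By Step 1, $L_1L_{-1}$ acts on $V_{\Delta-1}$ diagonalizably with all eigenvalues non-zero, so it is invertible there. This yields two conclusions.
\begin{itemize}
\item $L_{-1}\colon V_{\Delta-1}\to V_\Delta$ is injective.
\item $L_1\colon V_\Delta\to V_{\Delta-1}$ is surjective, and it restricts to an isomorphism $L_{-1}V_{\Delta-1}\to V_{\Delta-1}$.
\end{itemize}
Therefore $\ker L_1\cap L_{-1}V_{\Delta-1}=0$. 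Since $P=L_{-1}(L_1L_{-1})^{-1}L_1$ is an idempotent on $V_\Delta$ with image $L_{-1}V_{\Delta-1}$ and kernel $Q_\Delta$, we get $V_\Delta=Q_\Delta\oplus L_{-1}V_{\Delta-1}$. Unwinding the induction gives the graded decomposition, and the injectivity of $L_{-1}$ on positive weights.

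The sum $\bigoplus_i M_i$ over a basis $q_i$ of $Q$ is direct by this weight-by-weight argument, and each $M_i$ is Verma by Step 1. I expect no serious obstacle. The only point needing care is the base of the induction, that is, correctly using $T\one=0$ and $L_1V_1=0$ so that the weight-$0$ space does not interfere.
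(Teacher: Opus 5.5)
Your proof is correct and complete. The paper gives no proof of this proposition: it quotes it from \cite[Lem.~4.5 and (4.8)]{KMP}. The only computation the paper later takes from it is the identity $L_1L_{-1}^kq=k(2\Delta+k-1)L_{-1}^{k-1}q$, used in the proof of Theorem~\ref{thm:main}, and that is exactly your Step~1.

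Your induction on conformal weight is a self-contained, elementary replacement for the citation. Its key device is the explicit idempotent $P=L_{-1}(L_1L_{-1})^{-1}L_1$, which splits $V_\Delta=Q_\Delta\oplus L_{-1}V_{\Delta-1}$. A more structural route would treat $V$ as an $sl_2$-module in category $\mathcal O$. Since all weights of $x=-L_0$ are $\le 0$, linkage allows only one non-trivial block: $\C\one$ together with the Verma module at conformal weight $1$. Given $L_{-1}\one=0$, the only possible obstruction to a splitting into $\C\one$ plus Verma modules is then a vector of $V_1$ sent by $L_1$ to a non-zero multiple of $\one$, which $L_1V_1=0$ excludes. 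Your version avoids this block and extension bookkeeping and shows exactly where $T\one=0$ and $L_1V_1=0$ are used.

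One small refinement: injectivity of $L_{-1}$ on $\bigoplus_{\Delta>0}V_\Delta$ holds even without $L_1V_1=0$. If $Ta=0$, then $Y(a,z)=a_{(-1)}$ is a constant field, hence central by locality, so $L_0a=a_{(-1)}L_0\one=0$. The hypothesis is needed only for the splitting.
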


\subsection{Positive energy modules} Recall that 
a $\gr$-twisted $V$-module $M$ is a \emph{positive energy} module if
it has an $\R$-grading $M=\bigoplus_{j\ge0}M_j$ with $a_nM_j\subseteq M_{j-n}$; then
\begin{equation}\label{eq:pos}
a_nM_0=0\quad (n>0),\qquad a_0M_0\subseteq M_0
\end{equation}
\cite[Def.~2.6, (2.16)]{KMP}.  
A positive energy $\gr$-twisted $V$-module is called \emph{unitary} (with respect
to $\phi$) if it carries a non-zero positive semi-definite $\phi$-invariant Hermitian form.

\begingroup
\subsection{Heisenberg vertex algebras and screening operators}\label{sub:screen}
Let $\h$ be a finite-dimensional vector space with a non-degenerate symmetric
bilinear form $(\cdot|\cdot)$, let $\nu:\h^*\to\h$ be the induced isomorphism,
and let $t\in\C^\times$. We denote by $V^t(\h)$ the Heisenberg vertex algebra
generated by $\h$ with $\l$-bracket $[u_\l v]=\l\,t(u|v)$. For $u\in\h$ we write
$Y(u,z)=u(z)=\sum_{n\in\Z}u_{(n)}z^{-n-1}$, so that
$[u_{(m)},v_{(n)}]=m\,t(u|v)\delta_{m,-n}I$, $u,v\in\h$ (the elements of $\h$ have
conformal weight $1$ for all the conformal vectors considered below, hence
$u_{(n)}=u_n$ in the notation introduced at the beginning of this section). Let $F$ be a conformal vertex algebra with
conformal vector $L^F$, and let $\xi\in\h$. Then
\begin{equation}\label{Lxi}
L_\xi=\frac1{2t}\sum_j:u^ju_j:+\frac1t\,T\xi+L^F,
\end{equation}
where $\{u^j\}$, $\{u_j\}$ are dual bases of $\h$, is a conformal vector of
$V^t(\h)\otimes F$. We write $Y(L_\xi,z)=\sum_{m\in\Z}L_mz^{-m-2}$; explicitly
$L_m=L^{Sug}_m-\frac{m+1}t\,\xi_{(m)}+L^F_m$, where $L^{Sug}$ is the first summand of
\eqref{Lxi}. The free field realizations \eqref{FFR4} and \eqref{s:FFR} of the
principal $W$-algebras are of this form, with $\xi=\tilde\rho$, the vector
defined in \eqref{trho} below (and at the beginning of Section \ref{sec:super}
in the super case).

 For $\mu\in\h^*$ let $M(\mu)$
be the irreducible module of $V^t(\h)$ generated by a vector $|\mu\rangle$ such
that $u_{(n)}|\mu\rangle=0$ for $n>0$ and $u_{(0)}|\mu\rangle=\mu(u)|\mu\rangle$; thus
$M(0)=V^t(\h)$.
 We call $\mu$ the \emph{momentum} of
$M(\mu)$. Let
$s_\mu:M(0)\to M(\mu)$ be the linear map which commutes with the $u_{(n)}$, $n\ne0$,
and maps $\one$ to $|\mu\rangle$, and set
\begin{equation}\label{s:vo}
e^\mu(z)=s_\mu\exp\Bigl(-\sum_{n<0}\frac{\nu(\mu)_{(n)}}{nt}z^{-n}\Bigr)
\exp\Bigl(-\sum_{n>0}\frac{\nu(\mu)_{(n)}}{nt}z^{-n}\Bigr):\ M(0)\to M(\mu)((z)).
\end{equation}
Then $[u_{(n)},e^\mu(z)]=\mu(u)z^ne^\mu(z)$ for all $n\in\Z$, i.e.\
$u(z)e^\mu(w)\sim\frac{\mu(u)}{z-w}e^\mu(w)$, and
$e^\mu(z)\one=|\mu\rangle+O(z)$; these properties characterize $e^\mu(z)$. It is
the intertwining operator of type $\binom{M(\mu)}{M(\mu)\ M(0)}$ attached to
$|\mu\rangle$. More generally, for $\Phi\in F$ we consider the field
\begin{equation}\label{s:Sfield}
S(z)=e^\mu(z)\otimes Y(\Phi,z)=\sum_{m\in\Z}S_{(m)}z^{-m-1}:\ 
V^t(\h)\otimes F\to(M(\mu)\otimes F)((z)),
\end{equation}
and we say that it is a \emph{field of momentum
$\mu$}. The operator $L_0$ acts on $|\mu\rangle$ by the scalar
\begin{equation}\label{s:Deltaxi}
\Delta_\xi(\mu)=\frac{(\mu|\mu)}{2t}-\frac{\mu(\xi)}{t}.
\end{equation}

\begin{lemma}\label{lem:scr}
Let $\mu\in\h^*$ and let $\Phi\in F$ be a primary vector of conformal weight
$\Delta_\Phi$ for $L^F$, with $\mu\ne0$ if $\Phi\in\C\one$. Let $S(z)$ be the
field \eqref{s:Sfield}, and put
$$
\Delta_S=\Delta_\xi(\mu)+\Delta_\Phi,\qquad
\mathcal Q_S=\operatorname{Res}_zS(z)=S_{(0)}:\ V^t(\h)\otimes F\to M(\mu)\otimes F.
$$
Then $S(z)$ is a primary field of conformal weight $\Delta_S$ for $L_\xi$, and
\begin{equation}\label{s:LQ}
[L_m,\mathcal Q_S]=(m+1)(\Delta_S-1)\,S_{(m)}\quad(m\in\Z),\qquad
\mathcal Q_S(L_\xi)=(\Delta_S-1)\,L_{-1}\bigl(|\mu\rangle\otimes\Phi\bigr),
\end{equation}
and $L_{-1}(|\mu\rangle\otimes\Phi)\ne0$. In particular the following are equivalent:

 {\rm(i)} $\mathcal Q_S$ commutes
with $L_m$ for all $m$; 

{\rm(ii)} $\mathcal Q_S(L_\xi)=0$; 

{\rm(iii)}
$\Delta_S=1$.
\end{lemma}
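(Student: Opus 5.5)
The plan is to compute the commutator $[L_m,S(w)]$ directly from the OPE of $L_\xi$ with the field $S(w)=e^\mu(w)\otimes Y(\Phi,w)$ and then take residues. The heart of the matter is primarity. The Heisenberg part $L^{Sug}-\frac{m+1}{t}\xi_{(m)}$ acts only on the $e^\mu$ factor, and $L^F$ acts only on the $Y(\Phi,\cdot)$ factor. For the Heisenberg part I would use the characterizing property $[u_{(n)},e^\mu(z)]=\mu(u)z^ne^\mu(z)$. Together with the normally ordered expression for $L^{Sug}_m=\frac1{2t}\sum_j\sum_n :u^j_{(n)}u_{j,(m-n)}:$, this gives
\[
[L^{Sug}_m,e^\mu(z)]=z^{m+1}\partial_z e^\mu(z)+\frac{(\mu|\mu)}{2t}(m+1)z^m e^\mu(z),
\]
the standard Heisenberg vertex-operator computation. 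The $\partial_z$ term is identified by comparing with $L^{Sug}_{-1}$, using the definition of $e^\mu(z)$ through $s_\mu$ and the exponentials.

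The $\xi$ term contributes $-\frac{m+1}{t}\mu(\xi)z^me^\mu(z)$. Hence $e^\mu(z)$ is primary of weight $\Delta_\xi(\mu)$, and since $\Phi$ is $L^F$-primary of weight $\Delta_\Phi$, we get
\[
[L_m,S(z)]=z^{m+1}\partial_zS(z)+(m+1)\Delta_S z^mS(z).
\]

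Next I would take the residue. Writing $S(z)=\sum S_{(n)}z^{-n-1}$, the coefficient of $z^{-1}$ gives
\[
[L_m,S_{(0)}]=\operatorname{Res}\bigl(z^{m+1}\partial_zS\bigr)+(m+1)\Delta_S S_{(m)}=-(m+1)S_{(m)}+(m+1)\Delta_SS_{(m)},
\]
after integrating by parts. This is the first formula in \eqref{s:LQ}.

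For the second formula, I would apply the commutator with $m=-2$ to $\one$, using $L_\xi=L_{-2}\one$ and $\mathcal Q_S\one=0$. The latter holds because $S(z)\one=e^\mu(z)\one\otimes Y(\Phi,z)\one$ is a power series in $z$, so its residue vanishes. This gives
\[
\mathcal Q_S(L_\xi)=-[L_{-2},\mathcal Q_S]\one=(\Delta_S-1)S_{(-2)}\one .
\]
Since $S(z)\one=e^{zL_{-1}}(|\mu\rangle\otimes\Phi)$ by translation covariance (itself the $m=-1$ case of the commutator together with the value at $z=0$), we have $S_{(-2)}\one=L_{-1}(|\mu\rangle\otimes\Phi)$.

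It remains to show $L_{-1}(|\mu\rangle\otimes\Phi)\neq0$. If $\mu\ne0$, pick $u$ with $\mu(u)\ne0$. The component of $L_{-1}(|\mu\rangle\otimes\Phi)$ containing $u_{(-1)}|\mu\rangle\otimes\Phi$ is nonzero, because $L^{Sug}_{-1}|\mu\rangle=\frac1t\nu(\mu)_{(-1)}|\mu\rangle$ and the $\xi_{(-1)}$ term has coefficient $0$. Here I use that $M(\mu)\otimes F$ is a tensor product with $M(\mu)$ free over the negative modes. If $\mu=0$, then $\Phi\notin\C\one$ has positive weight, and $L^F_{-1}$ is injective there by Proposition \ref{prop:kmp2}; I would use this, or simply that $T\Phi\ne0$ for a nonvacuum vector in a conformal vertex algebra with $V_0=\C\one$.

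The equivalences then follow. (iii)$\Rightarrow$(i) is immediate from \eqref{s:LQ}, and (i)$\Rightarrow$(ii) holds because $L_\xi=L_{-2}\one$ and $\mathcal Q_S\one=0$. Finally (ii)$\Rightarrow$(iii) holds because $L_{-1}(|\mu\rangle\otimes\Phi)\neq0$.

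I expect the main obstacle to be the careful Heisenberg computation of $[L^{Sug}_m,e^\mu(z)]$. Getting the $\frac{(\mu|\mu)}{2t}$ coefficient and the $\partial_z$ term right needs the normal ordering on the $m=0$ mode and the action on $s_\mu$. I would handle this by checking the $m=-1$ and $m=0$ cases against the known $L_0$-eigenvalue \eqref{s:Deltaxi}, and then arguing for general $m$ via the $[u_{(n)},e^\mu]$ relations.
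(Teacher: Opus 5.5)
Your proposal is correct. From the residue step on it is the paper's argument: the formula for $[L_m,\mathcal Q_S]$, $\mathcal Q_S\one=0$, $S(z)\one=e^{zL_{-1}}(|\mu\rangle\otimes\Phi)$, the non-vanishing of $L_{-1}(|\mu\rangle\otimes\Phi)$, and the chain of equivalences.

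Where you differ is in how you prove that $S(z)$ is primary. The paper never computes $[L^{Sug}_m,e^\mu(z)]$. It only checks that $v=|\mu\rangle\otimes\Phi$ is a primary vector, i.e.\ $L_mv=0$ for $m>0$ and $L_0v=\Delta_Sv$; this uses the Heisenberg modes only on the highest weight vector. It then regards $S(z)=\mathcal Y(v,z)$ as an intertwining operator and reads off $[L_m,S(z)]=\sum_{j\ge0}\binom{m+1}{j}z^{m+1-j}\mathcal Y(L_{j-1}v,z)$ from the Jacobi identity. It notes that $T\xi$ does not contribute to $L_{-1}$, so the $L_{-1}$-derivative property applies.

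You instead do the free-field computation directly from $[u_{(n)},e^\mu(z)]=\mu(u)z^ne^\mu(z)$ and the explicit formula \eqref{s:vo}. This is self-contained and avoids the intertwining-operator formalism, at the price of the normal-ordering bookkeeping you anticipated. That bookkeeping goes through for all $m$. Reordering the modes produces exactly the term $\frac{(\mu|\mu)}{2t}(m+1)z^me^\mu(z)$. The identity $\partial_ze^\mu(z)=\frac1t:\nu(\mu)(z)e^\mu(z):$ holds provided the zero mode is placed on the right, where it kills $M(0)$; this is also why \eqref{s:vo} carries no factor $z^{\nu(\mu)_{(0)}/t}$.

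One small correction: Proposition \ref{prop:kmp2} assumes $L_1V_1=0$, which is not among the hypotheses on $F$. Your alternative argument (that $T\Phi\ne0$ for a non-vacuum homogeneous vector when $F_0=\C\one$) is fine. More directly, since $\Phi$ is primary of weight $\Delta_\Phi>0$, one has $L^F_1L^F_{-1}\Phi=2\Delta_\Phi\Phi\ne0$.
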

\begin{proof}
Put $v=|\mu\rangle\otimes\Phi$. For $m>0$ each term of $L^{Sug}_m|\mu\rangle$
contains a mode $u_{(n)}$ with $n>0$, and $\xi_{(m)}|\mu\rangle=0$, while
$L^{Sug}_0|\mu\rangle=\frac1{2t}\sum_j\mu(u^j)\mu(u_j)|\mu\rangle$ and
$\xi_{(0)}|\mu\rangle=\mu(\xi)|\mu\rangle$. Since $\Phi$ is primary, we get
$L_mv=0$ for $m>0$ and $L_0v=\Delta_Sv$. Now $S(z)=\mathcal Y(v,z)$ for an
intertwining operator $\mathcal Y$ of $V^t(\h)\otimes F$-modules, so that
\begin{align*}
S(z)\one&=e^{zL_{-1}}v,\\
[L_m,S(z)]&=\sum_{j\ge0}\binom{m+1}{j}z^{m+1-j}\mathcal Y(L_{j-1}v,z)
=z^{m+1}\partial_zS(z)+(m+1)\Delta_Sz^mS(z);
\end{align*}
here we used that $T\xi$ does not contribute to $L_{-1}$, so that $L_{-1}$ is the
translation operator. This says that $S(z)$ is primary of conformal weight
$\Delta_S$, and taking residues gives the first formula in \eqref{s:LQ}. Moreover
$S_{(m)}\one=0$ for $m\ge0$ and $S_{(-2)}\one=L_{-1}v$, hence
$\mathcal Q_S(L_\xi)=\mathcal Q_SL_{-2}\one=-[L_{-2},\mathcal
Q_S]\one=(\Delta_S-1)L_{-1}v$. Finally
$L_{-1}v=\frac1t\nu(\mu)_{(-1)}|\mu\rangle\otimes\Phi+|\mu\rangle\otimes T\Phi\ne0$,
because either $\mu\ne0$ or $T\Phi\ne0$. The equivalence of (i)--(iii) follows.
\end{proof}

\begin{definition}\label{def:scr}
If $\Delta_S=1$ we say that $S(z)$ is a \emph{screening current} and that
$\mathcal Q_S$ is a \emph{screening operator} for $L_\xi$.
\end{definition}

Since $\mathcal Q_S$ is the residue of an intertwining operator, it satisfies
$\mathcal Q_S(a_{(n)}b)=(\mathcal Q_Sa)_{(n)}b+p(S,a)\,a_{(n)}\mathcal Q_Sb$,
where the $n$-th products on the right hand side are those defining the module
structure of $M(\mu)\otimes F$. Hence $\operatorname{Ker}\mathcal Q_S$
is a vertex subalgebra of $V^t(\h)\otimes F$, and it contains $L_\xi$ if and only
if $\mathcal Q_S$ is a screening operator for $L_\xi$.

 We shall need the behaviour of momenta under two
kinds of maps.

(1) Let $t'\in\C^\times$ and $a\in\C^\times$ with $a^2t=t'$. Then $u\mapsto au$,
$u\in\h$, extends to an isomorphism of vertex algebras
$\theta:V^{t'}(\h)\otimes F\to V^t(\h)\otimes F$ which is the identity on $F$. If
$E(w)$ is a field of momentum $\mu$ of $V^{t'}(\h)$, then
$(au)(z)\,\theta E(w)\theta^{-1}\sim\frac{\mu(u)}{z-w}\theta E(w)\theta^{-1}$.
Hence $\theta$ carries fields of momentum $\mu$ at level $t'$ to fields of
momentum $\mu/a$ at level $t$; in particular it conjugates $e^\mu(z)$ at level
$t'$ to a multiple of $e^{\mu/a}(z)$ at level $t$.

(2) Let $\si_\h$ be a conjugate linear bijection of $\h$ such that
$t(\si_\h u|\si_\h v)=\ov{t(u|v)}$ for $u,v\in\h$. Then $\si_\h$ extends to a
conjugate linear automorphism $\si$ of $V^t(\h)$, which induces conjugate linear
isomorphisms $M(\mu)\to M(\si\cdot\mu)$, where
$(\si\cdot\mu)(u)=\ov{\mu(\si_\h^{-1}u)}$, and conjugates $e^\mu(z)$ to a
multiple of $e^{\si\cdot\mu}(z)$. Since
$t(\si_\h\nu(\mu)|u)=\ov{t(\nu(\mu)|\si_\h^{-1}u)}=\ov t\,(\si\cdot\mu)(u)$, we
have, identifying $\h^*$ with $\h$ via $\nu$,
\begin{equation}\label{s:momentum}
\si\cdot\mu=t/\ov t\ \si_\h(\mu).
\end{equation}
\endgroup

\section{Proof of Theorem \ref{11}}
Recall that $\Gamma=\frac{1}{2}\Z$.
\begin{lemma}\label{lem:key}
Let $V$ be a conformal vertex algebra with $L_1V_1=0$, let $\phi$ be a conjugate
linear involution, and let $\gr$ be a $\Gamma/\Z$-grading compatible with $\phi$.
Assume that some positive energy $\gr$-twisted module $M$ carries a non-zero positive
semi-definite $\phi$-invariant Hermitian form $(\cdot,\cdot)_M$.
Let $(\cdot,\cdot)$ be the normalized $\phi$-invariant Hermitian form of $V$,
which exists by Theorem \ref{thm:kmp1}(1). Then
\[
(a,a)\ \ge\ 0\qquad\text{for every quasiprimary }a\in V .
\]
\end{lemma}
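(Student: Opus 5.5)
The plan is to detect the sign of $(a,a)$ through the asymptotic behaviour of the norms $(a^M_{-N}w,a^M_{-N}w)_M$ as $N\to\infty$. By \eqref{eq:top} the leading term of the relevant commutator on $M$ is $(a,a)$ times the identity, multiplied by a positive binomial coefficient.

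\textbf{Setup.} We may assume $a\neq0$ is homogeneous, quasiprimary, of conformal weight $\Delta=\Delta_a>0$ (for $\Delta=0$ we have $a\in\C\one$ and the claim is trivial), and lies in $V^{[\gamma]}$ for some $\gamma$. Since $(\cdot,\cdot)_M$ is non-zero and positive semi-definite, there is $w\in M$ with $(w,w)_M>0$. Write $w=\sum_j w_j$ as a finite sum with $w_j\in M_j$, and let $J$ be the largest degree occurring. We will let $N\to+\infty$ through $N\in -\gamma+\Delta+\Z$, so that $a^M_{-N}$ is a legitimate mode on the twisted module, and we take $N>J$.

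\textbf{Rewriting the norm as a commutator.} Applying \eqref{eq:qpinv} to the quasiprimary vector $g(a)$ and using $g^2=I$, we get $(a^M_{-N}u,v)_M=(u,g(a)^M_Nv)_M$. Hence
\[
0\le (a^M_{-N}w,a^M_{-N}w)_M=(w,g(a)^M_Na^M_{-N}w)_M .
\]
Since $g(a)^M_N$ lowers degree by $N>J$, it kills $w$. Therefore $g(a)^M_Na^M_{-N}w=[g(a)^M_N,a^M_{-N}]w$, with the supercommutator; the parity sign disappears because the extra term vanishes. By the commutator formula \eqref{eq:comm}, which holds on twisted modules,
\[
[g(a)^M_N,a^M_{-N}]=\sum_{j=0}^{2\Delta-1}\binom{N+\Delta-1}{j}\bigl(g(a)_{(j)}a\bigr)^M_{0}.
\]
The sum stops at $j=2\Delta-1$ for degree reasons: $g(a)_{(j)}a$ has weight $2\Delta-j-1$, which is negative for $j>2\Delta-1$. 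By \eqref{eq:top}, the term $j=2\Delta-1$ equals $\binom{N+\Delta-1}{2\Delta-1}(a,a)\,I_M$.

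\textbf{Taking the limit.} The expression
\[
(w,[g(a)^M_N,a^M_{-N}]w)_M=\sum_{j}\binom{N+\Delta-1}{j}\bigl(w,(g(a)_{(j)}a)^M_0w\bigr)_M
\]
is a polynomial in $N$. Its coefficients $(w,(g(a)_{(j)}a)^M_0w)_M$ do not depend on $N$, and its term of top degree $2\Delta-1$ is $\frac{N^{2\Delta-1}}{(2\Delta-1)!}(a,a)(w,w)_M$. This polynomial is $\ge0$ for all admissible large $N$, which form an unbounded arithmetic progression. Dividing by $N^{2\Delta-1}$ and letting $N\to\infty$ gives $(a,a)(w,w)_M\ge0$, hence $(a,a)\ge0$. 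Note that \eqref{eq:top} together with the Hermitian property already shows that $(a,a)$ is real.

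\textbf{Main obstacle.} I expect the main difficulty to be bookkeeping rather than conceptual. One has to check that \eqref{eq:qpinv} applies on the twisted module with the correct index cosets, including the fact that $g$ preserves $V^{[\gamma]}$ by compatibility. One must also track the parity signs and the exact shape of the binomial in \eqref{eq:comm} for half-integer $\Delta$.

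The key trick is the choice of $w$. Taking $w$ merely to be a vector of positive norm, rather than a lowest-energy vector, avoids any need to know that the positive energy grading is orthogonal for the form. The only requirement is that $N$ exceed every degree appearing in $w$.
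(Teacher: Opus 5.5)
Your asymptotic argument is the paper's, and it is correct for a quasiprimary $a$ lying in a single $V^{[\gamma]}$. Taking an arbitrary $w$ of positive norm, with $N$ larger than all its degrees, is a harmless variant of the paper's homogeneous $m$. The gap is the opening reduction ``we may assume $a\in V^{[\gamma]}$''.

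With the definition used here (a vertex algebra grading preserved by $L_0$), $L$ need not lie in $V^{[0]}$. So $L_1$ need not preserve the grading, and the components $a^{[\gamma]}$ of a quasiprimary $a$ need not be quasiprimary; the paper's proof says so explicitly. This really occurs under the hypotheses of the lemma. Take the universal enveloping vertex algebra of the Virasoro algebra extended by a weight-one primary $w$ with $[w_\lambda w]=0$. Then $w_{(0)}=0$, $V_1=\C w$ and $L_1V_1=0$. The map $L\mapsto L-Tw$, $w\mapsto -w$ is an involutive automorphism commuting with $L_0$. The quasiprimary $L$ splits as $(L-\frac12 Tw)+\frac12 Tw$, with $L_1(\frac12 Tw)=w\neq 0$.

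For such components you cannot invoke \eqref{eq:qpinv} or \eqref{eq:top}. If instead you keep $a$ mixed and read $a^M_{-N}$ as a coefficient of the full field, then for $N$ in a given coset the top coefficient of your polynomial is $c_\gamma$, where $g(a)^{[\gamma]}_{(2\Delta-1)}a^{[\gamma]}=c_\gamma\one$. It is not $(a,a)$, so your claim that the $j=2\Delta-1$ term equals $\binom{N+\Delta-1}{2\Delta-1}(a,a)I_M$ fails. Even when the components happen to be quasiprimary, you still have to show that they are orthogonal. The same applies to the even and odd parts of $a$, which you also silently take to be homogeneous.

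The missing step is the paper's componentwise treatment. Put $b=g(a)$ and split $a=a^{[0]}+a^{[1/2]}$, $b=b^{[0]}+b^{[1/2]}$; by compatibility, $b^{[\gamma]}=g(a^{[\gamma]})$. In the invariance identity $(v,Y^M(a,z)u)_M=z^{-2\Delta}(Y^M(b,z^{-1})v,u)_M$ for the quasiprimary $a$, the exponents of $z$ coming from the $\gamma$-components lie in $\gamma+\Z$ on both sides, because $2\Delta\in\Z$. The identity therefore splits and gives $(v,a^{[\gamma]}_nu)_M=(b^{[\gamma]}_{-n}v,u)_M$, with no quasiprimarity needed for the components. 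Your limit argument then runs verbatim for each $\gamma$ and yields $c_\gamma\ge 0$. Finally, $(a,a)\one=b_{(2\Delta-1)}a=\sum_{\gamma,\gamma'}b^{[\gamma]}_{(2\Delta-1)}a^{[\gamma']}$, and the cross terms lie in $V_0\cap V^{[1/2]}=0$. Hence $(a,a)=c_0+c_{1/2}\ge 0$.
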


\begin{proof}
Let $a$ be quasiprimary of conformal weight $\Delta$. We may assume that $a$ is
parity homogeneous: indeed $L$ is even, so that the even and the odd component of
$a$ are quasiprimary, and they are orthogonal to each other, because by
\eqref{eq:modeinv} their scalar product is of the form $(w,\one)$ with $w$ an odd
vector of $V_0=\C\one$, i.e.\ $w=0$.
If $\Delta=0$ then $a\in\C\one$ and there is nothing to prove, so let
$\Delta>0$ and put $b:=g(a)$, which is quasiprimary of the same conformal
weight and parity as $a$.

Write $a=a^{[0]}+a^{[1/2]}$ and $b=b^{[0]}+b^{[1/2]}$ with
$a^{[\gamma]},b^{[\gamma]}\in V^{[\gamma]}$. Since $L_0$ preserves the grading,
$a^{[\gamma]}$ and $b^{[\gamma]}$ have conformal weight $\Delta$ (but they need not
be quasiprimary), and $b^{[\gamma]}=g(a^{[\gamma]})$, since $\gr$ is compatible
with $\phi$. The modes of $a^{[\gamma]}$ and $b^{[\gamma]}$ on $M$ are indexed by
the class $[\gamma-\Delta]=\gamma-\Delta+\Z\in\Gamma/\Z$, and
$[\gamma-\Delta]=-[\gamma-\Delta]$ because $2(\gamma-\Delta)\in\Z$. As $a$ is
quasiprimary, the invariance of $(\cdot,\cdot)_M$ reads
$(v,Y^M(a,z)u)_M=z^{-2\Delta}(Y^M(b,z^{-1})v,u)_M$. Expanding both sides, the
summand with index $\gamma$ contains only powers $z^{e}$ with $e\in[\gamma]$: on
the left the exponents are $-n-\Delta$ with $n\in[\gamma-\Delta]$, on the right
they are $m-\Delta$ with $m\in[\gamma-\Delta]$, and $2\gamma,2\Delta\in\Z$. Hence
the identity holds for each $\gamma$ separately, and comparing the coefficients of
$z^{-n-\Delta}$, $n\in[\gamma-\Delta]$, we obtain
\begin{equation}\tag{$\ast$}\label{eq:star}
(v,a^{[\gamma]}_nu)_M=(b^{[\gamma]}_{-n}v,u)_M,\qquad u,v\in M,\ n\in[\gamma-\Delta] .
\end{equation}

Since $(\cdot,\cdot)_M$ is positive semi-definite and non-zero, there is a
homogeneous vector $m\in M_{j_0}$ such that $\|m\|_M^2>0$: otherwise, by the
Cauchy--Schwarz inequality, all the homogeneous vectors, hence all the vectors of
$M$, would lie in the kernel of the form. Fix $\gamma$, and let $n\in[\gamma-\Delta]$,
$n>j_0$. By \eqref{eq:star} applied with $v=a^{[\gamma]}_{-n}m$, $u=m$,
\[
\|a^{[\gamma]}_{-n}m\|_M^2=(b^{[\gamma]}_{n}a^{[\gamma]}_{-n}m,\,m)_M .
\]
Since $b^{[\gamma]}_nm\in M_{j_0-n}=0$, we have
$b^{[\gamma]}_na^{[\gamma]}_{-n}m=[b^{[\gamma]}_n,a^{[\gamma]}_{-n}]m$, and
\eqref{eq:comm} gives
\[
[b^{[\gamma]}_n,a^{[\gamma]}_{-n}]=\sum_{j\in\Zp}\binom{n+\Delta-1}{j}\,(b^{[\gamma]}_{(j)}a^{[\gamma]})_{0}.
\]
Now $b^{[\gamma]}_{(j)}a^{[\gamma]}$ has conformal weight $2\Delta-1-j\in\Z$,
so that its modes on $M$ are indexed by $\Z$, and $V_s=0$ for $s<0$; therefore the sum truncates at $j=2\Delta-1$, and
$b^{[\gamma]}_{(2\Delta-1)}a^{[\gamma]}=c_\gamma\one$ for some $c_\gamma\in\C$. Hence
\begin{equation}\label{eq:poly}
0\le\|a^{[\gamma]}_{-n}m\|_M^2=\sum_{j=0}^{2\Delta-1}\binom{n+\Delta-1}{j}\,
\bigl((b^{[\gamma]}_{(j)}a^{[\gamma]})_0\,m,\,m\bigr)_M=:P_\gamma(n).
\end{equation}
The right hand side is a polynomial in $n$ of degree at most $2\Delta-1$, because
$\binom{n+\Delta-1}{j}$ has degree $j$, and the coefficient of
$\binom{n+\Delta-1}{2\Delta-1}$ is $(c_\gamma m,m)_M=\ov{c_\gamma}\,\|m\|_M^2$. Since
$P_\gamma(n)$ is real and non-negative for all $n\in[\gamma-\Delta]$, $n>j_0$, this
coefficient is real and
\[
\ov{c_\gamma}\,\|m\|_M^2=\lim_{n\to\infty}\frac{P_\gamma(n)}{\binom{n+\Delta-1}{2\Delta-1}}\ \ge\ 0 ,
\]
whence $c_\gamma\ge0$, because $\|m\|_M^2>0$.

Finally, by \eqref{eq:top},
$(a,a)\one=b_{(2\Delta-1)}a=\sum_{\gamma,\gamma'}b^{[\gamma]}_{(2\Delta-1)}a^{[\gamma']}$.
The terms with $\gamma\ne\gamma'$ lie in $V_0\cap V^{[1/2]}=0$, so that
$(a,a)=c_0+c_{1/2}\ge0$.
\end{proof}

The following is a slightly more general  statement of Theorem \ref{11}.

\begin{theorem}\label{thm:main}
Let $V$ be a  conformal vertex algebra with $L_1V_1=0$, let $\phi$ be a
conjugate linear involution of $V$ and let $\gr$ be a $\Gamma/\Z$-grading compatible
with $\phi$.  If some positive energy $\gr$-twisted $V$-module carries a
non-zero positive semi-definite $\phi$-invariant Hermitian form, then the normalized
$\phi$-invariant Hermitian form $(\cdot,\cdot)$ of $V$ is positive semi-definite; that is,
$V$ is unitary.
\end{theorem}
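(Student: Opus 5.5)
The plan is to reduce positivity of the whole form to positivity on quasiprimary vectors, which Lemma \ref{lem:key} already supplies, using the $sl_2$-decomposition of Proposition \ref{prop:kmp2}. By that proposition,
\[
V=\C\one\oplus\bigoplus_{k\ge0}L_{-1}^kQ,\qquad Q=\bigoplus_{\Delta>0}Q_\Delta .
\]
First we check that this decomposition is orthogonal for $(\cdot,\cdot)$. Distinct conformal weights are orthogonal, since $L_0$ is self-adjoint: $g(L)=L$ and $L$ is quasiprimary of weight $2$, so \eqref{eq:qpinv} gives $(v,L_nu)=(L_{-n}v,u)$. The same identity gives $L_{-1}^*=L_1$. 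The vector $\one$ is orthogonal to every $L_{-1}^kq$ of positive weight, again by weight.

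Next we reduce to quasiprimaries. For $q\in Q_\Delta$, $q'\in Q_{\Delta'}$ and $k,l\ge0$, move the $L_{-1}$'s across using $L_{-1}^*=L_1$ together with the $sl_2$ relations $[L_1,L_{-1}]=2L_0$ and $L_1q=0$. This gives
\[
(L_{-1}^kq,L_{-1}^lq')=\delta_{kl}\,k!\prod_{i=0}^{k-1}(2\Delta+i)\,(q,q')
\]
when $\Delta=\Delta'$, and $0$ otherwise by weight. The coefficient is positive because $\Delta>0$. For a general $v=c\one+\sum_k L_{-1}^kq_k$ with $q_k\in Q$ (each $q_k$ possibly a sum over weights), we therefore get
\[
(v,v)=|c|^2+\sum_k \sum_\Delta C_{k,\Delta}\,(q_{k,\Delta},q_{k,\Delta}),\qquad C_{k,\Delta}>0,
\]
where $q_{k,\Delta}$ is the weight-$\Delta$ component of $q_k$. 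Each $q_{k,\Delta}$ is quasiprimary, so Lemma \ref{lem:key} (applied with the given twisted module) gives $(q_{k,\Delta},q_{k,\Delta})\ge0$. Hence $(v,v)\ge0$, and the form is positive semi-definite.

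The main point to verify carefully is the adjointness $L_n^*=L_{-n}$ with the sign conventions of $g$: one needs $g(L)=L$ exactly, and \eqref{eq:qpinv} applied to the quasiprimary $L$, with no extra sign. This is stated in the background. The norm computation in the second paragraph is then routine induction on $k$, using $L_1L_{-1}^kq=k(2\Delta+k-1)L_{-1}^{k-1}q$.
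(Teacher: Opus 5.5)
Your proposal is correct and is essentially the paper's proof: self-adjointness $L_n^*=L_{-n}$ from $g(L)=L$ and \eqref{eq:qpinv}, orthogonality of the decomposition \eqref{eq:sl2dec} by moving the $L_{-1}$'s across, the norm formula $(L_{-1}^kq,L_{-1}^kq)=k!\prod_{s=1}^{k}(2\Delta+s-1)\,(q,q)$, and Lemma \ref{lem:key} applied to the quasiprimaries. One small wording point: when $\Delta\ne\Delta'$ but $\Delta+k=\Delta'+l$, the pairing vanishes because $L_1q'=0$ (resp.\ $L_1q=0$), which is what the factor $\delta_{kl}$ in your formula records, and not ``by weight''.
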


\begin{proof}
By Theorem \ref{thm:kmp1}(1) the normalized Hermitian form $(\cdot,\cdot)$ on $V$ exists and is
unique.  We have to  show that it is positive semi-definite.

Since $g(L)=L$, formula \eqref{eq:qpinv} applied to $a=L$ gives
\begin{equation}\label{eq:Ladj}
(v,L_nu)=(L_{-n}v,u),\qquad v,u\in V,
\end{equation}
whence $(L_0v,u)=(v,L_0u)$ and distinct $L_0$-eigenspaces are orthogonal. 

Consider the decomposition \eqref{eq:sl2dec} of Proposition \ref{prop:kmp2}.  Let
$q\in Q_\Delta$, $q'\in Q_{\Delta'}$ and $k>k'\ge0$ with $\Delta+k=\Delta'+k'$
(otherwise the two vectors lie in different $L_0$-eigenspaces and are orthogonal).
Moving the $L_{-1}$'s from the second argument to the first by \eqref{eq:Ladj}, we obtain
\[
(L_{-1}^kq,\,L_{-1}^{k'}q')=(L_1^{k'}L_{-1}^{k}q,\,q')
=\mu\,(L_{-1}^{k-k'}q,\,q')=\mu\,(q,L_1^{k-k'}q')=0
\]
for a suitable scalar $\mu$, because $L_1q'=0$ and $k-k'\ge1$.  Hence the
decomposition \eqref{eq:sl2dec} is orthogonal.  Finally, for $q\in Q_\Delta$ with
$\Delta>0$, the $sl_2$-relation $L_1L_{-1}^kq=k(2\Delta+k-1)L_{-1}^{k-1}q$ gives
\[
(L_{-1}^kq,\,L_{-1}^kq)=(L_1^kL_{-1}^kq,\,q)
=k!\prod_{s=1}^{k}(2\Delta+s-1)\;(q,q)\;\ge\;0
\]
by Lemma \ref{lem:key}, the scalar being a positive real number.
  Together with
$(\one,\one)=1$ this proves that $(\cdot,\cdot)$ is positive semi-definite on $V$.
\end{proof}

\begin{proposition}\label{previous}  Let $V$ be  a  unitary simple vertex algebra. Then the $L_0$-grading of $V$ is compatible with parity, i.e.\
$(-1)^{p(v)}v=(-1)^{2L_0}v$ for all $L_0$-eigenvectors $v\in V$, hence $V$ is a vertex operator superalgebra in
the sense of \cite[Rmk.~2.2]{KMP}.\end{proposition}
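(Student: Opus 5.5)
We want to show that for a unitary simple $V$, an $L_0$-eigenvector $v$ has parity $p(v)\equiv 2\Delta_v \pmod 2$. The plan is to test the positivity of the invariant form against the phase $e^{-\pi\ii(\Delta_a+\frac12p(a))}$ appearing in $g$.

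First I reduce to quasiprimary vectors. By Proposition \ref{prop:kmp2}, $V=\C\one\oplus\bigoplus_k L_{-1}^kQ$. Since $L_{-1}$ is even and shifts $L_0$ by $1$, it preserves both $(-1)^{p}$ and $(-1)^{2L_0}$. It is also convenient to note that $(-1)^{2L_0}$ and $(-1)^{p}$ both act diagonally on each $V_\Delta$, and that each $Q_\Delta$ splits into even and odd quasiprimary parts. So it suffices to show the following: a parity-homogeneous quasiprimary $a\in Q_\Delta$, $\Delta>0$, with $p(a)\not\equiv 2\Delta$, must vanish.

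For such $a$, I use Theorem \ref{thm:kmp1}(3), which gives $(a,a)\one=g(a)_{(2\Delta-1)}a$. The aim is to compare this with the same expression computed via skew-symmetry, so as to force $(a,a)=-(a,a)$ or a comparable sign contradiction.

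The main computation is the following. Skew-symmetry gives
\[
b_{(n)}a=-p(a,b)(-1)^{n}\,a_{(n)}b+\dots,
\]
where the dots are total derivatives. In top degree, with $n=2\Delta-1$, the correction terms land in $T V_{<0}=0$, so $b_{(2\Delta-1)}a=-p(a,b)(-1)^{2\Delta-1}a_{(2\Delta-1)}b$. Here $2\Delta-1$ is an integer, which is what makes $(-1)^{2\Delta-1}$ meaningful; for $\Delta$ a half-integer one uses the convention that the exponent $2\Delta-1$ is an integer $N$.

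Next, with $b=g(a)$ I apply $g$ to the relation $(a,a)\one=b_{(N)}a$. Since $g$ is a conjugate-linear automorphism up to phases and $g^2=I$, this gives $\overline{(a,a)}\one=g(b_{(N)}a)=\epsilon\, a_{(N)}g(a)$, where $\epsilon$ is a phase to be computed. Combining this with skew-symmetry yields a relation $(a,a)=\eta\,\overline{(a,a)}$ with $\eta=-p(a,a)(-1)^{N}\epsilon$. A direct computation from $g=e^{-\pi\ii(\Delta+\frac12 p)}\phi$ should give $\eta=-1$ exactly when $p(a)\not\equiv 2\Delta$. Since $(a,a)$ is real, this forces $(a,a)=0$.

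Finally I conclude. Unitarity makes the form positive semi-definite, so $(a,a)=0$ puts $a$ in the radical. The radical is an ideal by Theorem \ref{thm:kmp1}(2), and it is proper because $(\one,\one)=1$; since $V$ is simple, it is zero, hence $a=0$. Therefore every quasiprimary vector, and by the first reduction every $L_0$-eigenvector, satisfies $(-1)^{p(v)}=(-1)^{2L_0}$.

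The main obstacle is the sign bookkeeping in the phase $\eta$. This needs the precise behaviour of $g$ on $n$-th products, namely how $g$ acts when it is not an honest automorphism (cf.\ \cite{KMP}), together with the super skew-symmetry sign. If that route proves messy, my first fallback is to evaluate $(a,a)$ directly through \eqref{eq:qpinv} with $a_{-\Delta}$ and its adjoint, using $(a,a)=(g(g(a))_{\ldots})$ symmetry of the Hermitian form, and extract the phase from that.
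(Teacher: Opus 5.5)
There is a genuine gap in your central step: the phase $\eta$ is identically $+1$, not $-1$, so your comparison only reproduces $(a,a)=\ov{(a,a)}$, which you already know. To see this, put $b=g(a)$, $N=2\Delta-1$ and $p=p(a)$. Then $g^2=I$ gives $\phi(a)=e^{\pi\ii(\Delta+\frac12p)}b$ and $\phi(b)=e^{\pi\ii(\Delta+\frac12p)}a$. Applying the conjugate linear automorphism $\phi$ to $(a,a)\one=b_{(N)}a$ therefore gives $\ov{(a,a)}\,\one=(-1)^{2\Delta+p}a_{(N)}b$. Skew-symmetry gives the same thing, $b_{(N)}a=(-1)^{2\Delta+p}a_{(N)}b$, so the incompatibility sign occurs on both sides and cancels.

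This is not a bookkeeping slip, and your fallback through \eqref{eq:qpinv} hits the same wall. For incompatible quasiprimaries that are not $\phi$-fixed, invariance and skew-symmetry alone do not force $(a,a)=0$. What they do give is this: for $\phi$-fixed incompatible $x,y\in Q_\Delta$ of the same parity, $(x,y)=-(y,x)\in\ii\R$. Hence $(x+\ii y,x+\ii y)=2\ii(x,y)$, and nothing in the identities you invoke makes this vanish. Positivity has to be applied on a real form.

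The missing idea is to reduce to $\phi$-fixed vectors, as the paper does. $\phi$ preserves $L_0$-eigenspaces and parity, and it commutes with $L_1$ because $\phi(L)=L$. So each parity component of $Q_\Delta$ is spanned over $\C$ by $\phi$-fixed quasiprimaries, and it suffices to show that a $\phi$-fixed incompatible $a$ vanishes. For such $a$, $g(a)=e^{-\frac{\pi\ii}{2}(2\Delta+p)}a$ is a multiple of $a$ itself. Skew-symmetry with both arguments equal to $a$ then compares $a_{(N)}a$ with itself: $a_{(N)}a=(-1)^{2\Delta+p}a_{(N)}a=-a_{(N)}a$. Hence $a_{(N)}a=0$, and $(a,a)=0$ by \eqref{eq:top}.

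From there your final paragraph finishes the proof (positive semi-definiteness puts $a$ in the radical, which is a proper ideal, hence zero since $V$ is simple). Your reduction to parity-homogeneous quasiprimaries via Proposition \ref{prop:kmp2} is fine and can be kept unchanged.
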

\begin{proof}
Call $v$ homogeneous \emph{incompatible} if $2\Delta_v+p(v)$ is odd.  Since $\phi$ is
a conjugate linear involution preserving $L_0$-eigenspaces and parity, each
$V_\Delta$ is spanned over $\C$ by $\phi$-fixed vectors, and $L_1$ commutes with the
relevant decomposition, so it suffices to prove that every $\phi$-fixed incompatible
quasiprimary vector $a$ vanishes.

Let $a$ be such a vector and $\Delta=\Delta_a$.  Skew-symmetry
$Y(x,z)y=p(x,y)e^{zL_{-1}}Y(y,-z)x$ gives, for $x,y$ of conformal weight $\Delta$,
\begin{align*}
x_{(2\Delta-1)}y&=p(x,y)\sum_{j\ge0}\frac{(-1)^{2\Delta+j}}{j!}
L_{-1}^j\bigl(y_{(2\Delta-1+j)}x\bigr)\\
&=p(x,y)(-1)^{2\Delta}\,y_{(2\Delta-1)}x,
\end{align*}
since $y_{(2\Delta-1+j)}x\in V_{-j}=0$ for $j\ge1$.  Taking $x=y=a$ gives
\[
a_{(2\Delta-1)}a=(-1)^{2\Delta+p(a)}\,a_{(2\Delta-1)}a ,
\]
so $\langle a_{(2\Delta-1)}a\rangle=0$ when $2\Delta+p(a)$ is odd.  As $\phi(a)=a$ we
have $g(a)=e^{-\frac{\pi\ii}{2}(2\Delta+p(a))}a$, and \eqref{eq:top} gives
\[
(a,a)=e^{-\frac{\pi\ii}{2}(2\Delta+p(a))}\bigl\langle a_{(2\Delta-1)}a\bigr\rangle=0 .
\]
Since  the Hermitian form is positive definite,  $a=0$.
\end{proof}

\begin{remark} Combining 
Proposition \ref{previous} with Theorem \ref{thm:main} we get   the analogue, for modules, of the spin--statistics
statements of \cite{AL} and of \cite[Rmk.~7.10]{KMP}: if a simple conformal
vertex algebra $V$ with $L_1V_1=0$ contains an odd non-zero vector of integer
conformal weight (or an even non-zero vector of half-integer conformal weight), then $V$
is not unitary, and it has no non-zero unitary positive energy $\gr$-twisted module, for
any choice of $\phi$ and of $\gr$ compatible with $\phi$.
\end{remark}

\section{Unitarity of principal $W$-algebras}\label{sec:W}
Recall that Theorem \ref{thm:kmp1} applies to every conjugate linear involution of the vertex algebra $W^k(\g,\mathfrak s)$ fixing $L$ \cite[Theorem 7.4(b)]{KMP}.\par
Let $\g$ be a simple Lie algebra, $\h\subseteq\g$ a Cartan subalgebra, and let
$\mathfrak s_{pr}=span\{e_{pr},x_{pr},f_{pr}\}$ be a principal $sl_2$-subalgebra of $\g$. We may assume that $x_{pr}\in \h$, and we have
the $ad(x_{pr})$-eigenspace decomposition $\g=\bigoplus\limits_{j\in\Z}\g_j$, where $\g_0=\h$. Then there is a unique subset of positive roots, compatible with the $ad(x_{pr})$-decomposition (i.e. simple root vectors lie in
$\g_1$), and $x_{pr}=\rho^\vee$, the half of the sum of positive coroots. The Dynkin characteristic of $e_{pr}$, i.e.\ the eigenvalues of $ad(x_{pr})$ on simple root vectors, are
$1$.
Throughout this section $(\cdot|\cdot)$ is the  invariant bilinear form, normalized by the condition $(\alpha|\alpha)=2$ for a long root, $\h_\R$ is the real span of the coroots  (on which $(\cdot|\cdot)$ is positive definite), and $\ell=\rank(\g)$.  Let $\rho$ be half of the sum of positive roots. Since $\rho,\rho^\vee\in\h_\R$ are non-zero we have $\|\rho\|^2>0$, $\|\rho^\vee\|^2>0$ and $(\rho|\rho^\vee)>0$  (since $\a(\rho^\vee)$ is the height of $\a$).  We abbreviate
$$t=k+h^\vee\in\C^\times.$$
\par
Let
\begin{equation}\label{trho}\tilde \rho= t \rho^\vee - \rho.\end{equation}
 By    \cite[Corollary 3.1]{KWfree} we have the following free field realization of the universal principal $W$-algebra $W^k(\g)$ associated to $\mathfrak s_{pr}$:
$$\Psi_t: W^k(\g)\to \mathcal V^k=V^{t}(\h),$$
where $V^t(\h)=V^{k+h^\vee}(\h)$ is  the Heisenberg
vertex algebra generated by $\h$ with $[u_\l v]=\l\,t(u|v)$ and 
 \begin{align}
\label{FFR4}&    \Psi_t( L)= L^{\,Sug}_t \ + \ \tfrac{1}{t} \ T \tilde \rho,\end{align}
 where $L^{\,Sug}_t=\frac{1}{2t}\sum_{j=1}^{\ell} : u^j u_j :$, $\{u^j\},\{ u_j\}$ being a pair of dual bases of $	\mathfrak h$ w.r.t. $(\cdot|\cdot)$.
 \par 
  Assume that $(\tilde\rho|\tilde\rho)\ne0$ and write $\h= \C  \tilde\rho\oplus \tilde\rho^\perp$.
Using the normalization
\begin{equation}\label{norm}a=\frac{1}{\sqrt{ t (\tilde\rho|\tilde\rho)}}\tilde\rho,
\end{equation}
we have $t(a|a)=1$; since the $\l$-bracket of $V^t(\h)$ is $[u_\l v]=\l\,t(u|v)$, this gives
$V^{t }(\h)=V^{1}(\C a)\otimes V^{t }(\tilde\rho^\perp)$. (If $(\tilde\rho|\tilde\rho)=0$, then
\eqref{condition} below obviously holds, and, by the proof of Lemma \ref{lem:region}, $t$ is an
endpoint of the arc described there. If $\g$ is not simply laced, this case plays no role in the
sequel; if $\g$ is simply laced, then $t=1$, so that $\tilde\rho=0$ and
$\Psi_t(L)=L^{\,Sug}_t$, and in this case we take for $a$ any element of $\h_\R$ with $(a|a)=1$, and replace $\tilde\rho^\perp$ by
$a^\perp$; then $\l_t=0$ below, and all the arguments of this section go through verbatim.)
The conformal  vector of the vertex algebra $\mathcal V^k$ is
\begin{align}\label{Lfreefield}
L_{free}&=\frac{1}{2}: a a :+L_{\tilde\rho^\perp}.
\end{align}
where $L_{\tilde\rho^\perp}$ is the conformal vector of the Heisenberg algebra attached to $\tilde\rho^\perp$ (see e.g. \cite[Section 5.2]{KMP}). Hence
\begin{align*}\Psi_t( L)&= L_{free}+   \frac{1}{t}T (\tilde\rho)=\frac{1}{2}: a a :+\frac{1}{t}T (\tilde\rho)+L_{\tilde\rho^\perp}\\&=\frac{1}{2}: a a :+\l_t\,T (a)+L_{\tilde\rho^\perp},\qquad \l_t={\frac{\sqrt{t(\tilde\rho|\tilde\rho)}}{t}},\end{align*}
the square root being the one in \eqref{norm}, so that $\frac1t\tilde\rho=\l_t\,a$ and $\l_t^2=(\tilde\rho|\tilde\rho)/t$.
Assume now
\begin{equation}\label{condition}
\frac{(\tilde\rho|\tilde\rho)}{t}\in\R_{\le 0},
\end{equation}
so that $\l_t\in\sqrt{-1}\,\R$.
Then $M(\l_t)\otimes V^{k+h^\vee}(\tilde\rho^\perp)$ is a representation of $W^k(\g)$ via $\Psi_t$; here $M(s)$ denotes the irreducible module of $V^1(\C a)$ on which $a_{(0)}$ acts by $s$.
If $k\in\R$ then condition \eqref{condition} simply means that $k<-h^\vee$, unless $\tilde\rho=0$.
We shall prove in Lemmas \ref{lem:invol}--\ref{lem:fock} below that $M(\l_t)\otimes V^{t}(\tilde\rho^\perp)$ carries a positive definite Hermitian form which is invariant with respect to a conjugate linear involution of $W^k(\g)$ fixing $L$.
Recall from \cite[(3.3)]{KWfree} that the central charge of $W^k(\g)$ is 
\begin{equation}\label{cc}c_{\g,k}=\ell-\frac{12}{t}||\tilde\rho||^2.\end{equation}
Since
$$
\lim_{k\to-\infty}c_{\g,k}=\lim_{k\to-h^\vee}c_{\g,k}=+\infty
$$
the function $c_{\g,k}$ is bounded below on the interval $(-\infty, -h^\vee)$. Set
$$
m_0=\inf_{-\infty<k< -h^\vee}c_{\g,k}.
$$
Elementary calculations give
$$m_0=\ell+24((\rho|\rho^\vee)+||\rho||\cdot||\rho^\vee||).$$
\begingroup
\subsection{The involution and a unitary free field module}\label{sub:complex}
We keep the notation above.  Bars denote complex conjugation of
$\h$ with respect to $\h_\R$, and of scalars.  Besides the properties of $\Psi$
recalled above we use two facts:
\begin{itemize}
\item  a Hermitian form on a module which
satisfies \eqref{eq:inv} for all $a$ in a strong generating set of $V$ satisfies
it for all $a\in V$;
\item for $\g$ simply laced and $(k+h^\vee)(k'+h^\vee)=1$, Feigin-Frenkel duality \cite{FF} holds in the free field form: if $\theta_t:V^{1/t}(\h)\to V^{t}(\h)$ 
is  the vertex algebra isomorphism defined by  
\begin{equation}\label{cx:theta}
\theta_t(u)= -u/t\ (u\in\h),
\end{equation}
then 
\begin{equation}\label{pMM}
\theta_t\(\Psi_{1/t}(W^{k'}(\g))\)=\Psi_t\(W^{k}(\g)\).
\end{equation}
This holds for every $t\in\C^\times$, not only for generic $k$; see
Proposition \ref{prop:FFall} below.
\end{itemize}

\begin{lemma}\label{lem:region} Set $r_0=\|\rho\|/\|\rho^\vee\|$ and
$\cos\theta_0=(\rho|\rho^\vee)/(\|\rho\|\,\|\rho^\vee\|)\in(0,1]$.  Then
\eqref{condition} holds if and only if either $t\in\R_{<0}$, or $t$ lies on the arc
$t=r_0e^{\ii\theta}$ with $\cos\theta\le\cos\theta_0$.  Moreover $c_{\g,k}$ maps
$\R_{<0}$ onto $[m_0,+\infty)$ and the above arc onto $[\ell,m_0]$, so that
\eqref{condition} describes exactly the locus where $c_{\g,k}$ is real and
$\ge\ell$.  If $\g$ is simply laced, then $\rho=\rho^\vee$, $r_0=1$ and
$\theta_0=0$, so the arc is the whole circle $S^1$.
\end{lemma}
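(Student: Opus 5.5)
Write $\tilde\rho=t\rho^\vee-\rho$ and expand
\[
\frac{(\tilde\rho|\tilde\rho)}{t}=t\|\rho^\vee\|^2-2(\rho|\rho^\vee)+\frac{\|\rho\|^2}{t}.
\]
Everything will follow from this explicit expression; both parts of the lemma (the locus where \eqref{condition} holds, and the values of $c_{\g,k}$ there) become elementary computations once it is in hand.

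\textbf{The locus.} Put $t=re^{\ii\theta}$ with $r>0$. The imaginary part of the expression is $\sin\theta\,\bigl(r\|\rho^\vee\|^2-\|\rho\|^2/r\bigr)$. It vanishes exactly when $\sin\theta=0$ or $r=r_0$.
\begin{itemize}
\item If $t>0$, the real part equals $t\|\rho^\vee\|^2+\|\rho\|^2/t-2(\rho|\rho^\vee)$. By AM--GM and Cauchy--Schwarz this is $\ge 2\|\rho\|\|\rho^\vee\|-2(\rho|\rho^\vee)\ge0$. It is $\le0$ only when $t=r_0$ and $\theta_0=0$, and that case is a point of the arc with $\theta=0$.
\item If $t<0$, every term of the real part is negative, so \eqref{condition} holds.
\item On the circle $r=r_0$, we have $t\|\rho^\vee\|^2+\|\rho\|^2/t=2\|\rho\|\|\rho^\vee\|\cos\theta$. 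The expression is therefore $2\|\rho\|\|\rho^\vee\|(\cos\theta-\cos\theta_0)$, which is $\le0$ exactly when $\cos\theta\le\cos\theta_0$.
\end{itemize}
This proves the first statement.

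\textbf{The central charge.} By \eqref{cc}, $c_{\g,k}=\ell-12(\tilde\rho|\tilde\rho)/t$.
\begin{itemize}
\item On $\R_{<0}$, write $t=-s$ with $s>0$. Then $c=\ell+12\bigl(s\|\rho^\vee\|^2+\|\rho\|^2/s+2(\rho|\rho^\vee)\bigr)$. This is continuous and tends to $+\infty$ at both ends. Its minimum, attained at $s=r_0$, is $\ell+24(\|\rho\|\|\rho^\vee\|+(\rho|\rho^\vee))=m_0$. So the image is $[m_0,\infty)$.
\item On the arc, $c=\ell+24\|\rho\|\|\rho^\vee\|(\cos\theta_0-\cos\theta)$. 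As $\cos\theta$ ranges over $[-1,\cos\theta_0]$, this ranges continuously over $[\ell,m_0]$. The value $\ell$ occurs at the endpoints $\cos\theta=\cos\theta_0$, where $(\tilde\rho|\tilde\rho)=0$, consistent with the parenthetical remark. The value $m_0$ occurs at $\theta=\pi$, i.e.\ $t=-r_0$.
\end{itemize}
Since $c$ is real exactly when $(\tilde\rho|\tilde\rho)/t$ is real, and then $c\ge\ell$ iff that quantity is $\le0$, the locus where $c$ is real and $\ge\ell$ is exactly the locus of \eqref{condition}.

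\textbf{The simply laced case.} With long roots normalized to $(\alpha|\alpha)=2$, the identification $\h\cong\h^*$ sends $\alpha^\vee$ to $\alpha$. Hence $\rho=\rho^\vee$, so $r_0=1$ and $\cos\theta_0=1$, and the arc is the whole unit circle.

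No step is a real obstacle. The only care needed is the $t>0$ case and the degenerate cases where $\theta_0=0$ or $(\tilde\rho|\tilde\rho)=0$, which coincide with points or endpoints of the arc.
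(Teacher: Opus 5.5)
Your proposal is correct and follows essentially the same route as the paper: the same expansion $F(t)=t\|\rho^\vee\|^2-2(\rho|\rho^\vee)+\|\rho\|^2/t$, the same imaginary-part criterion, the same evaluation $2\|\rho\|\,\|\rho^\vee\|(\cos\theta-\cos\theta_0)$ on the circle $|t|=r_0$, and AM--GM on $\R_{<0}$ to get $[m_0,+\infty)$. The only difference is cosmetic: for $t>0$ you use AM--GM together with Cauchy--Schwarz, whereas the paper writes $F(t)=\|\tilde\rho\|^2/t$ with $\tilde\rho\in\h_\R$. Your version has the small advantage of locating the degenerate point ($t=r_0$, $\theta_0=0$) without invoking when $\rho$ and $\rho^\vee$ are proportional.
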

\begin{proof} Write $A=\|\rho^\vee\|^2$, $B=(\rho|\rho^\vee)$, $C=\|\rho\|^2$, all
positive, and let
$$
F(t):=\frac{(\tilde\rho|\tilde\rho)}{t}=tA-2B+\frac{C}{t}.
$$
If $t=re^{\ii\theta}$ then $\Im F=\sin\theta\,(rA-C/r)$, so $F\in\R$ if and only if
$t\in\R^\times$ or $r=\sqrt{C/A}=r_0$.  For $t$ real, $F(t)=\|\tilde\rho\|^2/t$ has
the sign of $t$, because $\tilde\rho\in\h_\R$ and $\tilde\rho\ne0$ unless
$\rho=t\rho^\vee$, which for real $t$ happens only if $\g$ is simply laced and
$t=1$; apart from that point, which lies on the circle $|t|=r_0$, $F\le0$ exactly for
$t<0$, and then $F(t)=-\frac{1}{12}(c(t)-\ell)$ gives, by the
arithmetic-geometric mean inequality, $c(\R_{<0})=[m_0,+\infty)$.  For
$|t|=r_0$,
$$
F(r_0e^{\ii\theta})=\|\rho\|\,\|\rho^\vee\|(e^{\ii\theta}+e^{-\ii\theta})-2B
=2\|\rho\|\,\|\rho^\vee\|\cos\theta-2(\rho|\rho^\vee),
$$
which is real and $\le0$ precisely when $\cos\theta\le\cos\theta_0$; on that arc
$$
c=\ell+24(\rho|\rho^\vee)-24\|\rho\|\,\|\rho^\vee\|\cos\theta
$$
takes all the values between $\ell$ (at $\theta=\pm\theta_0$) and $m_0$ (at
$\theta=\pi$).
\end{proof}

Fix a square root $\sqrt{t(\tilde\rho|\tilde\rho)}$ and set, as in \eqref{norm} (for $\tilde\rho=0$ we use the
convention stated after \eqref{norm}, and then $\si(a)=-a$ in case (2) of Lemma
\ref{lem:invol} below because $a\in\h_\R$),
\begin{equation}\label{cx:alam}
a=\frac{\tilde\rho}{\sqrt{t(\tilde\rho|\tilde\rho)}},\qquad
\l_t={\frac{\sqrt{t(\tilde\rho|\tilde\rho)}}{t}},\qquad\text{so that}\qquad
\Psi_t(L)=\tfrac12:aa:+\l_t\,T(a)+L_{\tilde\rho^\perp},
\end{equation}
and $t(a|a)=1$, $\l_t\in\ii\R$ by \eqref{condition}.  Replacing this square root by its opposite replaces $(a,\l_t)$ by $(-a,-\l_t)$, and leaves $\Psi_t(L)$ and the
$V^1(\C a)$-module $M(\l_t)$ unchanged.

\begin{lemma}\label{lem:invol} Let $\si:\h\to\h$ be a conjugate linear map.  Then $\si$
extends to a conjugate linear automorphism of $V^t(\h)$ fixing $\Psi_t(L)$ if and
only if
\begin{equation}\label{cx:sigcond}
{\rm(a)}\ \ t(\si u|\si v)=\ov{t(u|v)}\ \ (u,v\in\h),
\qquad\qquad
{\rm(b)}\ \ \si(\tilde\rho)=\frac{\ov t}{t}\,\tilde\rho .
\end{equation}
Such a $\si$ exists, and satisfies $\si^2=\mathrm{id}$, $\si(a)=-a$ and
$t(\cdot|\cdot)<0$ on $\h^\si\setminus\{0\}$, in each of the following cases ($u\in \h)$:
\begin{enumerate}
\item $t\in\R_{<0}$, with $\si(u)=\ov u$;
\item $\g$ simply laced and $|t|=1$, with $\si(u)=-\ov t\,\ov u$.
\end{enumerate}
Conversely, if $\si$ satisfies \eqref{cx:sigcond} and $\tilde\rho\ne0$, then
$t\in\R^\times$ or $|t|=r_0$.
\end{lemma}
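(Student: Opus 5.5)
The plan is to treat the three assertions of Lemma \ref{lem:invol} separately: the extension criterion \eqref{cx:sigcond}, the two explicit constructions, and the converse.

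\emph{Extension criterion.} $V^t(\h)$ is freely generated by $\h$ subject to $[u_\l v]=\l\,t(u|v)$. A conjugate linear bijection $\si$ of $\h$ therefore extends to a conjugate linear automorphism if and only if it respects the $\l$-bracket. Since $\si$ is conjugate linear on scalars, this reads $\l\,t(\si u|\si v)=\ov{\l\,t(u|v)}$ with $\l$ treated as real, which is (a). Conversely, if (a) holds, then $u_{(n)}\mapsto(\si u)_{(n)}$ preserves the Heisenberg commutation relations (conjugated). It then extends to the Fock space by applying it to monomials and conjugating coefficients. Next I apply $\si$ to $\Psi_t(L)=\frac1{2t}\sum_j:u^ju_j:+\frac1t T\tilde\rho$. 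The Sugawara part is $\frac12$ times the Casimir of the form $t(\cdot|\cdot)$. By (a), $\si$ carries $t$-dual bases to $\ov t$-dual bases, so $\si$ fixes $L^{Sug}_t$. The remaining term gives $\si(\frac1tT\tilde\rho)=\frac1{\ov t}T\si(\tilde\rho)$. This equals $\frac1tT\tilde\rho$ exactly when (b) holds, using that $T$ is injective on $\h$.

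\emph{Existence in cases (1) and (2).}
\begin{itemize}
\item Case (1). Here $t$ is real and $\tilde\rho\in\h_\R$, so $\si(u)=\ov u$ satisfies (a) and (b) trivially, and $\si^2=\mathrm{id}$. Moreover $\h^\si=\h_\R$, on which $t(\cdot|\cdot)$ is negative definite. For $a$: condition \eqref{condition} with $t<0$ makes $t(\tilde\rho|\tilde\rho)>0$, so the square root is real and $a\in\h_\R$. The constraint $t(a|a)=1>0$ then clashes with negativity on $\h_\R$, so I must recheck this sign. Writing $a=\tilde\rho/\sqrt{t(\tilde\rho|\tilde\rho)}$ with $t(\tilde\rho|\tilde\rho)=t\|\tilde\rho\|^2<0$, the square root is purely imaginary. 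Hence $\ov a=-a$, i.e.\ $\si(a)=-a$, as claimed.
\item Case (2). Here $\rho=\rho^\vee$ and $\tilde\rho=(t-1)\rho$ with $|t|=1$. For (a), $t(\si u|\si v)=t\,\ov t^2\,\ov{(u|v)}=\ov t\,\ov{(u|v)}$ because $t\ov t=1$. For (b), $\si(\tilde\rho)=-\ov t(\ov t-1)\rho$, and this should equal $\ov t(t-1)\rho/t=\ov t^2(t-1)\rho=(\ov t-\ov t^2)\rho$, which it does. For $\si^2$, $\si^2u=-\ov t\,\ov{(-\ov t\ov u)}=t\ov t u=u$. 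The fixed space is $\h^\si=\{u:\ov u=-t u\}$; writing $t=e^{\ii\theta}$ it equals $\ii e^{-\ii\theta/2}\h_\R$. On it $t(u|u)=e^{\ii\theta}\cdot(-e^{-\ii\theta})\|x\|^2<0$. The identity $\si(a)=-a$ follows from $\si(\tilde\rho)=\frac{\ov t}{t}\tilde\rho$ and $\ov{\sqrt{t(\tilde\rho|\tilde\rho)}}=\pm\ov t\sqrt{\cdots}\cdot(\text{phase})$; I will check directly that $\ov{s}/s$ with $s^2=t(\tilde\rho|\tilde\rho)$ equals $-\ov t/t$, using \eqref{condition}. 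For the degenerate case $\tilde\rho=0$ the convention already sets $\si(a)=-a$.
\end{itemize}

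\emph{Converse.} The main point is the converse. From (a) and (b), taking $u=v=\tilde\rho$ gives $t(\tilde\rho|\tilde\rho)\,|\ov t/t|^2\cdot\frac{t^2}{\ov t^2}$. More cleanly, $\ov{t(\tilde\rho|\tilde\rho)}=t(\si\tilde\rho|\si\tilde\rho)=\frac{\ov t^2}{t^2}\,t(\tilde\rho|\tilde\rho)$. Hence $\ov{F(t)}\,\ov t=\frac{\ov t^2}{t}F(t)$, i.e.\ $\ov{F(t)}=F(t)$, so $F(t)$ is real. Lemma \ref{lem:region} then shows that $F(t)\in\R$ forces $t\in\R^\times$ or $|t|=r_0$.

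The expected obstacle is the sign bookkeeping for $\si(a)=-a$ via the chosen square root; everything else is a direct verification.
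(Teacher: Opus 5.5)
Your proposal is correct and is essentially the paper's proof. It uses the same extension criterion via the $\lambda$-bracket and the term $\frac1t T\tilde\rho$, the same explicit checks in cases (1) and (2), and the same squared-norm computation on (b) for the converse: your reality of $F(t)=(\tilde\rho|\tilde\rho)/t$ is equivalent to the paper's $(t-\ov t)(\|\rho\|^2-|t|^2\|\rho^\vee\|^2)=0$. The check you defer in case (2) also goes through: $s^2=t(\tilde\rho|\tilde\rho)=t^2F$ with $F<0$ forces $s\in t\,\ii\R$, hence $\ov s/s=-\ov t/t$ and $\si(a)=\ov s^{\,-1}\frac{\ov t}{t}\tilde\rho=-a$. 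This is the argument the paper itself uses in Proposition \ref{rem:nonsl}.

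Two slips should be fixed. First, by (a) $\si$ carries $t(\cdot|\cdot)$-dual bases to $t(\cdot|\cdot)$-dual bases, not to $\ov t$-dual ones. Second, in the converse $t(\tilde\rho|\tilde\rho)=t^2F$, so the identity reads $\ov t^{\,2}\,\ov F=\ov t^{\,2}F$; your displayed $\ov F\,\ov t=\frac{\ov t^{\,2}}{t}F$ would only give $\ov F=\frac{\ov t}{t}F$, not the reality of $F$.
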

\begin{proof} Since $\tilde\rho_{(0)}$ acts trivially on the vacuum module,
$\Psi_t(L)$ and $L^{\,Sug}_t$ define the same grading, so an automorphism fixing
$\Psi_t(L)$ preserves the weight one space $\h$ and is determined by its
restriction to it.  Condition (a) says precisely that $\si$ is a (conjugate
linear) automorphism of $V^t(\h)$; granting (a), $\si$ maps
$t(\cdot|\cdot)$-dual bases to $t(\cdot|\cdot)$-dual bases, so
$\si(L^{\,Sug}_t)=L^{\,Sug}_t$, and since
$\si(\frac1tT\tilde\rho)=\frac1{\ov t}T(\si\tilde\rho)$, fixing $\Psi_t(L)$ is
equivalent to (b).

Let $\cdot \mapsto  \ov{(\cdot)}$ denote the complex conjugation of $\g$. Write $\si=\chi M\circ\ov{(\cdot)}$ with $M$ orthogonal for $(\cdot|\cdot)$ and
$\chi^2=\ov t/t$, which is what (a) amounts to.  Condition (b) reads
$\chi\(\ov tM\rho^\vee-M\rho\)=\frac{\ov t}{t}\(t\rho^\vee-\rho\)$.  Taking the
square of the norms of both sides and using that $M$ is orthogonal, the terms in
$(\rho|\rho^\vee)$ cancel and one is left with
$$
\frac{\ov t}{t}\(\ov t^{\,2}A+C\)=\ov t^{\,2}A+\frac{\ov t^{\,2}}{t^2}C
\iff (t-\ov t)\(C-|t|^2A\)=0,
$$
that is $t\in\R^\times$ or $|t|=r_0$.  This proves the converse statement.
\par
(1) For $t$ real, $\si=\ov{(\cdot)}$ satisfies (a) because $t$ is real and
$(\cdot|\cdot)$ is defined over $\h_\R$, and (b) because $\tilde\rho\in\h_\R$ and
$\ov t/t=1$.  \par(2) For $\g$ simply laced and $|t|=1$ we have
$\tilde\rho=(t-1)\rho$ with $\rho\in\h_\R$; conditions \eqref{cx:sigcond} for
$\si(u)=c\ov u$ read $tc^2=\ov t$ and $c(\ov t-1)=\frac{\ov t}{t}(t-1)$, and
$(t-1)/(\ov t-1)=-t$ gives $c=-\ov t$, which indeed satisfies $c^2=\ov
t^{\,2}=\ov t/t$.

In both cases $|c_t|=1$ with $c_t=1$, resp.\ $c_t=-\ov t$, so $\si_t^2=\mathrm{id}$;
writing $c_t=e^{2\ii\psi}$ we get $\h^{\si_t}=e^{\ii\psi}\h_\R$ and
$t(u|u)=t\,c_t(w|w)$ for $u=e^{\ii\psi}w$, $w\in\h_\R$, which is negative because
$t\,c_t=t<0$ in case (1) and $t\,c_t=-t\ov t=-1$ in case (2).  Finally
$a=\zeta\tilde\rho$ with $\zeta=(t(\tilde\rho|\tilde\rho))^{-1/2}$ and
$\tilde\rho\in\h_\R$ in case (1), $\tilde\rho=(t-1)\rho$ in case (2); in case (1)
$t(\tilde\rho|\tilde\rho)<0$, so $\zeta$ is purely imaginary and
$\si(a)=\ov\zeta\,\tilde\rho=-\zeta\tilde\rho=-a$, while in case (2)
$a=\zeta'\rho$ with $\zeta'=1/(\sqrt t\,\|\rho\|)$, up to the choice of the square root, so
$\zeta'/\ov{\zeta'}=\ov t=c_t\cdot(-1)$ and
$\si(a)=c_t\ov{\zeta'}\rho=-\zeta'\rho=-a$ as well.
\end{proof}

\begin{lemma}\label{lem:miura} Denote by $\si_t$  the automorphisms of $V^t(\h)$ displayed  in  cases {\rm(1)}, {\rm(2)} of Lemma
\ref{lem:invol}. Then  $\si_t\(\Psi_t(W^k(\g))\)=\Psi_t(W^k(\g))$, so that $\si_t$
induces a conjugate linear involution $\phi_t$ of $W^k(\g)$ with $\phi_t(L)=L$
and $\Psi_t\circ\phi_t=\si_t\circ\Psi_t$.
\end{lemma}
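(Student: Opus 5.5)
The plan is to reduce to two facts: the free field realization $\Psi_t$ is defined over the field $\Q(t)$, and Feigin–Frenkel duality holds in the form \eqref{pMM}. Since $\si_t$ is a conjugate linear vertex algebra automorphism of $V^t(\h)$, and $\Psi_t(W^k(\g))$ is spanned by normally ordered products of derivatives of the images $\Psi_t(W^{(i)})$ of a set of free generators $W^{(1)}=L,\dots,W^{(\ell)}$, it suffices to show that $\si_t(\Psi_t(W^{(i)}))\in\Psi_t(W^k(\g))$ for each $i$.

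\emph{Rationality.} I would use that the Kac--Wakimoto construction \cite{KWfree} (BRST reduction followed by the Miura map) is defined over $\Q[k]$. Concretely, one can choose the generators $W^{(i)}$ so that, in a basis of $\h_\R$ consisting of coroots, the vectors $\Psi_t(W^{(i)})$ are polynomials in the Heisenberg modes whose coefficients lie in $\Q[t,t^{-1}]$. Let $\kappa:V^t(\h)\to V^{\ov t}(\h)$ be the conjugate linear isomorphism induced by complex conjugation $u\mapsto\ov u$ of $\h$ with respect to $\h_\R$. Rationality then gives
\[
\kappa\bigl(\Psi_t(W^k(\g))\bigr)=\Psi_{\ov t}\bigl(W^{\ov k}(\g)\bigr).
\]
I expect this to be the main obstacle, since it requires looking into the construction of \cite{KWfree}. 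If a direct check is awkward, I would fall back on the description of the image as the intersection of the kernels of the screening operators $\mathcal Q_{S_i}$ (with momenta proportional to the simple roots) for generic $t$. The conjugate of a screening current at level $t$ is, by \eqref{s:momentum}, a screening current at level $\ov t$. One would then pass to all $t$ using that $\Psi_t$ depends polynomially on $t$ and, by \cite{N}, is injective, so the graded dimensions of the images are independent of $t$.

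\emph{Case (1).} Here $t\in\R_{<0}$, so $\ov t=t$ and $\si_t=\kappa$. The displayed identity gives the claim at once.

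\emph{Case (2).} Here $\g$ is simply laced and $|t|=1$, so $\ov t=1/t$. Then $\si_t(u)=-\ov t\,\ov u=-\ov u/t$, which means $\si_t=\theta_t\circ\kappa$ with $\theta_t$ as in \eqref{cx:theta}. Let $k'$ be the level with $k'+h^\vee=1/t$, so that $\ov k=k'$. By rationality $\kappa$ maps $\Psi_t(W^k(\g))$ onto $\Psi_{1/t}(W^{k'}(\g))$. By \eqref{pMM}, which holds for all $t$ by Proposition \ref{prop:FFall}, $\theta_t$ maps this onto $\Psi_t(W^k(\g))$.

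\emph{Defining $\phi_t$.} Since $\Psi_t$ is injective \cite{N}, I would set $\phi_t=\Psi_t^{-1}\circ\si_t\circ\Psi_t$. This is a conjugate linear automorphism of $W^k(\g)$ and satisfies $\Psi_t\circ\phi_t=\si_t\circ\Psi_t$ by construction. From $\si_t^2=\mathrm{id}$ (Lemma \ref{lem:invol}) we get $\phi_t^2=\mathrm{id}$. From $\si_t(\Psi_t(L))=\Psi_t(L)$ (Lemma \ref{lem:invol}) we get $\phi_t(L)=L$.
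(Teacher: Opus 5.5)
Your proposal is correct and follows essentially the same route as the paper. In case (2) you write $\si_t=\theta_t\circ\ov{(\cdot)}$, move to the dual level $\ov k=k'$ by complex conjugation, apply \eqref{pMM} via Proposition \ref{prop:FFall}, and define $\phi_t=\Psi_t^{-1}\circ\si_t\circ\Psi_t$ using injectivity; this is exactly the paper's argument. Your ``rationality'' step, that conjugation sends $\Psi_t(W^k(\g))$ onto $\Psi_{\ov t}(W^{\ov k}(\g))$, is what the paper obtains by noting that $\Psi_t$ intertwines conjugation on $V^t(\h)$ with the involution of $W^k(\g)$ induced by the split real form of $\g$ (cf.\ \cite[Lemma 7.3]{KMP}).
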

\begin{proof} In case (1), $\si_t=\ov{(\cdot)}$ is the conjugation with respect to
the split real form of $V^t(\h)$, and it is intertwined by $\Psi_t$ with the conjugate linear involution of
$W^k(\g)$ induced by the split real form involution of $\g$; in particular it
preserves the image (cf.\ \cite[Lemma 7.3]{KMP}).

In case (2), $\ov t=1/t$ and $\ov k=k'$ is the Feigin-Frenkel dual level.  By the
same token, complex conjugation $\ov{(\cdot)}:V^t(\h)\to V^{1/t}(\h)$ satisfies
$\ov{\Psi_t(W^k(\g))}=\Psi_{1/t}(W^{k'}(\g))$, and since
$\theta_t(\ov u)=-\ov u/t=-\ov t\,\ov u=\si_t(u)$, \eqref{pMM}, which holds for
every $t\in\C^\times$ by Proposition \ref{prop:FFall}, gives
$$
\si_t\(\Psi_t(W^k(\g))\)=\theta_t\(\Psi_{1/t}(W^{k'}(\g))\)=\Psi_t\(W^k(\g)\).
$$
By Lemma \ref{lem:invol}, $\si_t$ fixes $\Psi_t(L)$ and is an involution; as
$\Psi_t$ is injective, the last assertion follows.  (Consistently, $\theta_t$
preserves the conformal vectors: the data \eqref{cx:alam} attached to $1/t$ are
$a'=\sqrt t\,\rho/\|\rho\|$ and $\l_{1/t}=-\l_t$, so $\theta_t(a')=-a'/t=-a$ and
$\theta_t(\Psi_{1/t}(L))=\frac12:aa:+\l_tT(a)+L_{\tilde\rho^\perp}=\Psi_t(L)$.)
\end{proof}
The following result is essentially proved in \cite{KMP} and \cite{KMP1}. We provide a proof for completeness.
\begin{lemma}\label{lem:fock} In both cases of Lemma \ref{lem:invol}, the module
$$
M_k=M(\l_t)\otimes V^{k+h^\vee}(\tilde\rho^\perp),
$$
is a positive energy module over $\(\mathcal V^k,\Psi_t(L)\)$, and the Hermitian
form $H=H_1\otimes H_2$, where $H_1$ is the $\si_t$-invariant Hermitian form on   $M(\l_t)$ over $V^1(\C a)$ normalized by $H_1(v_{\l_t},v_{\l_t})=1$ and
$H_2$ is the $\si_t$-invariant Hermitian form on $V^{k+h^\vee}(\tilde\rho^\perp)$
normalized by $H_2(\one,\one)=1$, is positive definite.
\end{lemma}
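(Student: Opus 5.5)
The plan is to split everything along the orthogonal decomposition $\h=\C a\oplus\tilde\rho^\perp$, which is preserved by $\si_t$ because $\si_t(a)=-a$ and $\si_t$ satisfies condition (a) of Lemma \ref{lem:invol}. This gives $\mathcal V^k=V^1(\C a)\otimes V^t(\tilde\rho^\perp)$ with $\si_t=\si^{(1)}\otimes\si^{(2)}$. Accordingly $\Psi_t(L)=L^{(1)}+L_{\tilde\rho^\perp}$, where $L^{(1)}=\frac12:aa:+\l_tT(a)$.

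\textbf{Positive energy.} On $M(\l_t)$ one has $L^{(1)}_0v_{\l_t}=\Delta(\l_t)v_{\l_t}$, and $L^{(1)}_0$ acts on the Fock space by $\Delta(\l_t)$ plus the usual nonnegative integer grading by the creation modes. This scalar is real: by \eqref{s:Deltaxi} with $t=1$ and $\xi=\l_ta$ it equals $\frac12\l_t^2-\l_t^2=-\frac12\l_t^2\ge0$, since $\l_t\in\ii\R$. Shifting the grading by this constant gives a positive energy structure. The same holds on $V^t(\tilde\rho^\perp)$ with its standard grading, and hence on the tensor product.

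\textbf{Invariant forms.} For a Heisenberg vertex algebra and a conjugate linear involution $\si$ preserving the form, the generators $u\in\h$ are primary of weight one for $L^{(1)}$ only up to the $T(a)$ correction. So I first compute $L_1a$ for $L^{(1)}$: $L^{(1)}_1a=-2\l_t\,t(a|a)\one\cdot(\text{const})$, i.e.\ $a$ is not quasiprimary, and the invariance condition \eqref{eq:modeinv} becomes
\[
H_1(v,a_nu)=H_1\bigl((g(a)_{-n}+c\,\delta_{n,0})v,u\bigr),\qquad g(a)=-\si(a)=a,
\]
with a constant $c$ proportional to $\l_t$. On $M(\l_t)$ the form defined by $a_n^\dagger=a_{-n}$ for $n\ne0$ is then consistent. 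The zero mode requires $\ov{\l_t}=-\l_t+c$ (or the analogous relation), which I expect to hold exactly because $\l_t\in\ii\R$; this is the reason for imposing \eqref{condition}. The same mode computation on $V^t(\tilde\rho^\perp)$, where $L_{\tilde\rho^\perp}$ has no linear term, gives $u_n^\dagger=-\si(u)_{-n}$ for $u\in\tilde\rho^\perp$. By the first bullet, invariance on a strong generating set suffices. The form $H=H_1\otimes H_2$ is then invariant for $\mathcal V^k$, and in particular, by restriction via $\Psi_t$ and Lemma \ref{lem:miura}, for $W^k(\g)$ with the involution $\phi_t$.

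\textbf{Positivity.} Choose a basis of $\h^{\si_t}$ that is orthonormal for $-t(\cdot|\cdot)$; this is possible because $t(\cdot|\cdot)<0$ there by Lemma \ref{lem:invol}. For $u$ with $\si(u)=u$ and $-t(u|u)=1$, the adjoint is $u_n^\dagger=-u_{-n}$. Hence
\[
H(u_{-n}w,u_{-n}w)=-H(w,u_nu_{-n}w)=-n\,t(u|u)H(w,w)+\ldots=nH(w,w)>0.
\]
In a PBW basis of monomials the form is therefore diagonal with positive entries $\prod n^{k_n}k_n!$. For the $a$-direction I would instead use $i a$, which is $\si$-fixed with $-t(ia|ia)=1$; this is consistent because $a$ itself is $\si$-antifixed.

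\textbf{Main obstacle.} The delicate step is the sign bookkeeping: the phase in $g$, the non-quasiprimarity of $a$ coming from the $T(a)$ term, and the resulting zero-mode condition on $\l_t$. In particular, I must check that the zero mode relation is compatible with $\ov{\l_t}=-\l_t$ in both cases (1) and (2) of Lemma \ref{lem:invol}. I would verify this first by a direct computation of $L_1a$ and of \eqref{eq:modeinv} for $a$.
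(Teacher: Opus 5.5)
Your proposal is correct and follows essentially the paper's argument: the same splitting $\h=\C a\oplus\tilde\rho^\perp$, the adjoint relations from \eqref{eq:modeinv} with $g(a)=a$ and $g(u)=-u$, positivity from the negativity of $t(\cdot|\cdot)$ on $\h^{\si_t}$, and minimal energy $-\frac12\l_t^2=\frac12|\l_t|^2$. The differences are two. First, the paper quotes \cite[Prop.~6.3]{KMP1}, \cite[Section 5.2]{KMP} and the tensor-product discussion of \cite{KMP} for $H_1$ and $H_2$, whereas you check invariance directly on the strong generators. Second, the zero-mode constant you left open is fixed by $L_1a=-2\l_t\one$: this gives $(v,a_{(0)}w)=(a_{(0)}v,w)-2\ov{\l_t}\,(v,w)$, which is consistent precisely because $\l_t=-\ov{\l_t}$.
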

\begin{proof} By Lemma \ref{lem:invol} and $\Delta_a=1$ we get
$g(a)=e^{-\pi\ii}\si_t(a)=a$, while $g(u)=-u$ for
$u\in\h^{\si_t}\cap\tilde\rho^\perp$.  From \eqref{cx:alam}, $L_1a=-2\l_t\one$
and $L_1u=0$, so \eqref{eq:modeinv} implies (recall that $a_n=a_{(n)}$ and
$u_n=u_{(n)}$, since $a$ and $u$ have conformal weight $1$)
\begin{equation}\label{cx:adj}
(v,a_{(n)}w)=(a_{(-n)}v,w)-2\ov{\l_t}\,\d_{n,0}(v,w),\qquad
(v,u_{(n)}w)=-(u_{(-n)}v,w).
\end{equation}
Both factors of $\mathcal V^k$ carry their own conformal vector, namely
$\frac12:aa:+\l_tT(a)$ and $L_{\tilde\rho^\perp}$, and $\si_t$ preserves each of
them; so, by the discussion of tensor products at the end of
\cite[Section 4]{KMP}, $H=H_1\otimes H_2$ is $\si_t$-invariant as soon as $H_1$
and $H_2$ are.  Both factors are  discussed in the literature.

The factor $H_1$ is \cite[Proposition 6.3]{KMP1}: the irreducible module $M(\mu)$ over
the Heisenberg vertex algebra generated by $\h=\C a$,  $(a,a)=1$,  with Fairlie conformal vector $\frac12:aa:+sT(a)$ carries a
non-zero invariant Hermitian form if and only if $s=\ii\,\Im(\mu)$, which, for $s=\l_t$, is our
condition $\mu\in\l_t+\R$, and \cite[(6.13)]{KMP1} computes it on the monomial
basis, giving in particular its positive definiteness.  Here $\mu=\l_t\in\ii\R$
by \eqref{condition}, in accordance with the fact that the $n=0$ case of
\eqref{cx:adj} reads $\l_t=-\ov{\l_t}$.

The factor $H_2$ is \cite[Section 5.2]{KMP}: $V^{k+h^\vee}(\tilde\rho^\perp)$ is
the vertex algebra of superbosons attached to the purely even space
$\tilde\rho^\perp$ with the bilinear form $t(\cdot|\cdot)$, on which $\si_t$
restricts to a conjugate linear involution; by loc.\ cit.\ the invariant form
exists, the monomial basis is orthogonal, and it is positive definite if and only
if $t(\cdot|\cdot)$ is positive definite on
$\{u\in\tilde\rho^\perp\mid\si_t(u)=-u\}=\ii\,(\h^{\si_t}\cap\tilde\rho^\perp)$,
i.e.\ if and only if $t(\cdot|\cdot)$ is negative definite on
$\h^{\si_t}\cap\tilde\rho^\perp$.  This latter fact is given by Lemma \ref{lem:invol}.  Explicitly, for
an orthogonal basis $u^{(1)},\dots,u^{(\ell-1)}$ of the real vector space
$\h^{\si_t}\cap\tilde\rho^\perp$, the ordered monomials
$a_{(-n_1)}\cdots u^{(j_1)}_{(-m_1)}\cdots v_{\l_t}$ ($n_i,m_r>0$) form an orthogonal
basis of $M_k$; the squared norm of such a monomial is a positive multiple of
$\prod_in_i\cdot\prod_r\(-m_r\,t(u^{(j_r)}|u^{(j_r)})\)$.
\par
Finally
$L_0=\frac12a_{(0)}^2-\l_ta_{(0)}+\sum_{j>0}a_{(-j)}a_{(j)}+(\tilde\rho^\perp\text{-part})$, so
the minimal energy is $-\frac12\l_t^2=\frac12|\l_t|^2\ge0$, the spectrum lies in
$\frac12|\l_t|^2+\Z_{\ge0}$ and all eigenspaces are finite dimensional.
\end{proof}

\endgroup

\begin{theorem}\label{4.1} Let $\g$  be a simple Lie algebra. Then the principal $W$-algebra $W^k(\g)$ is  unitary  for the central charge
\begin{equation}\label{ef}c_{\g,k}\geq m_0=\ell+24((\rho|\rho^\vee)+\|\rho\|\cdot\|\rho^\vee\|).\end{equation}\end{theorem}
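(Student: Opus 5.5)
The plan is to apply Theorem \ref{thm:main} (with trivial grading $\gr$) to $V=W^k(\g)$ equipped with the conjugate linear involution $\phi_t$ of Lemma \ref{lem:miura}, case (1). The unitary positive energy module will be $M_k$ pulled back along $\Psi_t$.

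First, by Lemma \ref{lem:region}, the condition $c_{\g,k}\ge m_0$ with $c_{\g,k}$ real is realized by levels with $t=k+h^\vee\in\R_{<0}$, because $c_{\g,k}$ maps $\R_{<0}$ onto $[m_0,+\infty)$. On the arc $|t|=r_0$ only the endpoint value $m_0$, attained at $t=-r_0\in\R_{<0}$, is reached. Moreover the simple quotient $W_k(\g)$ depends on $k$ only through $c$, up to the finitely many $k$ giving the same $c$. It therefore suffices, for each value $c\ge m_0$, to prove unitarity for one real $k<-h^\vee$ with $c_{\g,k}=c$. So fix $t<0$. Then \eqref{condition} holds, $\l_t\in\ii\R$, and Lemma \ref{lem:invol}(1) provides $\si_t=\ov{(\cdot)}$. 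Lemma \ref{lem:miura} then gives the involution $\phi_t$ of $W^k(\g)$ with $\phi_t(L)=L$ and $\Psi_t\circ\phi_t=\si_t\circ\Psi_t$. By \cite[Theorem 7.4(b)]{KMP} we have $L_1V_1=0$ for $W^k(\g)$, so the normalized $\phi_t$-invariant form on $W^k(\g)$ exists by Theorem \ref{thm:kmp1}.

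Second, by Lemma \ref{lem:fock}, $M_k$ is a positive energy module over $(\mathcal V^k,\Psi_t(L))$ carrying a positive definite $\si_t$-invariant Hermitian form $H$. Restricting through $\Psi_t$ makes $M_k$ a $W^k(\g)$-module. It is positive energy, since the conformal vector of $W^k(\g)$ maps to $\Psi_t(L)$ and the $\R$-grading is preserved. It is also nonzero.

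The invariance of $H$ for $W^k(\g)$ follows because \eqref{eq:inv} for $a\in W^k(\g)$ is exactly \eqref{eq:inv} for $\Psi_t(a)\in\mathcal V^k$. This uses three facts: $\Psi_t$ is a vertex algebra homomorphism, $\Psi_t$ intertwines the $L_1,L_0$ actions, and $\Psi_t\circ g_{\phi_t}=g_{\si_t}\circ\Psi_t$ (the phases in $g$ depend only on conformal weight and parity, which $\Psi_t$ preserves). Hence $A(z)$ is intertwined as well.

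Theorem \ref{thm:main} then gives that the normalized form on $W^k(\g)$ is positive semi-definite, i.e.\ $W^k(\g)$ is unitary in the paper's sense, and its simple quotient $W_k(\g)$ is unitary.

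The main subtlety I expect is the last compatibility check, namely that invariance on the $\mathcal V^k$-module descends through $\Psi_t$ including the $L_1$-twisted terms of \eqref{eq:modeinv}. This also requires identifying $\phi_t$ genuinely as the involution with $\Psi_t\phi_t=\si_t\Psi_t$, which Lemma \ref{lem:miura} supplies. The reduction from ``$c\ge m_0$'' to real $t<0$ also needs care at $c=m_0$, but there the real point $t=-r_0$ exists, so no arc argument is needed.
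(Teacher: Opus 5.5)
Your proposal is correct and is essentially the paper's proof. The paper applies Theorem \ref{thm:main} to the image $\Psi_t(W^k(\g))\subseteq\mathcal V^k$ with the restriction of $\si_t$, and then identifies that image with $W^k(\g)$ by injectivity of $\Psi_t$. You instead pull $M_k$ back along $\Psi_t$ using $\Psi_t\circ\phi_t=\si_t\circ\Psi_t$; this rests on the same injectivity, since Lemma \ref{lem:miura} uses it to construct $\phi_t$.

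You should delete the claim that $W_k(\g)$ depends on $k$ only through $c$. It is not established in general: Remark \ref{s:rem7} obtains it from Feigin--Frenkel duality only for $\g\ne B_n,C_n$, $n>2$. It is also unnecessary. By Lemma \ref{lem:region}, every $k$ with $c_{\g,k}$ real and $\ge m_0$ already has $t\in\R_{<0}$, because on the arc $c\le m_0$, with equality only at $t=-r_0$. Your argument covers every $t<0$.
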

\begingroup
\begin{proof} By Lemma \ref{lem:region}, $c_{\g,k}\ge m_0$ means that $t\in\R_{<0}$,
and then \eqref{condition} holds.  Set $V=\Psi_t(W^k(\g))\subseteq\mathcal V^k$, a
conformal vertex algebra with conformal vector $\Psi_t(L)$, $V_0=\C\one$ and, since 
$V_1=0$, $L_1V_1=0$.  By Lemma \ref{lem:miura}, $\si_t$ restricts to a conjugate linear
involution of $V$ fixing $\Psi_t(L)$, so by Theorem \ref{thm:kmp1}(1) $V$ has a
unique normalized $\si_t$-invariant Hermitian form; by Lemma \ref{lem:fock},
$M_k$ is a non-zero positive energy $V$-module with a positive definite
$\si_t$-invariant Hermitian form.  Theorem \ref{thm:main}, applied with $\gr$
trivial, shows that $V\cong W^k(\g)$ is unitary.   Here we use the injectivity of the map $\Psi$.
\end{proof}
\endgroup
\begin{theorem}\label{4.2} Let $\g$  be a simply laced simple Lie algebra. Then $W_k(\g)$ is a unitary vertex algebra  for
\begin{equation}\label{ef2}c_{\g,k}\geq 	\rank(\g).\end{equation}
\end{theorem}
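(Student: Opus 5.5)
We follow the proof of Theorem \ref{4.1}, now allowing $t$ on the whole unit circle rather than only $t\in\R_{<0}$. By Lemma \ref{lem:region}, for $\g$ simply laced we have $\rho=\rho^\vee$, $r_0=1$, $\theta_0=0$, and condition \eqref{condition} describes exactly the locus where $c_{\g,k}$ is real and $\ge\ell$. This locus consists of $t\in\R_{<0}$, which gives $c\ge m_0$, together with the unit circle $|t|=1$, which gives $c\in[\ell,m_0]$. So if $c_{\g,k}\ge\rank(\g)$, we may choose $t=k+h^\vee$ in one of these two sets. The central charge depends on $t$ only through $t+1/t$, and by Feigin--Frenkel duality (Proposition \ref{prop:FFall}) we have $W_k(\g)\cong W_{k'}(\g)$ with $t'=1/t$. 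Hence it is enough to find one admissible $t$ realizing the given value of $c$ (for $c\ge m_0$ both $t$ and $1/t$ are negative reals). The case $t\in\R_{<0}$ is Theorem \ref{4.1}, and unitarity of $W^k$ passes to $W_k$, since the radical of the normalized form is the maximal ideal. It remains to treat $|t|=1$.

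For $|t|=1$, $t\ne1$, we use case (2) of Lemma \ref{lem:invol}: $\si_t(u)=-\ov t\,\ov u$ is a conjugate linear involution of $V^t(\h)$ fixing $\Psi_t(L)$, with $\si_t(a)=-a$, and $t(\cdot|\cdot)$ is negative definite on $\h^{\si_t}$. Lemma \ref{lem:miura}, which relies on Feigin--Frenkel duality in the free field form \eqref{pMM} valid for all $t$, shows that $\si_t$ preserves $V:=\Psi_t(W^k(\g))$ and induces an involution $\phi_t$ of $W^k(\g)$ fixing $L$. Since $\Psi_t$ is injective (\cite{N}), $V\cong W^k(\g)$, and $V_1=0$ gives $L_1V_1=0$. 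Lemma \ref{lem:fock} provides the nonzero positive energy module $M_k=M(\l_t)\otimes V^t(\tilde\rho^\perp)$ with a positive definite $\si_t$-invariant Hermitian form. It is invariant for the whole of $\mathcal V^k$, hence in particular for the subalgebra $V$. Theorem \ref{thm:main} with trivial $\gr$ then shows that $V$ is unitary, and therefore so is its simple quotient $W_k(\g)$.

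The degenerate point $t=1$ (that is, $c=\ell$, $\tilde\rho=0$) is handled by the convention stated after \eqref{norm}. There $\Psi_1(L)=L^{Sug}_1$, $\l_t=0$, and $\si(u)=-\ov u$; the module is simply the vacuum Heisenberg module, which is unitary for $t(\cdot|\cdot)$ negative definite on $\h^\si=\ii\h_\R$. Hence the same argument applies. The main obstacle is the existence of the involution $\phi_t$ at non-real $t$. This requires Feigin--Frenkel duality at arbitrary non-critical levels, not only generic ones, which is supplied by Proposition \ref{prop:FFall}. Once that is available, the only further check is that the positive energy condition holds, i.e. $L_0$ is bounded below with minimal energy $\frac12|\l_t|^2\ge0$, and this was verified in Lemma \ref{lem:fock}.
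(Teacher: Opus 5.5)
Your proposal is correct and follows the paper's proof. Lemma \ref{lem:region} places $t$ in $\R_{<0}$, which is Theorem \ref{4.1}, or on $|t|=1$, where case (2) of Lemmas \ref{lem:invol}, \ref{lem:miura}, \ref{lem:fock} together with Theorem \ref{thm:main} gives unitarity, and $t=1$ is covered by the convention after \eqref{norm}. Your detour through $t\leftrightarrow 1/t$ and Feigin--Frenkel duality is harmless but unnecessary: for a given $k$ with $c_{\g,k}\ge\rank(\g)$, the value $t=k+h^\vee$ already lies in $\R_{<0}\cup S^1$.
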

\begingroup
\begin{proof} Let $c_{\g,k}$ be real and $\ge\ell=\rank(\g)$.  By Lemma
\ref{lem:region} (with $r_0=1$, $\theta_0=0$) we have $t\in\R_{<0}\cup S^1$ and
\eqref{condition} holds.  If $t\in\R_{<0}$ this is Theorem \ref{4.1}.  If
$|t|=1$, case (2) of Lemmas \ref{lem:invol}, \ref{lem:miura} and \ref{lem:fock}
applies, and the proof of Theorem \ref{4.1} goes through verbatim.  Since $c_{\g,k}$
maps $S^1$ onto the interval
\begin{equation}\label{dadim}	[\ell, m_0]\end{equation}
the unit circle fills exactly the gap left by Theorem \ref{4.1}.
\end{proof}

For $\g=sl_3$ one has $\ell=2$ and $m_0=98$, and Theorem \ref{4.2} is the
unitarity of the vacuum representation of the $W_3$-algebra for $c\ge2$, proved
in \cite{CTW}. The proof in \cite{CTW} also relies on a modification of the free
field realization of Fateev and Zamolodchikov \cite{FZ}, which yields a representation
that is unitary on a suitable invariant subspace; the range $2\le c\le98$
treated there is the interval \eqref{dadim} filled by the unit circle $|t|=1$.

\endgroup
\begin{remark}\label{rem:ACL}\
If $\mathfrak{g}$ is a simply laced simple Lie algebra and $k$ is a
non-negative integer, then by \cite{ACL} the commutant of the diagonally
embedded affine vertex algebra $V_{k+1}(\mathfrak g)$ in $V_1(\mathfrak{g})\otimes V_k(\mathfrak{g})$ is
\begin{equation}\label{eq:ACL}
\operatorname{Com}\bigl(V_{k+1}(\mathfrak{g}),\,V_1(\mathfrak{g})\otimes V_k(\mathfrak{g})\bigr)\;\cong\;
W_{c(k)}(\mathfrak{g}),
\end{equation}
and $V_{k+1}(\mathfrak{g})\otimes W_{c(k)}(\mathfrak{g})\subset V_1(\mathfrak{g})\otimes V_k(\mathfrak{g})$
is a conformal embedding; here $W_{c(k)}(\mathfrak{g})$ denotes the simple principal $W$-algebra with
central charge
\begin{equation}\label{eq:ck}
c(k)=\operatorname{rank}(\mathfrak{g})
\left(1-\frac{h^\vee(h^\vee+1)}{(k+h^\vee)(k+h^\vee+1)}\right).
\end{equation}
Hence these principal $W$-algebras are unitary, being commutants of a unitary subalgebra in the unitary vertex algebra $V_1(\g)\otimes V_k(\g)$ .
It is expected
that these, along with the principal $W$-algebras with central charge
$\ge\operatorname{rank}(\mathfrak{g})$, are all the unitary principal
$W$-algebras. Of course, this is well-known for $\g=sl_2$.
 \end{remark}
The following proposition shows that, if $\g$ is not simply laced, the free
field construction of this section cannot reach the range $\rank(\g)<c<m_0$, at
least with involutions mapping $\h_\R$ onto a multiple of itself.
\begin{proposition}\label{rem:nonsl} Let $\g$ be a simple Lie algebra, let $t\notin\R$ satisfy
\eqref{condition} with $(\tilde\rho|\tilde\rho)/t<0$, and let $\si$ be a
conjugate linear involution of $V^t(\h)$ satisfying \eqref{cx:sigcond} and such
that $\si(\h_\R)=c\,\h_\R$ for some $c\in\C^\times$.  If the Hermitian
form $H_2$ of Lemma \ref{lem:fock} is positive definite, then
$\rho\in\R\rho^\vee$, i.e.\ $\g$ is simply laced.
\end{proposition}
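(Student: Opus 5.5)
We write $\si=\chi M\circ\ov{(\cdot)}$ as in the proof of Lemma \ref{lem:invol}, with $M$ orthogonal for $(\cdot|\cdot)$ and $\chi^2=\ov t/t$. The hypothesis $\si(\h_\R)=c\,\h_\R$ forces $\chi M(\h_\R)=c\h_\R$. Since $M$ is complex linear and orthogonal, comparing norms of real vectors gives that $M':=(\chi/c)M$ maps $\h_\R$ to itself and preserves $(\cdot|\cdot)$ up to the positive scalar $|\chi/c|^2$. Hence $\chi/c\in\R\cdot e^{\ii\cdot 0}$ up to sign; absorbing the sign into $M$, we may take $M$ real orthogonal on $\h_\R$ and $\chi=\epsilon c'$ with $|c'|$ fixed.

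The involutivity $\si^2=\mathrm{id}$ gives $\chi\ov\chi M^2=1$, so $|\chi|=1$ and $M^2=1$. Thus $\h_\R=E_+\oplus E_-$, the $\pm1$ eigenspaces of $M$, and $\h^\si=\psi E_+\oplus\ii\psi E_-$ with $\psi^2=\chi$.

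The next step is to compute the sign of $t(\cdot|\cdot)$ on $\h^\si$. For $u=\psi w$ with $w\in E_+$ we get $t(u|u)=t\chi(w|w)$, and for $w\in E_-$ we get $t(u|u)=-t\chi(w|w)$. Here $t\chi$ satisfies $(t\chi)^2=t\ov t=|t|^2$, so $t\chi=\pm|t|$ is real.

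Positive definiteness of $H_2$ requires, by the criterion recalled in the proof of Lemma \ref{lem:fock}, that $t(\cdot|\cdot)$ be negative definite on $\h^\si\cap\tilde\rho^\perp$. Since $t(\cdot|\cdot)$ has opposite signs on the parts coming from $E_+$ and $E_-$, the space $\tilde\rho^\perp\cap\h^\si$ must meet only one of them. As $\tilde\rho^\perp$ has codimension one, this should force one of $E_\pm$, say $E_-$ after possibly replacing $\chi$ by $-\chi$, to be at most one-dimensional and essentially spanned by the $\si$-direction of $\tilde\rho$. The other eigenspace is then a hyperplane.

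Now we use condition (b), $\chi(\ov tM\rho^\vee-M\rho)=\frac{\ov t}{t}(t\rho^\vee-\rho)$. We decompose $\rho=\rho_++\rho_-$ and $\rho^\vee=\rho^\vee_++\rho^\vee_-$ according to $E_\pm$, and compare real components, using $\chi^2=\ov t/t$ and $t\notin\R$. On $E_+$ we get $\chi(\ov t\rho^\vee_+-\rho_+)=\frac{\ov t}{t}(t\rho^\vee_+-\rho_+)$. Taking real and imaginary parts (the vectors are real, the coefficients complex) yields a $2\times2$ linear system in $\rho_+,\rho^\vee_+$. This system should force $\rho_+$ and $\rho^\vee_+$ to be proportional, or both zero, unless $\chi$ takes a special value. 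The same holds on $E_-$ with $\chi$ replaced by $-\chi$.

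Combining the two pieces, with $\dim E_-\le1$ and $E_-$ related to $\tilde\rho$, one should conclude $\rho\in\R\rho^\vee$. Since for a simple Lie algebra $\rho$ and $\rho^\vee$ are proportional only in the simply laced case (equal root lengths), this completes the argument.

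The main obstacle is the dimension count for $E_\pm$ versus $\tilde\rho^\perp$ and the case analysis on $\chi$ in the linear system. This requires care with the exceptional values of $\chi$ for which the $2\times2$ determinant vanishes. I would handle these by using $|t|=r_0$ (from the converse part of Lemma \ref{lem:invol}) to show that such $\chi$ yields $\tilde\rho\in\C\rho^\vee$, hence again $\rho\parallel\rho^\vee$.
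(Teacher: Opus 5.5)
Your reduction is sound as far as it goes, and it matches the paper's. You write $\si=\chi M\circ\ov{(\cdot)}$ with $M$ real orthogonal, $M^2=\mathrm{id}$ and $t\chi=\pm|t|$ real, and you note the opposite signs of $t(\cdot|\cdot)$ on $\psi E_+$ and $\ii\psi E_-$; this is exactly the paper's $\si=c\,B\circ\ov{(\cdot)}$. Your linear system is also correct: on $E_\pm$, condition (b) becomes $(\chi\mp1)(\ov t\rho^\vee_\pm\pm\chi\rho_\pm)=0$. Moreover $\chi\ne\pm1$ simply because $\chi^2=\ov t/t\ne1$, so no appeal to $|t|=r_0$ is needed.

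The gap is the dimension count, and it comes from never using the hypothesis $(\tilde\rho|\tilde\rho)/t<0$. Negative definiteness on $\h^\si\cap\tilde\rho^\perp$ only shows that the eigenspace of $M$ on which $t(\cdot|\cdot)$ is positive has dimension at most $1$. It does not make that eigenspace ``spanned by the $\si$-direction of $\tilde\rho$''. If it is a line with $\rho_+\ne0\ne\rho_-$, your two relations give $\rho^\vee=-(\chi/\ov t)(\rho_+-\rho_-)$, which is not a multiple of $\rho$.

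This really happens once the sign hypothesis is dropped. Let $\g$ be not simply laced, let $M$ be the reflection of $\h_\R$ with $M\rho=r_0\rho^\vee$, put $\chi=-\ov t/r_0$, and take $t=r_0e^{\ii\theta}$ with $\theta\ne0$, $\cos\theta>\cos\theta_0$. Then (a), (b), $\si^2=\mathrm{id}$ and $\si(\h_\R)=\chi\h_\R$ all hold, and $t(\cdot|\cdot)$ has exactly one positive direction on $\h^\si$. The vector $\tilde\rho/t\in\h^\si$ has $t$-norm $(\tilde\rho|\tilde\rho)/t>0$ by the computation in the proof of Lemma \ref{lem:region}. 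Hence its $t$-orthogonal complement $\h^\si\cap\tilde\rho^\perp$ is negative definite and $H_2$ is positive definite, while $\rho\notin\R\rho^\vee$. So your final ``combining'' step cannot be completed.

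The missing idea is the first step of the paper's proof. By (b), $\si(\tilde\rho/t)=\tilde\rho/t$, so $\tilde\rho/t\in\h^\si$; this is the paper's $\si(a)=-a$, i.e.\ $\ii a\in\h^\si$. Its $t$-norm is $(\tilde\rho|\tilde\rho)/t<0$ by hypothesis. Since $t(\cdot|\cdot)$ is real on $\h^\si$ and $\h^\si\cap\tilde\rho^\perp$ is the $t$-orthogonal complement of $\tilde\rho/t$ in $\h^\si$, you get $\h^\si=\R(\tilde\rho/t)\oplus(\h^\si\cap\tilde\rho^\perp)$. Therefore $H_2$ is positive definite if and only if $t(\cdot|\cdot)$ is negative definite on all of $\h^\si$.

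With your sign computation this forces one of $E_\pm$ to be zero, i.e.\ $M=\pm\mathrm{id}$. Your relation on the surviving eigenspace then reads $\ov t\rho^\vee=\mp\chi\rho$. Taking imaginary parts of this identity between real vectors, and using $\Im t\ne0$, gives $\rho^\vee\in\R\rho$. The paper instead concludes that $\tilde\rho$ is a complex multiple of a real vector, so $\Im(t)\rho^\vee$ and $\Re(t)\rho^\vee-\rho$ are proportional; the two endings are equivalent.
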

(The hypothesis on $\si$ holds in both cases of Lemma \ref{lem:invol},
with $c=1$ for $t$ real and $c=-\ov t$ for $|t|=1$.)
\begin{proof}We first check that $\si(a)=-a$, which follows from
\eqref{cx:sigcond} and the strict inequality in \eqref{condition} alone.  Put
$D=(k+h^\vee)(\tilde\rho|\tilde\rho)=t^2F$, where $F=(\tilde\rho|\tilde\rho)/t<0$,
so that $\sqrt D=t\sqrt F$ with $\sqrt F\in\ii\R$ and $\ov{\sqrt D}=-\ov t\sqrt F$.
Since $a=\tilde\rho/\sqrt D$ by \eqref{norm} and $\si$ is conjugate linear,
$$
\si(a)=\ov{\Bigl(\frac{1}{\sqrt D}\Bigr)}\,\si(\tilde\rho)
=\frac{1}{-\ov t\sqrt F}\cdot\frac{\ov t}{t}\,\tilde\rho
=-\frac{\tilde\rho}{t\sqrt F}=-\frac{\tilde\rho}{\sqrt D}=-a .
$$
Since $\si(a)=-a$ we have $\ii a\in\h^\si$ and
$t(\ii a|\ii a)=-t(a|a)=-1$, and $\h^\si=\R(\ii a)\oplus(\h^\si\cap\tilde\rho^\perp)$;
so $H_2$ is positive definite if and only if $t(\cdot|\cdot)$ is negative definite
on the whole of $\h^\si$.

Write $\si=c\,B\circ\ov{(\cdot)}$ with $B=c^{-1}\si\circ\ov{(\cdot)}$, so
that $B(\h_\R)=\h_\R$; rescaling $c$ we may assume $|c|=1$.  Being the composite of two  linear maps, $B$ is complex
linear, and it preserves $\h_\R$, hence is defined over $\R$.  For $u,v\in\h_\R$
condition \eqref{cx:sigcond}(a) reads $t\,c^2(Bu|Bv)=\ov{t(u|v)}=\ov t\,(u|v)$,
so that $B$ is a dilation of $(\h_\R,(\cdot|\cdot))$ of ratio
$\mu=\frac{\ov t}{tc^2}$, which is real because $B$ is.  Moreover $\si^2=id$
amounts to $B^2=id$, whence $\mu^2=1$; and $\mu=-1$ is impossible, $(\cdot|\cdot)$
being positive definite on $\h_\R$.  Therefore $\mu=1$, that is
$c^2=\frac{\ov t}{t}$ and $B\in O(\h_\R)$.  Let
$\h_\R=\h_\R^+\oplus\h_\R^-$ be the eigenspace decomposition of $B$ and put
$c=e^{2\ii\gamma}$.  Then $u\in\h^\si$ if and only if
$u=e^{\ii\gamma}(w_++\ii w_-)$ with $w_\pm\in\h_\R^\pm$, and
$$
t(u|u)=t\,c\,\bigl[(w_+|w_+)-(w_-|w_-)\bigr].
$$
As $(\cdot|\cdot)$ is positive definite on $\h_\R$, the bracket takes both signs
unless $\h_\R^-=0$ or $\h_\R^+=0$, i.e.\ unless $B=\pm id$.  In either case one
finds $tc<0$, resp.\ $tc>0$, and in both cases
$$
\h^\si=\frac{\ii}{\sqrt t}\,\h_\R .
$$
Finally \eqref{cx:sigcond}(b) says that $\xi:=\frac{\sqrt t}{\ii}\tilde\rho$
satisfies $\ov\xi=\frac{\ov t}{t}\xi$, i.e.\ that $\tilde\rho$ is a complex
multiple of a vector of $\h_\R$.  Writing $\tilde\rho=t\rho^\vee-\rho$ with
$\rho,\rho^\vee\in\h_\R$, this means that $\Im(t)\,\rho^\vee$ and
$\Re(t)\rho^\vee-\rho$ are proportional; as $\Im(t)\ne0$, we get
$\rho\in\R\rho^\vee$.
\end{proof}

\section{The super case}\label{sec:super}\ 
In this section $\g$ is a basic Lie superalgebra with $\g_{\ov 1}\ne0$.
See \cite{KAdv} for the related notation and results.
A nilpotent element in  $\g_{\ov 0}$ is called {\it even} if the
eigenvalues of $2 ad\,x$ on $\g_{\ov 0}$ (resp. $\g_{\ov 1}$)  are even (resp
odd) integers.   
Any even nilpotent element determines a compatible set of positive roots, and its Dynkin characteristic is uniquely determined.
(The Dynkin characteristic is the ordered set of  eigenvalues of $ad\, x$
on simple root vectors; they take values $0, 1/2,$ or $1$, cf. \cite[Theorem 1.2]{EK}.)
Indeed, even vertices of the Dynkin characteristic on which the label is $0$ 
are those simple root vectors on which $ad\, x$ is $0$,
and they are simple root vectors of a reductive Lie algebra $\g_0$;
odd vertices are the lowest weight vectors of the representation of $\g_0$
on $\g_{1/2}$, where $ad\, x$ has eigenvalue $1/2$, and eigenvalues $1$ are on the lowest
weight vectors of $\g_0$ on
$\g_1/[\g_{1/2},\g_{1/2}]$.

Let $\mathfrak s_{pr}=span\{e_{pr},x_{pr},f_{pr}\}\subseteq\g_{\ov 0}$ be  a principal
$sl_2$-subalgebra of $\g_{\ov 0}$, where $x=x_{pr}=\rho^\vee$, the half sum of all positive coroots of $\g_{\ov 0}$. The nilpotent $f_{pr}$ is  even in all cases except for $\g=A(m,n)$ with $m-n$ even, which we exclude from consideration. In all other cases  the set of simple roots $\Pi_x$ is the unique set of simple roots compatible
with the $ad(x)$-grading, so that
\begin{equation}\label{s:values}
\a(x)=1\ (\a\in\Pi_x \text{ even}),\qquad \a(x)=\tfrac12\ (\a\in\Pi_x\text{ odd}).
\end{equation}
Their Dynkin characteristics in all cases are described in \cite[Section 6]{KWfree}.
We have  $\g_0=\h$; by \cite[\S 6.2]{KWfree} this excludes $A(m,n)$ with $m-n$
even.  We keep the notation $t=k+h^\vee$ and
$\tilde\rho=t\rho^\vee-\nu(\rho)$, and  $\rho=\rho_{\ov 0}-\rho_{\ov 1}$ where $\rho_{\ov 0}$ (resp. $\rho_{\ov 1}$) is the   half   sum of positive even (resp. odd) roots 
and $\nu:\h^*\to \h$ is the isomorphism induced by the invariant bilinear form on $\g$. The latter is  normalized 
as in \cite[Section 5.3]{KMP}, by choosing an even highest root $\theta$ of $\g$
and requiring
\begin{equation}\label{s:norm}
(\theta|\theta)=2 ,
\end{equation}
which for  a simple Lie algebra $\g$ is the normalization of Section \ref{sec:W}.
With this choice $h^\vee$, defined by $\kappa_\g=2h^\vee(\cdot|\cdot)$, and the
level $k$ are those of \cite{KRW}, \cite{KWfree}, \cite{KMP}, \cite{KMP1}, whose
formulas we quote.  For instance $(\d_i|\d_j)=\frac12\delta_{ij}$ and
$h^\vee=n+\frac12$ for $osp(1|2n)$; $(\d_i|\d_j)=\frac12\delta_{ij}$,
$(\e|\e)=-\frac12$ and $h^\vee=n$ for $osp(2|2n)$; $(\d|\d)=-(\e|\e)=\frac12$ and
$h^\vee=\frac12$ for $spo(2|3)$.

In this generality the free field realization of Section \ref{sec:W} reads, by
\cite[Corollary 3.1]{KWfree},
\begin{equation}\label{s:FFR}
\Psi:W^k(\g)\to\mathcal V^k=V^{t}(\h)\otimes F(\g_{1/2}),\qquad
\Psi(L)=\frac{1}{t}\Big(\frac12\sum_{j=1}^{\dim\h}:u^ju_j:+T\tilde\rho\Big)+L^F,
\end{equation}
where $F(\g_{1/2})$ is the vertex algebra of free fermions based on $\g_{1/2}$
with the bilinear form $\langle a,b\rangle=(f_{pr}|[a,b])$ and
$L^F=\frac12\sum_{\g_\a\subseteq\g_{1/2}}:T(\Phi_\a)\Phi_\a:$ is its conformal
vector; the central charge is
\begin{equation}\label{s:cc}
c_{\g,k}=c_{free}-\frac{12}{t}(\tilde\rho|\tilde\rho),\quad\text{where }
c_{free}=\rank(\g)+\tfrac12\dim\g_{1/2},
\end{equation}
and the module of Lemma \ref{lem:fock} becomes
$$
M_k=M(\l_t)\otimes V^{k+h^\vee}(\tilde\rho^\perp)\otimes F(\g_{1/2}),\qquad
H=H_1\otimes H_2\otimes H_3,
$$
$H_3$ being the $L^F$-invariant Hermitian form on $F(\g_{1/2})$.  

\begin{remark}\label{s:remF} The factor $F(\g_{1/2})$ plays little role in what
follows, for the following reason.  Its conformal vector
$L^F=\frac12\sum_{\g_\a\subseteq\g_{1/2}}:T(\Phi_\a)\Phi_\a:$ and the bilinear form
$\langle a,b\rangle=(f_{pr}|[a,b])$ do not depend on $k$; hence conditions (a),
(b) of Lemma \ref{lem:invol}, where the whole discussion below takes place,
involve only $\si|_\h$, and $F(\g_{1/2})$ enters only through the discrete
requirement that $\langle\si(\cdot),\cdot\rangle$ be negative definite on
$\g_{1/2}$, i.e.\ that $H_3$ be positive definite. \end{remark}

\begin{lemma}\label{s:lem1} Condition \eqref{condition} holds if and only if
$c_{\g,k}$ is real and $c_{\g,k}\ge c_{free}$.
\end{lemma}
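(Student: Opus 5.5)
The argument is immediate from \eqref{s:cc}. Set $F=(\tilde\rho|\tilde\rho)/t$. Then
\[
c_{\g,k}=c_{free}-12F ,
\]
because $c_{free}=\rank(\g)+\frac12\dim\g_{1/2}$ does not depend on $k$ (Remark \ref{s:remF}) and is a real number.

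Since $c_{free}\in\R$, the number $c_{\g,k}$ is real if and only if $F$ is real. Moreover, for real $F$ we have $c_{\g,k}-c_{free}=-12F$. Hence $c_{\g,k}\ge c_{free}$ holds if and only if $F\le 0$.

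Combining the two observations, condition \eqref{condition}, i.e.\ $F\in\R_{\le0}$, is equivalent to: $c_{\g,k}$ is real and $c_{\g,k}\ge c_{free}$. This is the same computation as the identity $F(t)=-\frac1{12}(c(t)-\ell)$ used in the proof of Lemma \ref{lem:region}, with $\ell$ replaced by $c_{free}$.

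The only point to check is that \eqref{s:cc} is indeed the correct central charge formula in the super setting, with the form normalized by \eqref{s:norm} and $\tilde\rho=t\rho^\vee-\nu(\rho)$. The paper quotes this from \cite{KWfree}, so I take it as given. Note that, unlike in Lemma \ref{lem:region}, we do not need $\rho,\rho^\vee$ to lie in a real form on which the form is positive definite. The equivalence is purely formal.
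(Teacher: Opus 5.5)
Your proposal is correct and follows the paper's own proof, which likewise reads the equivalence off $c_{\g,k}=c_{free}-\frac{12}{t}(\tilde\rho|\tilde\rho)$, using only that $c_{free}$ is real. Your observation that no positivity of the form on a real form of $\h$ is needed here is accurate.
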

\begin{proof} Immediate from $c_{\g,k}=c_{free}-\frac{12}{t}(\tilde\rho|\tilde\rho)$.
\end{proof}

What has to be examined is the second half of the
argument, namely Lemmas \ref{lem:invol}--\ref{lem:fock}.  For each value of
$t$ one needs a conjugate linear involution $\si_t$ of $\mathcal V^k$ such that
$\si_t(\Psi_t(L))=\Psi_t(L)$, $\si_t$ preserves $\Psi_t(W^k(\g))$, and the
Hermitian form $H$ is positive definite; as in Section \ref{sec:W}, $\si_t$ may
depend on $t$.  By Remark \ref{s:remF} we take $\si_t=\si_\h\otimes\si_F$ on
$V^t(\h)\otimes F(\g_{1/2})$, with $\si_\h$ satisfying conditions (a), (b) of
Lemma \ref{lem:invol}.

\begingroup
\begin{remark}\label{s:remscreen}
Conditions (a), (b) in \eqref{cx:sigcond} and the negativity of $t(\cdot|\cdot)$ on $\h^\si$ can be met
for every $\g$ and every $t<0$ satisfying \eqref{condition} strictly: take the
conjugation with respect to a real form of $\h$ that contains $\tilde\rho$ and on
which $(\cdot|\cdot)$ is positive definite. So the relevant constraint is that $\si_t$
preserve $\Psi_t(W^k(\g))$. We explain, in terms of screening operators, which
involutions can be expected to do so. We use the notation of Section
\ref{sub:screen}, with $F=F(\g_{1/2})$ and $\xi=\tilde\rho$, so that
$L_\xi=\Psi_t(L)$ by \eqref{s:FFR}. Since
$\mu(\tilde\rho)=t\mu(x)-(\mu|\rho)$, \eqref{s:Deltaxi} reads
\begin{equation}\label{s:Deltamu}
\Delta(\mu):=\Delta_{\tilde\rho}(\mu)=\frac{(\mu|\mu)}{2t}-\mu(x)+\frac{(\mu|\rho)}{t}.
\end{equation}

 Let $\a\in\Pi_x$ and
$\mu\in\h^*$, $\mu\ne0$. Consider the field 
$$S(z)=\begin{cases}e^\mu(z)\quad&\text{if $\a$ is even,}\\ 
:e^\mu(z)\Phi_\a(z):\quad&\text{if $\a$ is odd,}\end{cases}$$ where $\Phi_\a$ is the field of
$F(\g_{1/2})$ attached to a root vector $e_\a\in\g_{1/2}$; it is primary of
conformal weight $\frac12$ for $L^F$. By Lemma \ref{lem:scr} and
\eqref{s:values}, $S(z)$ is a screening current for $\Psi_t(L)$ if and only if
$\Delta(\mu)=\a(x)$. Since $2(\rho|\a)=(\a|\a)$ for a simple root $\a$,
\eqref{s:Deltamu} gives
$$
\Delta(c\,\a)-\a(x)=(c+1)\Bigl(\frac{c\,(\a|\a)}{2t}-\a(x)\Bigr)\qquad(c\in\C).
$$
So on the line $\C\a$ there are exactly two momenta giving a screening current:
\begin{equation}\label{s:twomom}
\mu_\a=-\a\quad\text{for all }t,\qquad\text{and}\qquad
\mu^\vee_\a=\frac{2t\,\a(x)}{(\a|\a)}\,\a\quad\text{if $\a$ is not isotropic}.
\end{equation}
We denote by $S_\a(z)$, $\mathcal Q_\a$ the screening current and the screening
operator attached to $\mu_\a$, and by $S^\vee_\a(z)$, $\mathcal Q^\vee_\a$ those
attached to $\mu^\vee_\a$, which we call the \emph{dual} ones; when the level
has to be specified we write $\mathcal Q_\a^{(t)}$, $\mathcal Q_\a^{\vee,(t)}$.

The $\mathcal Q_\a$ are the screening
operators of \cite[Thm.~3.9]{G}, which in our notation reads
\begin{equation}\label{s:screen}
\Psi_t\bigl(W^k(\g)\bigr)=\bigcap_{\a\in\Pi_x}\operatorname{Ker}\mathcal Q_\a
\qquad\text{for generic $k$.}
\end{equation}
For every $t\in\C^\times$ the left hand side is contained in the right hand
side, since both depend algebraically on $t$ (see the proof of Proposition
\ref{prop:FFall}), but the inclusion may be strict. Assume now that
$\rho=\l\rho^\vee$ with $\l\ne0$. By Proposition \ref{s:prop2} below, no simple root
is isotropic and $\mu^\vee_\a=\frac t\l\a$ for all $\a\in\Pi_x$. Let $t'=\l^2/t$
and let $\theta_t$ be the isomorphism \eqref{FF:sigma} below. Since
$\theta_t(u)=-\frac\l tu$, by Section \ref{sub:screen}\,(1) it carries $S_\a$ at
level $t'$, of momentum $-\a$, to a multiple of $S^\vee_\a$ at level $t$, of
momentum $\frac t\l\a$. Hence
\begin{equation}\label{s:thetaQ}
\theta_t\Bigl(\bigcap_{\a\in\Pi_x}\operatorname{Ker}\mathcal Q_\a^{(t')}\Bigr)
=\bigcap_{\a\in\Pi_x}\operatorname{Ker}\mathcal Q_\a^{\vee,(t)} .
\end{equation}
Thus Feigin--Frenkel duality, in the form of Proposition
\ref{prop:FFall}(2) below, says that for generic $k$ the two families of screening
operators cut out the same subalgebra:
\begin{equation}\label{s:dualscreen}
\Psi_t\bigl(W^k(\g)\bigr)=\bigcap_{\a\in\Pi_x}\operatorname{Ker}\mathcal Q_\a
=\bigcap_{\a\in\Pi_x}\operatorname{Ker}\mathcal Q^\vee_\a .
\end{equation}

 Let $\si_\h$ satisfy (a) of Lemma
\ref{lem:invol}, so that Section \ref{sub:screen}\,(2) applies. We say that
$\si_t=\si_\h\otimes\si_F$ \emph{permutes the screening operators} if there is a
permutation $\tau$ of $\Pi_x$, preserving the parities and $(\cdot|\cdot)$, such
that $\si_t$ conjugates each $S_\a$, $\a\in\Pi_x$, to a non-zero multiple of
$S_{\tau\a}$ or of $S^\vee_{\tau\a}$. By \eqref{s:screen} and
\eqref{s:dualscreen}, for generic $k$ such an involution preserves
$\Psi_t(W^k(\g))$. We claim that it has the form
\begin{equation}\label{s:sigform}
\si_\h=c\,\tau\circ\ov{(\cdot)},\qquad |c|=1,\quad \tau\in\operatorname{Aut}(\Pi_x).
\end{equation}
Indeed, write $\si_\h=B\circ\ov{(\cdot)}$ with $B$ complex linear. The simple
roots are real, so by \eqref{s:momentum} $\si_t$ maps a field of momentum $-\a$
to a field of momentum $-\frac t{\ov t}B\a$. Comparing with \eqref{s:twomom}, we
get $B\a=\eta_\a\,\tau\a$, where
$$
\eta_\a=\frac{\ov t}{t}\quad\text{if }S_\a\mapsto S_{\tau\a},\qquad
\eta_\a=-\ov t\,\frac{2\a(x)}{(\a|\a)}\quad\text{if }S_\a\mapsto S^\vee_{\tau\a}.
$$
On real vectors (a) of \eqref{cx:sigcond} reads: $t(Bu|Bv)=\ov t(u|v)$; as $\tau$ preserves
$(\cdot|\cdot)$, this gives $\eta_\a\eta_\be=\ov t/t$ whenever $(\a|\be)\ne0$.
If some simple root $\a$ is not isotropic, then $\eta_\a^2=\ov t/t$ and
$\eta_\be=\eta_\a$ for every neighbour $\be$ of $\a$ in the Dynkin diagram, hence
for all $\be\in\Pi_x$, the diagram being connected. If all simple roots are
isotropic only the first case occurs, and again all the $\eta_\a$ coincide.
So $B=c\,\tau$ with $c^2=\ov t/t$, which is \eqref{s:sigform}. The argument also
shows that either $c=1$ and $t\in\R$ (each $S_\a$ is mapped to $S_{\tau\a}$), or
$\frac{2\a(x)}{(\a|\a)}=\frac1\l$ does not depend on $\a$, i.e.\
$\rho=\l\rho^\vee$, and $c=-\ov t/\l$, $|t|=|\l|$ (each $S_\a$ is mapped to
$S^\vee_{\tau\a}$).

We call the involutions of the form \eqref{s:sigform} \emph{of diagram type},
and we restrict to them in Proposition \ref{s:prop4} below; they include the
involutions of Lemma \ref{lem:invol} and of Theorems \ref{s:thm5} and
\ref{s:thm7}. Note that the computation of momenta is valid for every
$t\in\C^\times$; genericity enters only through \eqref{s:screen}. In what
follows, the fact that $\si_t$ preserves $\Psi_t(W^k(\g))$ is never deduced from
\eqref{s:screen} at a level which is not known to be generic: for $t\in\R$ it
follows from the split real form, as in Lemma \ref{lem:miura}, and on the
circles $|t|=\l$ from Proposition \ref{prop:FFall}, which extends
\eqref{s:dualscreen}, in the form $\theta_t(\Psi_{t'}(W^{k'}(\g)))=\Psi_t(W^k(\g))$,
to all non-critical levels.
\end{remark}
\endgroup

We first determine when $\rho$ and $\rho^\vee$ are proportional.

\begin{proposition}\label{s:prop2} The following are equivalent for a basic Lie superalgebra  $\g\ne A(m,n), \ m-n$ even:
\begin{enumerate}
\item $\rho\in\C\rho^\vee$;
\item there is $\l\in\C$ such that $(\a|\a)=2\l$ for every even $\a\in\Pi_x$ and
$(\a|\a)=\l$ for every odd $\a\in\Pi_x$;
\end{enumerate}
and then $\rho=\l\rho^\vee$ and $\tilde\rho=(t-\l)\rho^\vee$.  In particular
$\rho=0$ if and only if $\Pi_x$ is totally isotropic.
\end{proposition}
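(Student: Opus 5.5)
Both conditions can be tested against the basis of simple roots $\Pi_x$. The plan is to expand $\rho$ and $\rho^\vee$ in the dual basis, via $\nu$, and compare coefficients.

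\textbf{Values on simple roots.} For a simple root $\a$ we have $2(\rho|\a)=(\a|\a)$; this identity was already used in Remark \ref{s:remscreen}, and it holds for every basic Lie superalgebra with the convention $\rho=\rho_{\ov 0}-\rho_{\ov 1}$, isotropic roots included. By \eqref{s:values}, $\a(\rho^\vee)=\a(x)$, and this equals $1$ for $\a$ even and $\tfrac12$ for $\a$ odd. The vectors $\nu(\rho)$ and $\rho^\vee$ lie in $\h$, and $\Pi_x$ is a basis of $\h^*$, since we excluded $A(m,n)$ with $m-n$ even, so that $\g_0=\h$ and $\Pi_x$ spans $\h^*$ in the remaining cases. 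Hence an element of $\h$ is determined by its values on $\Pi_x$. Therefore
\[
\nu(\rho)=\l\rho^\vee \iff \tfrac12(\a|\a)=\l\,\a(x)\quad\text{for all }\a\in\Pi_x .
\]
The right-hand side says exactly that $(\a|\a)=2\l$ for $\a$ even and $(\a|\a)=\l$ for $\a$ odd. This proves (1)$\Leftrightarrow$(2), and it shows that the constant $\l$ in (2) is the proportionality constant. It follows that $\tilde\rho=t\rho^\vee-\nu(\rho)=(t-\l)\rho^\vee$.

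\textbf{The case $\rho=0$.} If $\rho=0$, then (1) holds with $\l=0$, so by (2) every simple root has $(\a|\a)=0$. Conversely, suppose every simple root has $(\a|\a)=0$. An even simple root has $(\a|\a)\ne0$, because even roots are non-isotropic for basic Lie superalgebras. So all simple roots are odd and isotropic, (2) holds with $\l=0$, and $\rho=0$. Thus $\rho=0$ if and only if $\Pi_x$ is totally isotropic.

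\textbf{The main point to check.} The only step needing care is that $\Pi_x$ is linearly independent and spans $\h^*$ with $\h=\g_0$. Independence is standard for a set of simple roots. Spanning uses $\g_0=\h$, which was recalled from \cite[\S 6.2]{KWfree}: it is precisely why the case $A(m,n)$ with $m-n$ even is excluded. In the $gl$-type case, where $\h$ might carry a centre, I would simply note that the form is nondegenerate on $\h$, so the argument through $\nu$ still applies.
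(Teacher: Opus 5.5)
Your proof is correct and follows the same route as the paper: you evaluate on $\Pi_x$ using $2(\rho|\a)=(\a|\a)$ and $\a(\rho^\vee)=\a(x)\in\{1,\tfrac12\}$, conclude by nondegeneracy of the form, and read off the totally isotropic case as $\l=0$. Two small points, neither affecting correctness: the detour through the non-isotropy of even roots is unnecessary, since (2) with $\l=0$ already says $\Pi_x$ is totally isotropic; and the fact that $\Pi_x$ spans $\h^*$ does not really come from $\g_0=\h$, but from $\g\neq psl(n|n)$ (where the simple roots are dependent), a case the excluded family $A(m,n)$, $m-n$ even, happens to contain.
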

\begin{proof} For every simple root of a basic Lie superalgebra one has
$2(\rho|\a)=(\a|\a)$, while $(\rho^\vee|\a)=\a(x)$ is given by \eqref{s:values}.
Hence $\rho=\l\rho^\vee$ forces $(\a|\a)=2\l\a(x)$, which is (2); conversely (2)
gives $(\rho-\l\rho^\vee|\a)=0$ for all $\a\in\Pi_x$, whence $\rho=\l\rho^\vee$
by nondegeneracy of the invariant bilinear form.
\end{proof}

\begin{proposition}\label{s:prop3} Let $\g$ be a basic Lie superalgebra, except for $A(m,n),\ m-n$ even.  Then $\rho=\lambda \rho^\vee$ for some $\l\in\C$ exactly in the following cases:
$$
\begin{array}{lll}
\l=1: & \g\text{ is a simply laced simple Lie algebra,}\\[2pt]
\l>0: &\g=osp(1|2n)\ (n\ge1), \l=\tfrac12,\\[2pt]
\l=0: &\g=sl(n+2|n+1),\ osp(2n|2n),\ osp(2n+2|2n),\ D(2,1;a).
\end{array}
$$
\end{proposition}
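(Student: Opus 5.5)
By Proposition \ref{s:prop2}, the task reduces to a case-by-case check of condition (2) on the simple roots in $\Pi_x$. The set $\Pi_x$ is the unique system compatible with the grading by $x=\rho^\vee$: even simple roots have $\a(x)=1$ and odd ones have $\a(x)=\frac12$. The plan is to go through the list of basic Lie superalgebras $\g\ne A(m,n)$, $m-n$ even. For each one I take the Dynkin characteristic of the principal even nilpotent, as tabulated in \cite[Section 6]{KWfree}, read off the parities and squared lengths of the simple roots in $\Pi_x$ under the normalization \eqref{s:norm}, and test whether the ratio $(\a|\a)/\a(x)$ is independent of $\a$. When it is, its common value is $2\l$.

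For Lie algebras all roots in $\Pi_x$ are even and the condition says all simple roots have equal length $2\l$. This gives the simply laced case with $\l=1$, because of $(\theta|\theta)=2$. Non-simply-laced algebras fail the test.

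For superalgebras with $\g_{\ov1}\ne0$, I first split by whether $\Pi_x$ contains a non-isotropic root.

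\emph{Case $\l\neq0$.} Then no root in $\Pi_x$ is isotropic. By Kac's classification, the only basic superalgebras admitting a simple root system with no isotropic odd root are the $osp(1|2n)=B(0,n)$. For $osp(1|2n)$ the distinguished system is $\d_1-\d_2,\dots,\d_{n-1}-\d_n,\d_n$, with $\d_n$ odd non-isotropic. With $(\d_i|\d_j)=\frac12\delta_{ij}$ we get $(\d_i-\d_{i+1}|\d_i-\d_{i+1})=1=2\l$ and $(\d_n|\d_n)=\frac12=\l$. So $\l=\frac12$, and this $\Pi_x$ is indeed the principal one, since it has $x$-values $1$ and $\frac12$.

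\emph{Case $\l=0$.} Then all roots in $\Pi_x$ must be isotropic, so $\Pi_x$ is totally isotropic (the last clause of Proposition \ref{s:prop2}). Conversely, a totally isotropic $\Pi_x$ satisfies (2) with $\l=0$. I would check from the KWfree tables, and from the known classification of totally odd (all-isotropic) simple root systems, which $\g$ have a principal $\Pi_x$ of this form. The answer should be: $sl(n+2|n+1)$ and $sl(n+1|n+2)\cong$ the same algebra (these are $A(m,n)$ with $|m-n|=1$, $m-n$ odd); $osp(2n|2n)$ and $osp(2n+2|2n)$ (the $D(m,n)$ with $m=n$ or $m=n+1$, whose alternating $\e\d$ chains end in a fork of two isotropic roots); and $D(2,1;a)$, whose all-odd system with three isotropic roots is principal.

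For all other superalgebras I have to exhibit a violation of (2). The candidates are $A(m,n)$ with $|m-n|\ge3$ odd, $B(m,n)$ with $m\ge1$, $C(n)$ for $n\ge3$, $D(m,n)$ off the two diagonals, $F(4)$, $G(3)$, and, when it is not already listed, $osp(2|2n)$. In each of these $\Pi_x$ contains both an isotropic root and a non-isotropic root, so $\l$ would have to be zero and nonzero at once. This is visible directly in the principal Dynkin characteristics, e.g.\ a tail of even roots $\e_i-\e_{i+1}$ after the isotropic chain.

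The main obstacle is making this exhaustive check reliable. That means correctly identifying $\Pi_x$, i.e.\ the arrangement of $\e$'s and $\d$'s forced by $x=\rho^\vee$, for each family and every $(m,n)$, including the small-rank coincidences. I would do this by computing $\rho^\vee$ of $\g_{\ov0}$ explicitly and ordering the $\e_i,\d_j$ by their $x$-values.
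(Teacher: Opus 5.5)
Your proposal is correct and follows essentially the paper's route. You reduce to condition (2) of Proposition \ref{s:prop2}. You note that $\l\ne0$ forces a simple system with no isotropic roots, hence $osp(1|2n)$ with $\l=\frac12$, while $\l=0$ forces $\Pi_x$ to be totally isotropic. You then identify $\Pi_x$ family by family by ordering the $\e_i,\d_j$ by their $x$-values, and in every remaining case you exhibit a $\Pi_x$ containing both isotropic and non-isotropic roots. The only difference is that you treat the Lie algebra case ($\l=1$, simply laced) explicitly, which the paper's proof leaves implicit.
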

\begin{proof} If $\Pi_x$ contains an isotropic root then $\l=0$ by Proposition
\ref{s:prop2}, hence every $\a\in\Pi_x$ is isotropic and $\rho=0$; the simple
systems consisting of isotropic roots are classified, and the principal one is of
this kind exactly for the algebras listed in the third line.  For instance for
$\g=sl(m+1|n+1)$ one has $\e_i(x)=\frac m2+1-i$, $\d_j(x)=\frac n2+1-j$, so that
the roots of value $\frac12$ are $\e_i-\d_j$ with $i-j=\frac{m-n-1}{2}$ and
$\d_j-\e_i$ with $i-j=\frac{m-n+1}{2}$; these span $\h^*$ if and only if
$|m-n|=1$, and then every even root of value $1$ is a sum of two of them.  If
$m-n\ge3$ the root $\e_1-\e_2$ is an even element of $\Pi_x$ and (2) fails.  The
same computation for $osp(2m|2n)$, where $\e_i(x)=m-i$ and $\d_k(x)=n-k+\frac12$,
gives a totally isotropic $\Pi_x$ exactly for $m-n\in\{0,1\}$; for $D(2,1;a)$,
$\Pi_x$ consists of the three isotropic roots of value $\frac12$.

If no root of $\Pi_x$ is isotropic, then $\g$ has no isotropic roots at all, so $\g=osp(1|2n)$.  There $\Pi_x=\{\d_1-\d_2,\dots,\d_{n-1}-\d_n,\d_n\}$
with $(\d_i|\d_j)=\frac12\delta_{ij}$ for the normalized form, so
$(\a|\a)=1$ for the even simple roots and $(\d_n|\d_n)=\frac12$: (2) holds with
$\l=\frac12$.  In all the remaining cases $\Pi_x$ contains both an isotropic odd
root and either an even root or a non-isotropic odd root: e.g.\ for
$\g=spo(2|3)=osp(3|2)$ one finds $\Pi_x=\{\d,\ \e-\d\}$ with $(\d|\d)=-(\e|\e)=\frac12$ by \eqref{s:norm}, so that $(\d|\d)=\frac12$ and
$(\e-\d|\e-\d)=0$; for $F(4)$, $\Pi_x=\{\e_2-\e_3\}\cup\{\text{three isotropic
roots}\}$; for $G(3)$, $\Pi_x=\{\e_2-\e_1,\ \d,\ \e_1-\d\}$; for
$C(n)=osp(2|2n-2)$, $n\ge3$, $\Pi_x$ contains $\d_1-\d_2$.
\end{proof}

Assume $\rho=\l\rho^\vee$, so that $\tilde\rho^{\,\perp}=x^\perp$ does not depend
on $t$ and, by Proposition \ref{s:prop2},
$$
\frac{(\tilde\rho|\tilde\rho)}{t}=\frac{(t-\l)^2}{t}(x|x).
$$
If $(x|x)>0$ this is $\le0$ exactly for $t<0$ and, when $\l>0$, on the circle
$|t|=\l$, where it equals $2(\Re t-\l)(x|x)$. In the cases of Proposition
\ref{s:prop3} with $\l\ne0$ one has $\rho(x)>0$, hence, $\rho(x)=\l(x|x)$ being
normalization-free, $(x|x)$ and $\l$ are both positive for the normalization
\eqref{s:norm}; if $\l=0$ and $(x|x)<0$ (which may happen since for $\l=0$ the sign of
$(x|x)$ depends on the choice of $\theta$ in \eqref{s:norm}), the discussion applies
after replacing $t$ by $-t$.   So the picture of Section
\ref{sub:complex} is reproduced verbatim, with the circle $|t|=\l$ in place of
$S^1$ and with Feigin--Frenkel duality in the form $tt'=\l^2$; and $c$ runs over
$[m_{free},+\infty)$ on $\R_{<0}$ and over $[c_{free},m_{free}]$ on the circle, where now
$$
m_{free}=c_{free}+48\,\l\,(x|x).
$$
For $\l=0$ no circle is needed: $t<0$ already gives all the values
$c>c_{free}$.\par
At this point the parameter side of the construction is settled for the
four families of Proposition \ref{s:prop3}: condition \eqref{condition} holds on
$\R_{<0}$, and on the circle $|t|=\l$ when $\l>0$, and $c$ covers the whole range
$c\ge c_{free}$.  For real $t$, on the other hand, no proportionality is
needed.  In all cases what is left is the positivity of the Hermitian form, which
by the following proposition amounts to the existence of a diagram automorphism
correcting the signature of $(\cdot|\cdot)$ on $\h_\R$.
\begin{proposition}\label{s:prop4} Let $t$ satisfy \eqref{condition} with strict
inequality, and let $\si=\si_\h\otimes\si_F$ be a conjugate linear involution of
$\mathcal V^k$ fixing $\Psi_t(L)$, with $\si_\h=c\,\tau\circ\ov{(\cdot)}$,
$|c|=1$, and $\tau$ an automorphism of the Dynkin diagram of $\Pi_x$.  Then
$\tau$ fixes $x$ and $\rho$, $\tau^2=id$, and:
\begin{enumerate}
\item if $t\in\R$ then $c=1$; if $t\notin\R$ then $\rho=\l\rho^\vee$ with $\l>0$,
$|t|=\l$ and $c=-\ov t/\l$;
\item $H_2$ is positive definite if and only if $t\,c\,Q_\tau$ is negative
definite on $\h_\R$, where $Q_\tau(w_++w_-)=(w_+|w_+)-(w_-|w_-)$ for
$w_\pm\in\h_\R^{\tau=\pm1}$;
\item in particular, if $t<0$ or $|t|=\l$, then $t\,c<0$, and $H_2$ is positive
definite if and only if $(\cdot|\cdot)$ is positive definite on $\h_\R^{\tau=1}$
and negative definite on $\h_\R^{\tau=-1}$.
\end{enumerate}
Among the basic Lie superalgebras with $\g_{\ov 1}\ne0$ and $\g_0=\h$, the
condition in {\rm(3)} holds for some $\tau$ exactly for $\g=osp(1|2n)$, with
$\tau=id$, and for $\g=osp(2|2n)$, $n\ge1$, with $\tau$ the automorphism
$\e\mapsto-\e$ induced by $O(2)$, which exchanges the two odd simple roots
$\d_n\pm\e$.
\end{proposition}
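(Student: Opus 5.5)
The proof splits into a structural part, giving (1)–(3), and a case-by-case part, giving the final classification.

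\emph{Structural part.} Since $\si_\h=c\,\tau\circ\ov{(\cdot)}$ is an involution and conjugation commutes with $\tau$ (simple roots are real), $\si_\h^2=|c|^2\tau^2=\tau^2$, so $\tau^2=id$.

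Next I need the behaviour of $\tau$ on $x$ and $\rho$. As an automorphism of $\Pi_x$, $\tau$ preserves parities and $(\cdot|\cdot)$. By \eqref{s:values}, each value $\a(x)$ is determined by the parity of $\a$, so $\tau x=x$. Since $2(\rho|\a)=(\a|\a)$ on simple roots, also $\tau\rho=\rho$. Hence $\tau\tilde\rho=\tilde\rho$.

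I then impose conditions (a) and (b) of \eqref{cx:sigcond}.
\begin{itemize}
\item Condition (a) on real vectors reads $t c^2(\tau u|\tau v)=\ov t(u|v)$, so $c^2=\ov t/t$.
\item Condition (b) reads $c(\ov t x-\rho)=\frac{\ov t}{t}(tx-\rho)$. Comparing the $x$ and $\rho$ components when $\rho\notin\C x$ gives $c=1$ and $t=\ov t$.
\item If instead $\rho=\l x$, condition (b) becomes $c(\ov t-\l)=\frac{\ov t}{t}(t-\l)$. Together with $c^2=\ov t/t$ and $t\notin\R$, this forces $|t|=|\l|$ and $c=-\ov t/\l$. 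Here I reuse the computation of Lemma \ref{lem:invol}(2), with $1$ replaced by $\l$.
\item The strict form of \eqref{condition} on the circle gives $2(\Re t-\l)(x|x)<0$, which forces $\l>0$ (using $\rho(x)>0$, as in the discussion after Proposition \ref{s:prop3}). This gives (1).
\end{itemize}

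For (2), I follow the proof of Proposition \ref{rem:nonsl}. First, $\si(a)=-a$ holds because (a), (b) and strict \eqref{condition} are all that argument used. Hence $H_2$ is positive definite if and only if $t(\cdot|\cdot)$ is negative definite on $\h^\si$. Write $c=e^{2\ii\gamma}$; then $\h^\si=\{e^{\ii\gamma}(w_++\ii w_-)\}$, and on this set $t(u|u)=tc\,Q_\tau(w_++w_-)$.

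For (3), note that $tc=t<0$ for real $t$, while on the circle $tc=-|t|^2/\l<0$. So negative definiteness of $tcQ_\tau$ means $Q_\tau$ is positive definite, which is exactly the stated sign condition.

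\emph{Classification.} I run through basic $\g$ with $\g_{\ov1}\ne0$ and $\g_0=\h$, using the Dynkin diagrams of $\Pi_x$ from \cite[\S6]{KWfree} and $\tau$ of order at most $2$ fixing $x,\rho$. The form $(\cdot|\cdot)$ on $\h_\R$ is indefinite unless $\g_{\ov0}$ has a single simple factor with the form positive on it.
\begin{itemize}
\item For $osp(1|2n)$ the form is positive definite, so $\tau=id$ works.
\item For $osp(2|2n)$ the form is positive on the $\d$'s and negative on $\e$. The flip $\e\mapsto-\e$, exchanging $\d_n\pm\e$, has $-1$-eigenspace $\C\e$, so the condition holds.
\item For the other algebras ($sl(m|n)$, $osp(m|2n)$ with $m\ge3$, $D(2,1;a)$, $F(4)$, $G(3)$), the negative part of the signature has dimension greater than the number of $\tau$-orbits of length $2$ allow. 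Alternatively, the fixed part of $\tau$ contains a vector of negative norm; for instance $x$ or a sum of orbit vectors.
\end{itemize}

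The main obstacle is this last exhaustive check. For it I would compute the dimension of the $-1$-eigenspace of each admissible $\tau$ against the number of negative eigenvalues of $(\cdot|\cdot)$ on $\h_\R$, which is $\rank$ of the negatively normed even factor. I would also verify directly that the $+1$-eigenspace still contains a negative vector in the borderline cases $sl(n+1|n)$ and $D(2,1;a)$.
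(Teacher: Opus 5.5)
\textbf{Parts (1)--(3).} Here you follow the paper's proof. You get $\tau^2=id$ from $\si^2=id$ and $|c|=1$. You get $\tau x=x$ and $\tau\rho=\rho$; your argument via $2(\rho|\a)=(\a|\a)$ is a fine substitute for ``$\tau$ permutes the positive roots''. Condition (a) gives $c^2=\ov t/t$, condition (b) gives $c(\ov t x-\rho)=\ov t x-\frac{\ov t}{t}\rho$, and (2) comes from $\si(a)=-a$ as in Proposition \ref{rem:nonsl}. Two points need repair.

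\begin{enumerate}
\item Your bullets never treat real $t$ when $\rho\in\C x$. This case is immediate: $(c-1)\tilde\rho=0$, and $\tilde\rho\ne0$ by strictness.
\item Strictness of \eqref{condition} on the circle does not force $\l>0$. Since $\l(x|x)=\rho(x)>0$, the numbers $\l$ and $(x|x)$ have the same sign. For non-real $t$ with $|t|=|\l|$ one has $\Re t<\l$ if $\l>0$ and $\Re t>\l$ if $\l<0$. So $2(\Re t-\l)(x|x)<0$ holds for either sign of $\l$. The positivity of $\l$ comes instead from Proposition \ref{s:prop3} with the normalization \eqref{s:norm}: in the super case only $osp(1|2n)$ has $\l\ne0$, and there $\l=\frac12$. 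The sign matters, because (3) uses $tc=-\l<0$.
\end{enumerate}

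\textbf{The classification.} This is the genuine gap: you only outline it, and the outline is wrong in places.

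\begin{enumerate}
\item The correct necessary condition is that the negative index of $(\cdot|\cdot)$ on $\h_\R$ \emph{equals} the number of $2$-cycles of $\tau$ on $\Pi_x$, since that number is $\dim\h_\R^{\tau=-1}$. ``Greater than'' is not the right test: for $sl(m|n)$ with $m>n+1$, the flip has more $2$-cycles than there are negative directions.
\item You must also determine which $\tau$ occur at all. The paper does this by noting that $\tau$ is induced by an automorphism of $\g$ fixing $x$, which is regular in $\g_{\ov0}$, hence by an outer automorphism. So $\tau\ne id$ only for $sl(m|n)$, $osp(2m|2n)$ and $D(2,1;a)$.
\item Your predicted outcome for the borderline family $sl(n+1|n)$ is false at $n=1$. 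For $sl(2|1)$, the flip $(a_1,a_2|b)\mapsto(-a_2,-a_1|-b)$ has $+1$-eigenspace $\{\mathrm{diag}(a,-a|0)\}$, of norm $2a^2>0$. Its $-1$-eigenspace is $\{\mathrm{diag}(a,a|2a)\}$, of norm $-2a^2<0$. So the condition in (3) holds there. This agrees with the statement only because $sl(2|1)\cong osp(2|2)$; you must make that identification rather than list $sl(m|n)$ among the failures.
\end{enumerate}

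The remaining cases are settled, as in the paper, by exhibiting vectors of opposite norms in a single eigenspace:
\begin{itemize}
\item $\tau=id$: whenever $\g_{\ov0}$ has two ideals of opposite signs, i.e.\ for every algebra except $osp(1|2n)$.
\item $sl(m|n)$ with $m,n\ge2$: $\mathrm{diag}(1,0,\dots,0,-1|0)$ and $\mathrm{diag}(0|1,0,\dots,0,-1)$ lie in the $+1$-eigenspace with opposite norms.
\item $sl(m|1)$ with $m\ge4$: the $-1$-eigenspace is not negative definite.
\item $osp(2m|2n)$ with $m\ge2$: $\e_1$ and $\d_1$ lie in the $+1$-eigenspace with opposite norms.
\item $D(2,1;a)$: this case passes your counting test, and is excluded only by $\e_i+\e_j$ and $\e_l$ in the $+1$-eigenspace.
\end{itemize}
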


\begin{proof} Since $\tau$ preserves $\Pi_x$ and the parities, it preserves the
values \eqref{s:values}, hence fixes $x$; it permutes the positive roots, hence
fixes $\rho$; and $\si^2=id$, $|c|=1$ give $\tau^2=id$.  As $\tau$ is an isometry
and $(\cdot|\cdot)$ is real on $\h_\R$, condition (a) of Lemma \ref{lem:invol}
reads $c^2=\ov t/t$, and (b) reads
$c\,(\ov t\rho^\vee-\rho)=\ov t\rho^\vee-\frac{\ov t}{t}\rho$.

(1) For $t$ real, (b) gives $(c-1)\tilde\rho=0$, so $c=1$.  For $t\notin\R$, if
$\rho\notin\C\rho^\vee$ then (b) gives $c=1$ and $c=\ov t/t$, which is absurd,
and the same happens if $\rho=0$; so $\rho=\l\rho^\vee$ with $\l\ne0$,
$c=\frac{\ov t}{t}\,\frac{t-\l}{\ov t-\l}$, and $c^2=\ov t/t$ becomes
$(t-\ov t)(|t|^2-\l^2)=0$, i.e.\ $|t|=|\l|$.  By Proposition \ref{s:prop3}
$\l>0$, and then $c=-\ov t/\l$.

(2) By the computation at the beginning of the proof of Proposition
\ref{rem:nonsl}, $\si(a)=-a$, so that
$\h^\si=\R\,\ii a\oplus(\h^\si\cap\tilde\rho^{\,\perp})$ with $t(\ii a|\ii a)=-1$;
hence $H_2$ is positive definite if and only if $t(\cdot|\cdot)$ is negative
definite on $\h^\si$.  Writing $c=e^{2\ii\psi}$ one has
$\h^\si=e^{\ii\psi}\bigl(\h_\R^{\tau=1}\oplus\ii\,\h_\R^{\tau=-1}\bigr)$, and for
$u=e^{\ii\psi}(w_++\ii w_-)$ one finds $t(u|u)=t\,c\,Q_\tau(w_++w_-)$, since
$(w_+|w_-)=0$.

(3) By (1), $t\,c=t$ for $t$ real and $t\,c=-\l$ on the circle.

For the classification, note that an automorphism of $\Pi_x$ is induced by an
automorphism of $\g$ normalizing $\h$ and fixing $x$, and that, $x$ being regular
in $\g_{\ov 0}$, such an automorphism acts on $\h$ through an outer automorphism
of $\g$.  If $\tau=id$, the condition says that $(\cdot|\cdot)$ is positive
definite on $\h_\R$, which happens only for $osp(1|2n)$: in all the other cases
$\g_{\ov 0}$ has two ideals (simple or one-dimensional) on whose real Cartan
subalgebras the invariant bilinear form takes opposite signs, so that it is
indefinite on $\h_\R$. A non-trivial $\tau$ exists only for $sl(m|n)$, $osp(2m|2n)$ and
$D(2,1;a)$.  For $sl(m|n)$, $\tau$ is the flip $a_i\mapsto-a_{m+1-i}$,
$b_j\mapsto-b_{n+1-j}$ of $h=\mathrm{diag}(a|b)$: if $m,n\ge2$ its
$+1$-eigenspace contains vectors of both signs, while for $n=1$ its
$-1$-eigenspace, on which the form is $\sum_ia_i^2-(\sum_ia_i)^2$ with
$a_i=a_{m+1-i}$, is negative definite only for $m=2$; the case $m=1$ is
symmetric.  For $osp(2m|2n)$, $\tau$ is $\e_m\mapsto-\e_m$ (note that
$\e_m(x)=0$), whose $+1$-eigenspace contains $\e_1,\dots,\e_{m-1}$ and
$\d_1,\dots,\d_n$, of opposite signs, unless $m=1$; for $m=1$ the normalization
$(2\d_1|2\d_1)=2$ gives $(\d_i|\d_j)=\frac12\delta_{ij}$ and $(\e|\e)=-\frac12$,
so that the condition holds.  For $D(2,1;a)$, $\tau$ permutes the $\e_i$
preserving the $s_i$, and a transposition $\e_i\leftrightarrow\e_j$ with
$s_i=s_j$ has $+1$-eigenspace containing $\e_i+\e_j$ and $\e_l$, of opposite signs
since $s_l=-2s_i$.
\end{proof}

\begingroup
In the terminology of Remark \ref{s:remscreen}, the two cases of Proposition
\ref{s:prop4}(1) are distinguished by the action of $\si$ on the momenta
\eqref{s:twomom} of the screening currents. By \eqref{s:momentum},
$\si\cdot\mu= t / \ov t\,\, c\,\tau(\ov\mu)$, so that
\begin{align*}
\si\cdot\mu_\a&=\mu_{\tau\a},& \si\cdot\mu^\vee_\a&=\mu^\vee_{\tau\a}&&(t\in\R,\ c=1),\\
\si\cdot\mu_\a&=\mu^\vee_{\tau\a},& \si\cdot\mu^\vee_\a&=\mu_{\tau\a}&&
(|t|=\l,\ c=-\ov t/\l),
\end{align*}
the dual momenta being considered only for non-isotropic $\a$.
Thus for real $t$ the involution $\si$ permutes the screening currents $S_\a$
among themselves, while on the circle $|t|=\l$ it exchanges them with the dual
currents $S^\vee_\a$. This is why, on the circle, the fact that $\si$ preserves
$\Psi_t(W^k(\g))$ requires Feigin--Frenkel duality, i.e.\ the second equality in
\eqref{s:dualscreen}, whereas on the real line it does not.
\endgroup

\begin{theorem}\label{s:thm5} Let $\g=osp(1|2n)$, $n\ge1$, so that $h^\vee=n+\frac12$, and let $W_k(\g)$ be
its principal simple $W$-algebra (it is  strongly generated by fields of conformal weight $2,4,\ldots,2n,n+\frac{1}{2}$).  Then  $W_k(\g)$ is
unitary for
$$
c_{\g,k}\ \ge\ n+\tfrac12=\rank(\g)+\tfrac12\dim\g_{1/2}.
$$
\end{theorem}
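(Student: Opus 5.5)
We follow the proof of Theorems \ref{4.1} and \ref{4.2}, with the data of Section \ref{sec:super} for $\g=osp(1|2n)$. By Proposition \ref{s:prop3} we have $\rho=\l\rho^\vee$ with $\l=\frac12$, hence $\tilde\rho=(t-\frac12)x$ and $\tilde\rho^\perp=x^\perp$. By Lemma \ref{s:lem1}, the range $c_{\g,k}\ge c_{free}=n+\frac12$ is the range where condition \eqref{condition} holds. By the discussion after Proposition \ref{s:prop3}, this means $t\in\R_{<0}$ or $|t|=\frac12$. Indeed $(x|x)>0$, since $(\cdot|\cdot)$ is positive definite on $\h_\R$ with $(\d_i|\d_j)=\frac12\delta_{ij}$. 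The real half-line gives $c\ge m_{free}$ and the circle gives $c\in[c_{free},m_{free}]$. Moreover $\dim\g_{1/2}=1$, spanned by the root vector of the odd simple root $\d_n$. So $F(\g_{1/2})$ is a single neutral free fermion, and $H_3$ is positive definite for the standard involution $\si_F$, after fixing the sign of $\langle\cdot,\cdot\rangle$ as in Remark \ref{s:remF}.

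\textbf{Step 1: the involution.} We take $\si_t=\si_\h\otimes\si_F$ with $\si_\h$ of diagram type and $\tau=id$, as Proposition \ref{s:prop4} dictates. For $t<0$ we set $\si_\h(u)=\ov u$. For $|t|=\frac12$ we set $\si_\h(u)=-2\ov t\,\ov u$, i.e.\ $c=-\ov t/\l$. The verification of (a), (b) of Lemma \ref{lem:invol} is the same computation as there, with $1$ replaced by $\l=\frac12$. By Proposition \ref{s:prop4}(3) we get $tc<0$. Since $(\cdot|\cdot)$ is positive definite on $\h_\R=\h_\R^{\tau=1}$, the form $H_2$ is positive definite. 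The proof of Lemma \ref{lem:fock} carries over verbatim, tensored with $F(\g_{1/2})$. Hence $M_k=M(\l_t)\otimes V^t(x^\perp)\otimes F(\g_{1/2})$ is a positive energy module with a positive definite $\si_t$-invariant form $H=H_1\otimes H_2\otimes H_3$. The lowest energy is $\frac12|\l_t|^2\ge0$ and the fermion contributes half-integer shifts. The module is untwisted, since the fermion is in the Neveu--Schwarz sector.

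\textbf{Step 2: $\si_t$ preserves $\Psi_t(W^k(\g))$.} This is the main point. For $t<0$ it follows, as in Lemma \ref{lem:miura}, from the split real form of $osp(1|2n)$: the conjugate linear involution of $W^k(\g)$ induced by the split real form is intertwined by $\Psi_t$ with complex conjugation on the free fields. For $|t|=\frac12$ we have $\ov t=\frac1{4t}=t'$, so $\ov k$ is the dual level for $(k+h^\vee)(k'+h^\vee)=\frac14$. Complex conjugation maps $\Psi_t(W^k)$ onto $\Psi_{\ov t}(W^{\ov k})=\Psi_{t'}(W^{k'})$. Composing with the isomorphism $\theta_t$ of \eqref{FF:sigma}, namely $u\mapsto-\frac{\l}{t}u=-2\ov t u$, gives exactly $\si_\h$. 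The duality $\theta_t(\Psi_{t'}(W^{k'}(\g)))=\Psi_t(W^k(\g))$ is needed at all non-critical $t$, not just generic ones. We invoke Proposition \ref{prop:FFall}(2) for this. The obstacle is to make sure that the sign and normalization conventions for $\theta_t$ and the fermion part match, so that $\theta_t$ is the identity on $F(\g_{1/2})$ and $\si_F$ commutes with everything. This check can be done on the generator $L$ as in Lemma \ref{lem:miura}: $\l_{t'}=-\l_t$, so $\theta_t(\Psi_{t'}(L))=\Psi_t(L)$. Then $\si_t$ fixes $\Psi_t(L)$ and is an involution, and by injectivity of $\Psi_t$ it induces $\phi_t$ on $W^k(\g)$.

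\textbf{Step 3: conclusion.} Set $V=\Psi_t(W^k(\g))$. We have $V_0=\C\one$ and $V_{1/2}=V_1=0$, since the generators have weights $2,\dots,2n,n+\frac12$. Hence $L_1V_1=0$, and Theorem \ref{thm:kmp1}(1) gives the normalized $\phi_t$-invariant form. By Theorem \ref{thm:main}, applied with $\gr$ trivial and the module $M_k$, $V$ is unitary, so the simple quotient $W_k(\g)$ carries a positive definite form. If $n$ is odd, the odd generator has weight $n+\frac12$ compatible with parity. If $n$ is even, parity is incompatible with the $L_0$-grading for that generator. In that case Theorem \ref{thm:main} still applies, and by Proposition \ref{previous} no contradiction arises only if the generator vanishes in $W_k$. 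We would double-check this parity point but do not expect it to affect the argument, since Theorem \ref{thm:main} has no parity hypothesis.
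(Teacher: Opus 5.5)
Your overall route is the paper's: the same involutions ($\ov{(\cdot)}$ for $t<0$ and $u\mapsto-2\ov t\,\ov u$ on $|t|=\frac12$), the split real form for $t<0$, Proposition \ref{prop:FFall} on the circle, the module of Lemma \ref{lem:fock}, and then Theorem \ref{thm:main}. However, your argument does not prove the boundary case $c_{\g,k}=n+\frac12$, which is part of the statement.

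On the circle $t=\frac12e^{\ii\theta}$ one has $c=c_{free}+24\l(x|x)(1-\cos\theta)$. So $c=c_{free}$ is attained only at $t=\l=\frac12$ (i.e.\ $k=-n$), where $\tilde\rho=(t-\frac12)x=0$. At that point \eqref{condition} holds with equality, and two things fail:
\begin{itemize}
\item Proposition \ref{s:prop4} does not apply, since it assumes strict inequality, which is what gives $\si(a)=-a$.
\item $a$ and $\l_t$ in \eqref{cx:alam} are undefined, because $\sqrt{t(\tilde\rho|\tilde\rho)}=0$.
\end{itemize}
Your Step 1 treats this point like all the others.

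The paper handles it separately, by the continuity argument of Remark \ref{closed}. The Gram matrices of the invariant forms depend continuously on $c$, and positive semi-definiteness is a closed condition, so unitarity for all $c>c_{free}$ gives it at $c=c_{free}$.

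You could instead treat $t=\frac12$ directly, as the paper does for $t=1$ in the simply laced case. There $\Psi_t(L)$ has no derivative term, and $t=t'$ is self-dual with $\theta_t=-\mathrm{id}$ on $\h$. So $\si_t(u)=-\ov u$ preserves the Miura image, by the split real form together with Proposition \ref{prop:FFall}. The vacuum module $V^{1/2}(\h)\otimes F(\g_{1/2})$ is then positive definite, because $t(\cdot|\cdot)<0$ on $\h^{\si_t}=\ii\,\h_\R$. Equivalently, normalize $a$ as a multiple of $x$ instead of $\tilde\rho$, so that $\l_t=0$ and $M_k$ is the vacuum module. Either way, an extra argument is needed.

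There are also two smaller points.
\begin{itemize}
\item Your final parity discussion is wrong. $n+\frac12$ is a half-integer for every $n$, so for the odd generator $2\Delta+p=2n+2$ is always even. It is therefore always compatible, and Proposition \ref{previous} says nothing about it. That paragraph should be removed rather than ``double-checked''.
\item The sign of $H_3$ cannot be fixed by changing $\langle a,b\rangle=(f_{pr}|[a,b])$, which is given. As in the paper, compose $\si_t$ with the parity automorphism of $\g$. It fixes $\g_{\ov 0}$ pointwise, so it preserves both $\Psi_t(L)$ and the Miura image. This makes $H_3$ positive definite without undoing your Step 2.
\end{itemize}
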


\begin{proof} Here $\Pi_x=\{\d_1-\d_2,\dots,\d_{n-1}-\d_n,\d_n\}$ with
$(\d_i|\d_j)=\frac12\delta_{ij}$ for the normalized invariant bilinear form, so that
$\rho=\sum_i(n-i+\frac12)\d_i=\frac12\rho^\vee$, i.e.\ $\l=\frac12$ in
Proposition \ref{s:prop3}, and $\tilde\rho=(t-\frac12)\rho^\vee$.  Moreover
$\d_i(x)=n-i+\frac12$, so no root vanishes on $x$ and the only root of value
$\frac12$ is $\d_n$: thus $\g_0=\h$, $\g_{1/2}=\C e_{\d_n}$ and
$c_{free}=n+\frac12$.  Finally $(x|x)=(\rho^\vee|\rho^\vee)=\frac{n(4n^2-1)}6$.

\  By \eqref{s:cc}, $c=c_{free}-\frac{12}{t}(t-\frac12)^2(x|x)$.
For $t<0$ the arithmetic-geometric mean inequality gives
$c\ge m_{free}:=c_{free}+48\l(x|x)=n+\frac12+4n(4n^2-1)$, with equality at $t=-\frac12$,
and $c$ runs over $[m_{free},+\infty)$.  On the circle $t=\frac12e^{\ii\theta}$ one has
$c=c_{free}+24\l(x|x)(1-\cos\theta)$, which runs over $[c_{free},m_{free}]$, the value
$c_{free}$ being attained only at $t=\l$, where $\tilde\rho=0$.  So every
$c>c_{free}$ arises from some $t$ with $(\tilde\rho|\tilde\rho)/t<0$.

 Set $\si_t=\ov{(\cdot)}$ for $t<0$ and
$\si_t(u)=-\frac{\ov t}{\l}\ov u$ for $|t|=\l$; in the second case
$t\ov t=\l^2$ gives \eqref{cx:sigcond}(a), and (b) holds because $\rho^\vee$ is
real.  In both cases $\si_t^2=id$, $\si_t(a)=-a$ and $t\,c_t<0$, being $t$,
resp.\ $-\l$.  Since $\g_{\ov 0}=sp(2n)$, the form is positive definite on
$\h_\R$, so $t(\cdot|\cdot)$ is negative definite on $\h^{\si_t}$, as in Lemma
\ref{lem:invol}.

 For $t<0$, $\si_t$ is the conjugation with respect to the
split real form and preserves $\Psi_t(W^k(\g))$ as in Lemma \ref{lem:miura}.  For
$|t|=\l$ one has $\ov t=\frac1{4t}$, so complex conjugation maps
$\Psi_t(W^k(\g))$ onto $\Psi_{1/(4t)}(W^{k'}(\g))$ with
$(k+h^\vee)(k'+h^\vee)=\frac14$, and $\si_t=\theta_t\circ\ov{(\cdot)}$ for
$\theta_t(u)=-u/(2t)$; by Proposition \ref{prop:FFall} (the duality of
\cite{CL}, \cite{G}, extended to all non-critical levels),
$\theta_t\bigl(\Psi_{1/(4t)}(W^{k'}(\g))\bigr)=\Psi_t\bigl(W^k(\g)\bigr)$, so
$\si_t$ preserves $\Psi_t(W^k(\g))$. In the terminology of Remark
\ref{s:remscreen}, $\si_t$ permutes the screening operators with $\tau=id$: for
$t<0$ it maps each screening current $S_\a$ to a multiple of itself, and for
$|t|=\l$ it exchanges $S_\a$, of momentum $-\a$, with the dual current
$S^\vee_\a$, of momentum $2t\a$.

  $F(\g_{1/2})$ is a single neutral free fermion, and the sign
of $\langle a,b\rangle=(f_{pr}|[a,b])$ can be adjusted by composing $\si_t$ with
the parity automorphism of $\g$, which fixes $\g_{\ov 0}$ pointwise, hence
$\Psi_t(L)$ and the Miura image; thus $H_3$ is positive definite.

 By Lemma \ref{lem:fock} the module $M_k$ carries a positive
definite $\si_t$-invariant Hermitian form, and $W^k(\g)_1=0$, the conformal
weights of the generators being $2,4,\dots,2n$ and $n+\frac12$.  As in the proof
of Theorem \ref{4.1}, Theorem \ref{thm:main} gives that $W_k(\g)$ is unitary for every $c>c_{free}$; the value $c=c_{free}$ follows by
Remark \ref{closed} below.
\end{proof}

\begin{theorem}\label{s:thm7} Let $\g=osp(2|2n)$, $n\ge1$, and let $W_k(\g)$ be
its principal simple $W$-algebra (it is strongly generated by fields of conformal weights $1,2,4,\dots,2n$, and $n+\frac12$ (twice)).  Then $W_k(\g)$ is
unitary for all $t=k+h^\vee<0$, that is, for
\begin{align*}
c(W^k(\g)) &\ge \rank(\g)+1+24((\rho|\rho^\vee)+\Vert\rho\Vert \Vert\rho^\vee\Vert 
\\ &=\ n+2+2n(n-1)(4n+1)+2\sqrt2\,n(2n-1)\sqrt{(n-1)(2n+1)}.
\end{align*}
\end{theorem}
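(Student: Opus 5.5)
We follow the proof of Theorem \ref{4.1}, now in the super setting of \eqref{s:FFR}, using only real negative levels $t<0$. For $osp(2|2n)$ we have $\rho\notin\C\rho^\vee$ by Proposition \ref{s:prop3}, so no circle is available, and we work only with $t\in\R_{<0}$.

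The first step is to set up the data. We have $h^\vee=n$, $(\d_i|\d_j)=\frac12\delta_{ij}$ and $(\e|\e)=-\frac12$. The principal $\Pi_x$ consists of $\d_1-\d_2,\dots,\d_{n-1}-\d_n$ together with the two odd roots $\d_n\pm\e$. So $x=\rho^\vee$ lies in the $sp(2n)$ part, with $\d_i(x)=n-i+\frac12$ and $\e(x)=0$. This gives $\g_{1/2}=\C e_{\d_n+\e}\oplus\C e_{\d_n-\e}$ and $c_{free}=(n+1)+1=n+2$. Next we compute $\rho=\rho_{\ov0}-\rho_{\ov1}$, $(x|x)$, $(\rho|\rho)$ and $(\rho|x)$ explicitly. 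Then $c=c_{free}-\frac{12}{t}\|t\rho^\vee-\rho\|^2$, where the norm on the real span of $\rho,\rho^\vee$ is computed with the indefinite form. We minimize over $t<0$ by the arithmetic--geometric mean inequality, exactly as in Lemma \ref{lem:region}. This requires $(\rho|\rho)>0$ and $(\rho^\vee|\rho^\vee)>0$, which we check directly, since $\rho$ has an $\e$-component only through the odd roots and these contributions cancel. The minimum is $c_{free}+24((\rho|\rho^\vee)+\|\rho\|\|\rho^\vee\|)$, and substituting the explicit values should give the displayed closed form. We also check that $c$ sweeps out $[m_0+1,\infty)$ as $t$ ranges over $\R_{<0}$.

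The second step constructs the involution. We take $\si_t=c\,\tau\circ\ov{(\cdot)}$ with $c=1$ and $\tau:\e\mapsto-\e$, as singled out in Proposition \ref{s:prop4}. Since $\tau$ fixes $x$ and $\rho$, conditions (a) and (b) of \eqref{cx:sigcond} hold. By Proposition \ref{s:prop4}(3), $H_2$ is positive definite, because the form is positive on $\h_\R^{\tau=1}=\R\d_i$ and negative on $\h_\R^{\tau=-1}=\R\e$. On the fermionic side, $\tau$ exchanges $e_{\d_n+\e}$ and $e_{\d_n-\e}$. We choose $\si_F$ accordingly, possibly composed with the parity automorphism, so that $\langle\si_F\cdot,\cdot\rangle$ is negative definite on the real span. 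Then $H_3$ is positive definite; this is a $2\times2$ check on a charged fermion pair. To show that $\si_t$ preserves $\Psi_t(W^k(\g))$, we argue as in Lemma \ref{lem:miura}. The map $\tau\circ\ov{(\cdot)}$ is induced by a conjugate-linear involution of $\g$, namely a real form composed with the $O(2)$ element that normalizes $\h$ and fixes $f_{pr}$. By functoriality of the free field realization of \cite{KWfree} under automorphisms fixing $\mathfrak s_{pr}$, this involution intertwines with a conjugate-linear involution $\phi_t$ of $W^k(\g)$ fixing $L$. No duality is needed, because $t$ is real.

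The third step concludes. Lemma \ref{lem:fock}, in its super form with $H=H_1\otimes H_2\otimes H_3$, makes $M_k$ a positive energy module with a positive definite invariant form. The minimal energy is $\frac12|\l_t|^2$ plus nonnegative contributions. Here $W^k(\g)_1$ is spanned by the weight-one generator $J$, so we must verify $L_1V_1=0$, that is $L_1J=0$. In the free field realization $J$ is proportional to $\e+{}$(a fermion bilinear), and $L_1J$ is a multiple of $(\e|\tilde\rho)=-(\e|\rho)=0$. Theorem \ref{thm:main} then yields unitarity of the image and hence of $W_k(\g)$. The boundary value $c=m_0+1$ is attained at an honest $t<0$.

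The main obstacle is expected to be the second step: making precise that $\tau$ lifts to an involution of $\g$ compatible with the Miura map, and choosing the fermionic conjugation so that $H_3$ is positive definite simultaneously with $\si_t^2=\mathrm{id}$.
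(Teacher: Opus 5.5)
Your proposal follows the paper's proof in every essential step. You use the free field module $M_k$ at real $t<0$. You build the involution $\si_\h=\tau\circ\ov{(\cdot)}$ by composing the split conjugation with the $O(2)$ reflection $\e\mapsto-\e$; this reflection fixes $\mathfrak s_{pr}$, so the involution preserves the Miura image without any duality. You get positivity of $H_2$ from Proposition \ref{s:prop4}(3), make $H_3$ positive on the $bc$-pair by a possible parity twist, and conclude with Theorem \ref{thm:main}. The one difference is minor: you check $L_1J=0$ directly in the free field realization, using $(\e|\tilde\rho)=0$, instead of quoting the general fact $L_1W^k(\g)_1=0$.

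One correction concerns $n=1$, i.e.\ $osp(2|2)\cong sl(2|1)$. There $\rho=0$, and $c=3-6t$, which takes only the values $c>3$ for $t<0$. So for $n=1$ the following assertions fail:
\begin{itemize}
\item $\rho\notin\C\rho^\vee$ (in fact $\rho=0\in\C\rho^\vee$);
\item $(\rho|\rho)>0$;
\item the bound is attained at an honest $t<0$.
\end{itemize}
The endpoint $c=3$ corresponds to the critical level $t=0$, where the construction is unavailable. It has to be obtained separately, e.g.\ by the continuity argument of Remark \ref{closed}. For $n\ge2$ your claims are correct: the minimum is attained at $t=-\|\rho\|/\|\rho^\vee\|$.
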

\begin{proof} For the normalized form one has $(\d_i|\d_j)=\frac12\delta_{ij}$,
$(\e|\e)=-\frac12$,
$\Pi_x=\{\d_1-\d_2,\dots,\d_{n-1}-\d_n,\ \d_n-\e,\ \d_n+\e\}$,
$\rho^\vee=\sum_i(2n-2i+1)\d_i$ and $\rho=\sum_i(n-i)\d_i$, so that
$$
(x|x)=\tfrac{n(4n^2-1)}6,\qquad (\rho|\rho^\vee)=\tfrac{n(n-1)(4n+1)}{12},\qquad
\|\rho\|^2=\tfrac{n(n-1)(2n-1)}{12},
$$
and $(\tilde\rho|\tilde\rho)=\frac12\sum_i\bigl((2n-2i+1)t-(n-i)\bigr)^2>0$;
hence \eqref{condition} holds strictly for every $t<0$.  Moreover $\g_0=\h$ and
$\g_{1/2}=\g_{\d_n-\e}\oplus\g_{\d_n+\e}$, so that $c_{free}=n+2$.  The
automorphism $\e\mapsto-\e$ of $\h$ is induced by an element of $O(2)$, which
acts on $\g$ fixing $\mathfrak s_{pr}\subseteq sp(2n)$; composed with the
conjugation with respect to the split real form it gives a conjugate linear
involution $\si$ with $\si_\h=\tau\circ\ov{(\cdot)}$, which preserves
$\Psi_t(W^k(\g))$ and, by Proposition \ref{s:prop4}(3), makes $H_2$ positive
definite. In the terminology of Remark \ref{s:remscreen}, $\si$ permutes the
screening operators: it fixes, up to scalars, the screening currents of the even
simple roots and exchanges those of $\d_n-\e$ and $\d_n+\e$; these two roots
being isotropic, they have no dual screening currents, in accordance with the
fact that only real $t$ occurs.  As for $sl(2|1)$, the two root spaces in $\g_{1/2}$ are isotropic,
exchanged by $\tau$ and paired by $\langle a,b\rangle=(f_{pr}|[a,b])$, so that
$F(\g_{1/2})$ is a $bc$-system and $H_3$ is positive definite, possibly after
composing $\si$ with the parity automorphism.  Since $L_1W^k(\g)_1=0$ \cite[Thm.~7.4(b)]{KMP}, the proof of Theorem \ref{4.1}
applies.  Finally, for $t<0$,
$c=c_{free}+24(\rho|\rho^\vee)-12t(x|x)-\frac{12}t\|\rho\|^2$, whose minimum is
$\rank(\g)+\frac12 \dim\g_{1/2}+24((\rho|\rho^\vee)+\|\rho\|\,\|\rho^\vee\|)$.
\end{proof}

\begin{remark}\label{closed} The  answers of Theorem \ref{s:thm5} and Theorem \ref{s:thm7} agree with what is known about unitarity of the corresponding
superconformal algebras (see \cite{KMP1} and the references therein): the Neveu--Schwarz algebra $W_k(osp(1|2))$ is unitary
for $c\ge\frac32$ (Theorem \ref{s:thm5} with $n=1$, $m_{free}=\frac{27}2$). The
$N=2$ algebra is unitary for $c\ge3$  (Theorem
\ref{s:thm7} with $n=1$).  

The value $c=c_{free}$ is not reached by the construction, for two
different reasons.  For $\g=osp(1|2n)$ it corresponds to $t=\l=\frac12$, i.e.\ to
$k=-n$: there $\tilde\rho=0$, so that
\eqref{condition} holds with equality and $a$, $\l_t$ in \eqref{cx:alam} are not
defined (unlike the case of $osp(2|2)$ below, $c_{free}$ is not approached as $t\to0$, since $c\to+\infty$ as $t\to0^-$).  For $\g=osp(2|2)$,
where $\rho=0$ and $c=3-6t$, it corresponds to $t=0$, i.e.\ to the critical
level, at which $W^k(\g)$ is not a conformal vertex algebra. 

In both cases unitarity still holds at $c=c_{free}$, by continuity.  If
$(V^c)$ is a family of conformal vertex algebras, freely generated by fields of
fixed conformal weights, whose $\l$-brackets and conjugate linear involutions
depend continuously on $c$, then the Gram matrices of the corresponding invariant
Hermitian forms depend continuously on $c$; positive semidefiniteness being a
closed condition, the set of $c$ for which $V^c$ is unitary is closed.  \end{remark}

\begingroup
\begin{remark}\label{s:rem7} It follows from the formula for the central charge
of $W_k(\g,\mathfrak s)$ that this conformal vertex algebra and
$W_{k'}(\g,\mathfrak s)$ have the same central charge if
\begin{equation}\label{scc} (k+h^\vee)(k'+h^\vee)=\frac{\|\rho\|^2}{\|x\|^2}.
\end{equation}
Indeed, the equation for $c$ in \cite[(3.3)]{KWfree} is, up to a factor, a monic
quadratic polynomial in $k+h^\vee$ with constant term
$\frac{\|\rho\|^2}{\|x\|^2}$. By the strange formula, $||\rho||^2= (sdim\,\g) h^\vee /12$, 
\begin{equation}
\|\rho\|^2=0 \iff \text{ either $h^\vee=0$ or $\sdim\,\g=0$.}\label{sf}
\end{equation}
In these, and only in these, cases $c$ is a linear function of $k$. In all other
cases $k\ne k'$ in \eqref{scc}, and it is an interesting open problem to determine
when the corresponding $W$-algebras are isomorphic.
For $\mathfrak s=\mathfrak s_{pr}$ one has $x=\rho^\vee$. Feigin--Frenkel duality
shows that the two $W$-algebras are isomorphic when $\g$ is a simple Lie algebra  different from $B_n,C_n$, $n>2$. For
$osp(1|2n)$ the same holds. Genra proves it via the identification of
$W^k(osp(1|2n))$ with the Fateev--Lukyanov algebra \cite[Thm.~6.4,
Rem.~6.5]{G}. Creutzig and Linshaw prove the isomorphism of the
$\Z_2$-orbifolds \cite[Thm.~4.1, \S4.1]{CL}, and deduce the full duality from a
uniqueness theorem for the odd simple current extension \cite[Rem.~4.1,
Thm.~6.3]{CL}. It is also the case $(X,Y)=(O,B)$, $m=0$, of the dualities of
hook-type $\mathcal W$-superalgebras in \cite[Thm.~4.4, Cor.~4.5]{CLNS}. These results are usually
stated for \emph{generic} $k$, i.e.\ for $k$ outside a countable set. However, in case (2) of Lemma
\ref{lem:miura} and in Theorem \ref{s:thm5} the duality is needed, in
the free field form \eqref{pMM}, at the non-real levels of the circles $|t|=1$
and $|t|=\frac12$, and it is not known whether these levels are generic. For
this reason we do not rely there on the generic statements: we prove in
Proposition \ref{prop:FFall} below that the duality holds at every non-critical
level, and it is this proposition that is used in case (2) of Lemma \ref{lem:miura} and
in Theorem \ref{s:thm5}.
\end{remark}

In the two cases needed above we have $\rho=\l\rho^\vee$ with $\l=1$ ($\g$
simply laced) or $\l=\frac12$ ($\g=osp(1|2n)$), see Proposition
\ref{s:prop3}. For $t\in\C^\times$ put
\begin{equation}\label{FF:sigma}
\begin{gathered}
t'=\sigma(t):=\frac{\l^2}{t},\qquad
\theta_t:V^{t'}(\h)\otimes F(\g_{1/2})\to V^{t}(\h)\otimes F(\g_{1/2}),\\
\theta_t(u)=-\frac{\l}{t}\,u\ \ (u\in\h),\qquad \theta_t|_{F(\g_{1/2})}=\mathrm{id}.
\end{gathered}
\end{equation}
The map  $\theta_t$ is an isomorphism of vertex algebras;
for $\l=1$ it is the map \eqref{cx:theta}. Here $F(\g_{1/2})=\C$ if $\g$ is a Lie
algebra, and $F(\g_{1/2})$ is a single neutral free fermion if $\g=osp(1|2n)$.

\begin{proposition}\label{prop:FFall}
Let $\g$ be a simply laced simple Lie algebra ($\l=1$), or $\g=osp(1|2n)$
($\l=\frac12$). Then for every $t\in\C^\times$:
\begin{enumerate}
\item $\Psi_t:W^k(\g)\to V^t(\h)\otimes F(\g_{1/2})$ is injective;
\item $\theta_t\bigl(\Psi_{t'}(W^{k'}(\g))\bigr)=\Psi_t\bigl(W^k(\g)\bigr)$,
where $k=t-h^\vee$ and $k'=t'-h^\vee$;
\item $\Psi_t^{-1}\circ\theta_t\circ\Psi_{t'}:W^{k'}(\g)\to W^k(\g)$ is an
isomorphism of conformal vertex algebras mapping $L$ to $L$. It therefore induces an
isomorphism $W_{k'}(\g)\cong W_k(\g)$ of the simple quotients.
\end{enumerate}
More generally, for every simple Lie algebra $\g$ the same argument gives
$W^k(\g)\cong W^{k'}(\g^\vee)$ for all non-critical $k$, with
$r^\vee(k+h^\vee)(k'+{}^Lh^\vee)=1$, where $r^\vee$ is the lacety of $\g$.
\end{proposition}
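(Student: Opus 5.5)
The plan is to transfer the generic-level statements to all non-critical levels by an algebraic-dependence argument, using the injectivity of the Miura map \cite{N} and the fact that everything in sight depends polynomially on $t$ (or on $t^{\pm1}$).

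For (1), injectivity of $\Psi_t$ at every non-critical level for $\g$ a simple Lie algebra is the theorem of \cite{N}. For $osp(1|2n)$ I would reprove it along the same lines. $W^k(\g)$ is freely generated by strong generators $J^{(i)}$ of conformal weights $2,4,\dots,2n,n+\frac12$. The image $\Psi_t(J^{(i)})$ has leading part, with respect to the filtration by number of free fields (equivalently, the $t\to\infty$ or Li-filtration associated graded), given by the classical Miura transformation. Injectivity of the associated graded map, i.e.\ the classical Miura map for the principal nilpotent, then gives injectivity for every $t$. This is where I would lean on \cite{N} as much as possible, since the associated-graded statement does not depend on $t$.

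For (2), fix a PBW basis of $W^k(\g)$ in ordered monomials of the strong generators. Both $\theta_t\Psi_{t'}$ and $\Psi_t$ applied to a given monomial are vectors of $V^t(\h)\otimes F(\g_{1/2})$ whose coordinates, in the fixed monomial basis of the free field algebra, are Laurent polynomials in a square root of $t$ (or in $t$ itself). The Miura formulas from the quantum Drinfeld--Sokolov reduction are polynomial in $t$ and $t^{-1}$, and $\theta_t$ rescales by $-\l/t$.

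I then restrict to a fixed conformal weight $\Delta$. The subspaces $U_t=\Psi_t(W^k(\g))_\Delta$ and $U'_t=\theta_t\Psi_{t'}(W^{k'}(\g))_\Delta$ both have dimension $d_\Delta$, the PBW count, for every $t\in\C^\times$ by (1). They are spanned by the columns of matrices $A(t)$ and $B(t)$ with Laurent-polynomial entries, and each matrix has rank $d_\Delta$ everywhere on $\C^\times$.

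Equality $U_t=U'_t$ is equivalent to the vanishing of all $(d_\Delta+1)$-minors of the concatenation $[A(t)\,|\,B(t)]$ taken column by column. These minors are Laurent polynomials in $t$ (or in $\sqrt t$). They vanish for generic $t$ by the known duality (\cite{FF} in the simply laced case; \cite{G}, \cite{CL}, \cite{CLNS} for $osp(1|2n)$), hence on a set with an accumulation point, so they vanish identically. This gives (2) for all $t$.

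For (3), $\Psi_t^{-1}$ exists on the image by (1), and the image statement is (2). So $\Psi_t^{-1}\theta_t\Psi_{t'}$ is a vertex algebra isomorphism. It maps $L$ to $L$ by the explicit check already recorded after Lemma \ref{lem:miura}, namely $\theta_t(\Psi_{t'}(L))=\Psi_t(L)$, and this check carries over verbatim with $\l$ in place of $1$. An isomorphism of conformal vertex algebras maps the maximal ideal onto the maximal ideal, so it descends to the simple quotients. The non-simply-laced variant follows by the same density argument with $\g^\vee$ and the Langlands dual Miura map.

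The main obstacle is the claim that the generic statement is really generic in the sense of a Zariski-dense set in the parameter, together with the algebraic dependence of $\Psi_t$ on $t$. I would handle this by writing the Miura generators explicitly as normally ordered polynomials in the $u_{(-n)}$ with coefficients in $\Z[\l,t,t^{-1}]$. I would also make sure that the duality in the literature is stated for $k$ outside a countable set (any uncountable set suffices), and that the branch of $\sqrt t$ is handled by working with the variable $s=\sqrt t$.
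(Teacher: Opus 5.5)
Your proposal follows the paper's proof. It uses injectivity of the Miura map from \cite{N}, which already covers $osp(1|2n)$, so no re-proof via an associated graded is needed. It then extends the generic free-field duality to every $t\in\C^\times$ in each fixed conformal weight, using Laurent-polynomial dependence on $t$ and full rank everywhere by (1); your identical vanishing of the $(d_\Delta+1)$-minors of $[A(t)\,|\,B(t)]$ is just the elementary form of the paper's statement that two morphisms $\C^\times\to\Gr(N,\mathfrak V_\Delta)$ agreeing on an infinite set coincide. It ends with the check $\theta_t(\Psi_{t'}(L))=\Psi_t(L)$.

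Two points need tightening. First, the generic input must be the equality of images in free-field form, valid when both $t$ and $t'=\l^2/t$ are generic; this is the screening description for \cite{FF} and the Fateev--Lukyanov identification of \cite{G}, whereas the abstract isomorphisms of \cite{CL}, \cite{CLNS} do not give it. Second, the $t$-independent PBW basis with Laurent-polynomial Miura images should be taken from Genra's free $\C[y]$-family, $y=1/t$, rather than from explicit Miura generators, which are not available in all types.
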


\begin{proof} Since $\g_0=\h$, one has $\kappa_{\g_0}|_\h=0$ and, $\kappa_\g$ being
$2h^\vee(\cdot|\cdot)$, the invariant bilinear form $\tau_k$ of \cite[(2.2)]{G} restricts on $\h$ to
$(k+h^\vee)(\cdot|\cdot)=t(\cdot|\cdot)$. Hence $V^{\tau_k}(\g_0)=V^t(\h)$, and
$\Psi_t$ is the Miura map
$\mu:W^k(\g)\to V^{\tau_k}(\g_0)\otimes F(\g_{1/2})$ of \cite[\S5]{G}, \cite[Corollary 3.1]{KWfree}
\cite[\S2.2]{N}. It is injective for every $k\in\C$. For simple Lie algebras this
is due to Frenkel and Arakawa (see \cite{Ar} and the references therein); for basic Lie superalgebras, an arbitrary
even nilpotent and an arbitrary good grading, including half-integral ones with
$\g_{1/2}\ne0$ as for $osp(1|2n)$, it is \cite[Thm.~14]{N}. This proves (1).

We now discuss the duality at generic levels, i.e. the following statement: there is a set $S\subset\C^\times$
with countable complement such that (2) holds for $t\in S$. For $\g$ simply
laced this is the generic Feigin--Frenkel duality \cite{FF}, which, in the
notation of Remark \ref{s:remscreen}, reads as follows: for generic $t$ and every
simple root $\a$, the screening operator $\mathcal Q_\a$ and the dual screening operator
$\mathcal Q^\vee_\a$ have the same kernel in $V^t(\h)$ (this is a rank one statement: both
kernels are the tensor product of the Heisenberg vertex algebra of $\a^\perp$ and
of the Virasoro vertex algebra of $\C\a$ for which $S_\a$ and $S^\vee_\a$ have
conformal weight $1$). Hence, if $t$ and $t'$ are generic, \eqref{s:screen} at
the level $t'$, \eqref{s:thetaQ}, and \eqref{s:screen} at the level $t$ give
$$
\theta_t\bigl(\Psi_{t'}(W^{k'}(\g))\bigr)
=\bigcap_{\a\in\Pi}\operatorname{Ker}\mathcal Q^{\vee,(t)}_\a
=\bigcap_{\a\in\Pi}\operatorname{Ker}\mathcal Q^{(t)}_\a=\Psi_t\bigl(W^k(\g)\bigr).
$$
For $\g=osp(1|2n)$ it is \cite[Lemma~6.3, Thm.~6.4,
Rem.~6.5]{G}. For $k$ generic in the sense of \cite[Def.~4.5]{G} (a set with
countable complement by \cite[Lemma~4.4]{G}), $\Psi_t(W^k(\g))$ is identified, via
$V^t(\h)\cong H$, $u\mapsto\sqrt t\,\hat u$, with the Fateev--Lukyanov algebra
$\mathcal WB_n(\gamma)\subset H\otimes F$. This algebra is generated by an odd
field $G_\gamma$ depending only on $\gamma=\sqrt{2t}-\frac1{\sqrt{2t}}$.
Choosing $\sqrt{2t'}=-1/\sqrt{2t}$ leaves $\gamma$ unchanged. The composite
$V^{t'}(\h)\cong H\cong V^t(\h)$ is then $u\mapsto\frac{\sqrt{t'}}{\sqrt t}u
=-\frac1{2t}u=\theta_t(u)$. So (2) holds whenever $k$ and $k'$ are both generic; by \eqref{s:screen}
and \eqref{s:thetaQ} this amounts to \eqref{s:dualscreen}, the two families of
screening operators being the two screening systems of the Fateev--Lukyanov
algebra \cite{FL}.
The set $S$ of such $t$ is the intersection of two sets with countable complement,
since $\sigma$ is a bijection of $\C^\times$; hence it has countable complement.

To upgrade the result from the generic case to all non-critical  values of $k$ we need to introduce an algebraic family of vertex algebras. Let $y=1/t$. By \cite[\S5]{G} the
$W$-algebras form a family $W_y(\g)$ over $\C[y]$. It is a free $\C[y]$-module,
freely strongly generated by fields of fixed conformal weights, with
$W^k(\g)=W_y(\g)\otimes_{\C[y]}\C_{1/t}$ for $t\ne0$. The Miura map is the
specialization of a $\C[y]$-linear map $\mu_y$ \cite[(5.2), Lemma~5.5]{G}. The
latter is written in terms of the rescaled currents $\bar J^u=y\,J^u$. Passing to
$J^u$ introduces only powers of $y^{\pm1}$. Let $\mathfrak V$ be the vector space
underlying $V^t(\h)\otimes F(\g_{1/2})$, spanned by the PBW monomials in the modes
$u_{(-n)}$, $u\in\h$, and $\Phi_{(-m)}$. As a graded vector space it does not
depend on $t$, and its conformal weight spaces $\mathfrak V_\Delta$ are
finite-dimensional. Fix $\Delta$, put $N=\dim W^k(\g)_\Delta$ (independent of $k$)
and $D=\dim\mathfrak V_\Delta$, and choose a basis of the free
$\C[t^{\pm1}]$-module $W_y(\g)_\Delta\otimes\C[t^{\pm1}]$. In this basis
$\Psi_t|_{W^k(\g)_\Delta}$ is a $D\times N$ matrix $M_\Delta(t)$ with entries in
$\C[t^{\pm1}]$. By (1),   it has rank $N$ for every $t\in\C^\times$. Hence its
$N\times N$ minors never vanish simultaneously, and
\[
\mathcal U_\Delta:\C^\times\to\Gr(N,\mathfrak V_\Delta),\qquad
t\mapsto\Psi_t\bigl(W^k(\g)\bigr)_\Delta ,
\]
is a morphism of varieties.
 On $\mathfrak V$, $\theta_t$ is the linear operator
multiplying a PBW monomial of degree $d$ in the $\h$-modes by $(-\l/t)^d$. Since
$\sigma$ is an automorphism of $\C^\times$,
$t\mapsto\theta_t\bigl(\mathcal U_\Delta(\sigma(t))\bigr)$ is again a morphism
$\C^\times\to\Gr(N,\mathfrak V_\Delta)$. By the generic case,  the two morphisms agree on
$S$, which is infinite. As $\C^\times$ is an irreducible curve and $\Gr(N,D)$ is
separated, they agree everywhere. This holds for every $\Delta$, which proves (2).

 By (1) and (2), $\Psi_t^{-1}\circ\theta_t\circ\Psi_{t'}$
is a well defined isomorphism of vertex algebras. It maps $L$ to $L$. Indeed,
write $\tilde\rho_t=t\rho^\vee-\rho=(t-\l)\rho^\vee$ and use $tt'=\l^2$. The
Sugawara part is preserved because $(\l/t)^2/t'=1/t$. For the derivative term,
\[
\theta_t\Bigl(\tfrac1{t'}T\tilde\rho_{t'}\Bigr)
=-\tfrac{\l}{tt'}(t'-\l)\,T\rho^\vee
=\bigl(1-\tfrac{\l}{t}\bigr)T\rho^\vee=\tfrac1t\,T\tilde\rho_t ,
\]
and $L^F$ is untouched, so $\theta_t(\Psi_{t'}(L))=\Psi_t(L)$ by \eqref{s:FFR}.
An isomorphism of the universal $W$-algebras maps the maximal ideal onto the maximal
ideal, and hence descends to the simple quotients.

For a simple Lie algebra $\g$, identify the Cartan subalgebra of $\g^\vee$ with $\h$ so that the roots of
$\g^\vee$ correspond to the coroots of $\g$. Then the normalized invariant bilinear form of $\g^\vee$
is $\frac1{r^\vee}$ times the induced one. The above proof scheme applies verbatim with
$\theta_t(u)=-u/(r^\vee t)$ and $t'=1/(r^\vee t)$, using  the generic
Feigin--Frenkel duality \cite{FF}, proving the last assertion. \end{proof}

\begin{remark}\label{rem:FFlevels} The $osp(1|2n)$ duality is available in the
literature for the following ranges of levels. \cite[Thm.~4.1]{CL} is an
isomorphism of one-parameter vertex algebras, i.e.\ it holds for generic levels.
The lift from the orbifolds in \cite[Rem.~4.1]{CL} rests on
\cite[Thm.~6.3]{CL}, whose proof in this case is left to the reader. The results
of \cite{CLNS} are stated for $k\notin\Q$. For $m=0$ the relative
semi-infinite cohomology of \cite{CLNS} is trivial, so the input they need is
again the orbifold duality together with the uniqueness of the extension.
Genra's proof \cite[Thm.~6.4]{G} is complete at generic levels, and its
extension to all levels requires the injectivity of the Miura map, which for Lie
superalgebras is \cite[Thm.~14]{N}. Proposition \ref{prop:FFall} covers all
non-critical levels uniformly.
\end{remark}

\endgroup

\begin{remark}\label{s:remGN} Gepner and Noyvert \cite{GN} studied unitarity of
the superconformal algebra $SW(\frac32,2)$. According to \cite{RD}, this conformal vertex algebra is 
isomorphic to the principal $W$-algebra $W^k(osp(3|2))$. In this case $c$
depends linearly on $k$, namely $c=18k+6$. According to \cite{GN}, this conformal
vertex algebra is unitary iff
$$
c=6+\frac{18}{p}\qquad\text{or}\qquad c=6-\frac{18}{p+1},\qquad p=3,4,\dots,\text{or $c=6$}
$$
Thus the values of $c$
for which the principal $W$-algebra of $osp(3|2)$ is unitary form an infinite
set,  accumulating at  $c=6$.
\end{remark}

\section{A generic non-unitarity result}

Let $\g$ be a basic Lie superalgebra, possibly a simple Lie algebra, and let
$\mathfrak s=span\{e,x,f\}\subseteq\g_{\ov 0}$ be an $sl_2$-subalgebra.  As in Section \ref{sec:super}, $f$, or $\mathfrak s$, is called
even if the eigenvalues of $2\ad x$ on $\g_{\ov 0}$ (resp.\ $\g_{\ov 1}$) are
even (resp.\ odd) integers; for $\g$ a Lie algebra this is the usual notion.
Denote by $W_k(\g,\mathfrak s)$ the simple $W$-algebra attached to $\g, \mathfrak s$ and $k\in \C$. Let $\g^\natural$ be the centralizer of $\mathfrak s$ in $\g$. No normalization of $(\cdot|\cdot)$ has to be fixed in this section.

\begin{remark} \label{lem:even}
The following facts are easy to prove. \par (1). 
$\mathfrak s$ is even if and only if all the strong generators $J^{\{v\}}$,
$v\in\g^f$ homogeneous, of $W^k(\g,\mathfrak s)$ are compatible, i.e.\ have
integral conformal weight if $v$ is even and half-integral conformal weight if $v$
is odd. \par
(2). If $\mathfrak s$ is even, then $\g_0\subseteq\g_{\ov 0}$ and
$\g_{1/2}\subseteq\g_{\ov 1}$; in particular $\g_0$ and $\g^\natural$ are
reductive Lie algebras.

\end{remark}

\begin{theorem}\label{prop:gen} (a).  Let $\g$ be  a simple Lie algebra, let the $sl_2$-subalgebra $\mathfrak s$ be even and suppose that
$\g^\natural$ is not abelian. Then $W_k(\g,\mathfrak s)$ can be unitary for at most a finite
number of $k$.\par 
(b).The same conclusion holds when $\g$ is a basic Lie superalgebra, provided the following condition holds: $(x|x)(\a|\a)>0$ for a root $\a$ of $\g^\natural$.\end{theorem}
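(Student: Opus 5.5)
The plan is to use the affine vertex subalgebra of $W^k(\g,\mathfrak s)$ generated by the fields $J^{\{v\}}$, $v\in\g^\natural$. Since $\mathfrak s$ is even, $\g^\natural$ is reductive (Remark \ref{lem:even}). Its simple ideals $\g^\natural_i$ give affine vertex algebras $V^{k_i}(\g^\natural_i)$ inside $W^k(\g,\mathfrak s)$. By \cite{KW} (the formula for $[J^{\{a\}}{}_\l J^{\{b\}}]$), the levels are linear functions $k_i=k+c_i$ of $k$, with $c_i$ depending only on $\g,\mathfrak s$. We may assume the conjugate linear involution $\phi$ preserves $\g^\natural$ and fixes $L$, so that $\phi$ restricts to a conjugate linear involution of $\g^\natural$ (and of each $\g^\natural_i$ up to permutation).

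Step 1: unitarity forces a quantization of $k_i$. Suppose $W_k(\g,\mathfrak s)$ is unitary. The image of $V^{k_i}(\g^\natural_i)$ in $W_k$ is an affine vertex algebra whose invariant form is the restriction of a positive definite one, hence positive definite. Take a root $\a$ of $\g^\natural_i$ with root vectors $e_{\pm\a}$ and the corresponding coroot. Since $\g^\natural_i$ is not abelian (part (a)), such a root exists. For $a=J^{\{e_\a\}}$ of weight $1$, compute $(a,a)$ via \eqref{eq:top}: it equals a nonzero constant times $k_i(\a|\a)$ up to a phase determined by $\phi$. More decisively, positivity on $V^{k_i}(\g^\natural_i)$ forces the restriction of the form to the $sl_2$-triple of $\a$ to be the compact one. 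Then the standard argument on the $sl_2$ affine subalgebra (vectors $(e_{-\a})_{(-1)}^N\one$, or equivalently the integrable-highest-weight criterion) forces $k_i\cdot\frac{2}{(\a|\a)}\in\Z_{\ge0}$. For part (b), the hypothesis $(x|x)(\a|\a)>0$ is exactly what is needed to ensure the affine $sl_2$ sits with the correct sign relative to the compact form compatible with $L$. Without it, the form on $\h$ restricted to $\C\a^\vee$ could have the wrong signature relative to the positivity forced on the Virasoro part.

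Step 2: bound $k_i$ from above by the central charge. Unitarity also requires the coset (commutant) of the affine subalgebra to be unitary, so $c(W_k)-c_{\mathrm{Sug}}(k_i)\ge0$ summed appropriately, and all conformal weights are nonnegative. More concretely, since $L-L^{\mathrm{Sug}}$ must give a unitary Virasoro-type structure, the generators of $W^k$ of conformal weight $\frac12$ or those in $\g_{-1/2}$-directions impose norms that are rational functions of $k$. I would compute the norm of a suitable weight-$\frac32$ or weight-$2$ quasiprimary generator $J^{\{v\}}$, $v\in\g^f_{-1}$ or $\g^f_{-1/2}$, transforming nontrivially under $\g^\natural$. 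Its norm is a rational function of $k$ whose sign is eventually fixed. It must be negative for $k_i\gg0$ (the leading term behaves like $-(x|x)$ times a positive quantity, again using the sign hypothesis). This contradicts positivity for all but finitely many of the quantized values. Combining, $k$ lies in the finite set $\{k: k+c_i\in\frac{(\a|\a)}{2}\Z_{\ge0},\ k\le K\}$.

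The main obstacle is Step 2: finding a specific vector whose norm is negative for large $k_i$, uniformly for all even $\mathfrak s$ with nonabelian $\g^\natural$. I would first try the norm of $G=J^{\{v\}}$ with $v$ a lowest weight vector for $\mathfrak s$ in an isotypic component of $\g^f$ on which $\g^\natural$ acts nontrivially. I would compute $g(G)_{(2\Delta-1)}G$ from the $\l$-brackets in \cite{KW} to leading order in $k$, and compare its sign with that of $k_i$.
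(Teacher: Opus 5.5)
Your Step 1 matches the first half of the paper's proof. The level of a simple component $\mathfrak a$ of $\g^\natural$ is $ak+b$ with $a$ of the sign of $(\a|\a)$, and unitarity forces it into $\Z_{\ge0}$. So $k$ lies in a discrete set that is bounded on one side only.

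You should still treat the cases hidden in ``up to permutation''. If $\mathfrak a$ dies in $W_k(\g,\mathfrak s)$, its level is $0$. If its image is non-zero but not $\phi$-stable, then \eqref{eq:top} and the orthogonality of distinct components give $(J^{\{v\}},J^{\{v\}})=0$, contradicting positivity.

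The genuine gap is Step 2: you never obtain the bound on $k$ in the opposite direction. You mention $c-c_{\mathrm{Sug}}\ge0$ but do not use it. Instead you switch to the norm of some charged generator of weight $\tfrac32$ or $2$, which you never compute, and then concede that finding such a vector is the main obstacle. The constant $K$ in your final finite set is therefore never produced.

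The missing observation is that the vector you need is $L$ itself: $(L,L)=\langle L_{(3)}L\rangle=c/2\ge0$. By the Kac--Roan--Wakimoto central charge formula, using that $\g_0$ is even and $\g_{1/2}$ odd for even $\mathfrak s$,
\[
c(\g,\mathfrak s,k)=\dim\g_0+\tfrac12\dim\g_{1/2}-\tfrac{12}{k+h^\vee}\|\rho-(k+h^\vee)x\|^2=\frac{Q(k)}{k+h^\vee},
\]
where $Q$ is quadratic with leading coefficient $-12(x|x)$. Suppose $(x|x)>0$ and $(\a|\a)>0$. For $k>-h^\vee$, the inequality $Q(k)\ge0$ with $Q$ concave confines $k$ to a bounded interval. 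For $k<-h^\vee$, the integrality of the level gives $k\ge -b/a$. The case where both signs are negative is symmetric, and discreteness then gives finiteness.

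This is exactly where the hypothesis $(x|x)(\a|\a)>0$ enters. It makes the one-sided bound from integrality (direction set by the sign of $(\a|\a)$) and the one from $c\ge0$ (direction set by the sign of $(x|x)$) point in opposite directions. It has nothing to do with the signature of the affine $sl_2$, as you suggest in Step 1: integrality of the level holds regardless. In part (a) the hypothesis is automatic.

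Your coset inequality would also have worked, since $c_{\mathrm{Sug}}\ge0$ at non-negative integer levels. However, it requires checking that $L-L^\natural$ is quasiprimary and $\phi$-fixed, whereas $(L,L)=c/2$ is immediate.
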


\begin{proof} Suppose that  $W_k(\g,\mathfrak s)$ is unitary. Since $\g^\natural$ is not abelian, $W_k(\g,\mathfrak s)$ has an affine subalgebra with bracket
$$[J^{\{v\}\lambda} J^{\{v'\}}] = J^{\{[v,v']\}} + \lambda \left( k(v|v') + \frac{1}{2}\big(k_{\mathfrak{g}}(v,v') - k_{\mathfrak{g}_0}(v,v') - k_{1/2}(v,v')\big) \right),$$
provided that $v,v'\in \g^\natural$, where $k_{1/2}$ is the supertrace form of $\g_0$ on $\g_{1/2}$ (cf. \cite{KRW}).

Consider a simple component $\mathfrak a$ of $\g^\natural$ and let
$(\cdot|\cdot)_{\mathfrak a}$ be the restriction to $\mathfrak a$ of the invariant bilinear
form of $\g$. The level of the affine subalgebra corresponding to $\mathfrak a$  is  $k_{\mathfrak a}=ak+b$, 
and the sign of $a$ is the sign of $(\a|\a)$ (the normalized form of $\mathfrak a$ being positive definite on the real span
of its roots); in particular $a\ne0$.
Let $\phi$ be the conjugate linear involution with respect to which
$W_k(\g,\mathfrak s)$ is unitary. The space of vectors of conformal weight $1$ of
$W_k(\g,\mathfrak s)$ is a quotient of $\g^\natural$, on which $\phi$ acts as a
conjugate linear involution of Lie algebras. If the image of $\mathfrak a$ in this
quotient is zero, then $k_{\mathfrak a}=0$. If it is non-zero and it is not
$\phi$-stable, then, by \eqref{eq:top},
$(J^{\{v\}},J^{\{v\}})=-\langle\phi(J^{\{v\}})_{(1)}J^{\{v\}}\rangle=0$ for $v\in\mathfrak a$,
distinct simple components of $\g^\natural$ being orthogonal with respect to the
invariant form defined by the above $\lambda$-bracket; this is not possible, the
form of $W_k(\g,\mathfrak s)$ being positive definite. Otherwise the fields
$J^{\{v\}}$, $v\in\mathfrak a$, generate a $\phi$-stable quotient of
$V^{k_{\mathfrak a}}(\mathfrak a)$ on which the invariant form is positive
definite, and $k_{\mathfrak a}$ is a non-negative integer by
\cite[Thm.~11.7]{VB} (see \cite[Section 5.3]{KMP}).

Thus, since $k_{\mathfrak a}=ak+b$ is a non-negative integer and $a\ne 0$, if
$W_k(\g,\mathfrak s)$ is unitary, we have
\begin{equation}\label{c1}k\in \frac{1}{a}(\mathbb Z-b)\quad\text{ and }\quad
k\ge -\frac{b}{a}\ \text{ if } a>0,\qquad k\le -\frac{b}{a}\ \text{ if } a<0.\end{equation}
Observe now that unitarity implies $||L||^2=\langle L_{(3)}L\rangle=c(\mathfrak{g},\mathfrak s,k)/2\ge0$.
Recalling  from \cite[Remark 2.2]{KRW} the expression of the central charge of
$W_k(\g,\mathfrak s)$, and that $\g_0$ is even and $\g_{1/2}$ is odd by Remark
\ref{lem:even}, we obtain, for suitable coefficients $\alpha, \beta$,  the inequality
 \begin{align}\notag
c(\mathfrak{g},\mathfrak s,k) &= \operatorname{dim} \mathfrak{g}_0 {}+\tfrac12\operatorname{dim}\g_{1/2} -\frac{12}{k+h^\vee}\| \rho - (k + h^{\vee})x \|^2\\
&=\frac{\alpha + \beta k  - 12(x|x) k^2}{k+h^\vee}=:\frac{Q(k)}{k+h^\vee}\ \ge0\label{c2}.
\end{align}
 Put $s=(x|x)$, so that $as>0$ by
assumption.  If $s>0$ and $a>0$, then on $k>-h^\vee$ the inequality \eqref{c2}
reads $Q(k)\ge0$ and $Q$ is concave, so $k$ is bounded above, while
$-b/a\le k\le -h^\vee$ is a bounded interval.  If $s<0$ and $a<0$, then on
$k<-h^\vee$ the inequality \eqref{c2} reads $Q(k)\le0$ and $Q$ is convex, so $k$
is bounded below, while $-h^\vee\le k\le -b/a$ is a bounded interval.  In both
cases conditions \eqref{c1} and \eqref{c2} describe a discrete and bounded, hence finite, set of real numbers.
\end{proof}

For the classical series the hypotheses of Theorem \ref{prop:gen} can be read
off the block decomposition of the defining module.  Let $\g=sl(m|n)$, $m\ne n$,
or $\g=osp(M|2N)$, let $f\in\g_{\ov 0}$ be a nilpotent element, and let
$V=\C^{m|n}$, resp.\ $\C^{M|2N}$, be the defining module.  Since the
$sl_2$-triple lies in $\g_{\ov 0}$,
$$
V=\bigoplus_{\ell\ge1}M_\ell\otimes U_\ell,\qquad U_\ell=\C^{p_\ell|q_\ell},
$$
where $M_\ell$ is the $\ell$-dimensional irreducible $sl_2$-module, a purely even
space, and $U_\ell$ is the multiplicity superspace.  All the invariant bilinear forms below are computed
with respect to the supertrace form of $V$.

\begin{lemma}\label{lem:blockeven} The nilpotent element $f$ is even if and only if the
sizes $\ell$ of the blocks contained in $V_{\ov 0}$, i.e.\ those with $p_\ell>0$,
all have the same parity, and the sizes of the blocks contained in $V_{\ov 1}$,
i.e.\ those with $q_\ell>0$, all have the other parity.  In particular, if $f$ is
even then every $U_\ell$ is either purely even or purely odd.
\end{lemma}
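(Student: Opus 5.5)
The plan is to compute the eigenvalues of $\ad x$ on $\g=\mathrm{End}(V)$ (or its subalgebra $osp$) directly from the block decomposition. Then apply the definition of evenness: the eigenvalues of $2\ad x$ must be even on $\g_{\ov 0}$ and odd on $\g_{\ov 1}$.

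First I fix the eigenvalues of $x$ on a single block. On $M_\ell$ they are $\frac{\ell-1}2,\frac{\ell-3}2,\dots,-\frac{\ell-1}2$, and $x$ acts trivially on $U_\ell$. So the eigenvalues of $2x$ on $M_\ell\otimes U_\ell$ are integers congruent to $\ell-1$ mod $2$.

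Next, $\g\subseteq\mathrm{End}(V)=\bigoplus_{\ell,\ell'}\mathrm{Hom}(M_\ell,M_{\ell'})\otimes\mathrm{Hom}(U_\ell,U_{\ell'})$. On $\mathrm{Hom}(M_\ell,M_{\ell'})$ the eigenvalues of $2\ad x$ are differences of eigenvalues of $2x$. They are therefore integers of parity $\ell+\ell'$, and by Clebsch--Gordan the module contains $M_{\ell+\ell'-1}$, so all these parities actually occur. The parity of a homogeneous element of $\mathrm{Hom}(U_\ell,U_{\ell'})$ is even when it maps $U_{\ell,\ov0}\to U_{\ell',\ov0}$ or $U_{\ell,\ov1}\to U_{\ell',\ov1}$, and odd otherwise.

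For $\g=sl(m|n)$, evenness of $f$ then says the following. Whenever $p_\ell,p_{\ell'}>0$ (or $q_\ell,q_{\ell'}>0$), the even maps force $\ell\equiv\ell'$. Whenever $p_\ell>0$ and $q_{\ell'}>0$, the odd maps force $\ell\not\equiv\ell'$. This is exactly the stated condition, and the converse is the same bookkeeping read backwards. The only care needed is that the tracelessness condition removes only part of the diagonal $\mathrm{Hom}(V,V)$ with $\ell=\ell'$, and this does not affect the surviving parities.

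For $\g=osp(M|2N)$, the invariant form on $V$ pairs $M_\ell\otimes U_\ell$ with itself, and $\g\cong\Lambda^2$-type (super-skew) endomorphisms. I would identify $\g$ with the super-antisymmetric part of $V\otimes V$ and argue that for each pair $\ell\ne\ell'$ it contains the full $M_\ell\otimes M_{\ell'}\otimes U_\ell\otimes U_{\ell'}$. For $\ell=\ell'$ it contains enough of $M_\ell\otimes M_\ell\otimes U_\ell\otimes U_\ell$ to realize both parities whenever $U_\ell$ has both even and odd parts. The main obstacle is this last point. Take $\ell=\ell'$ with $p_\ell,q_\ell>0$. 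The mixed part $U_{\ell,\ov0}\otimes U_{\ell,\ov1}$ is odd, while $2\ad x$ on $M_\ell\otimes M_\ell$ has only even eigenvalues, and the odd component of $\g$ contains all of $M_\ell\otimes M_\ell\otimes U_{\ell,\ov0}\otimes U_{\ell,\ov1}$ because there is no symmetry constraint between distinct factors. So $f$ is not even, which gives the "in particular" clause. I would write this case out explicitly and let the remaining cases follow the $sl$ argument.
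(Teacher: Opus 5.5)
Your proposal is correct and follows the same route as the paper. The eigenvalues of $2x$ on $M_\ell\otimes U_\ell$ have the parity of $\ell-1$, and those of $2\ad x$ on $\g_{\ov 0}$ (resp.\ $\g_{\ov 1}$) are differences, or sums for $osp(M|2N)$, of eigenvalues taken on the same (resp.\ opposite) parity component of $V$; your explicit check that every relevant block actually lies in $\g$, including $M_\ell\otimes M_\ell\otimes U_{\ell,\ov 0}\otimes U_{\ell,\ov 1}\subseteq\g_{\ov 1}$, simply fills in the paper's terse ``all of them occur''.
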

\begin{proof} The eigenvalues of $2x$ on $M_\ell$ are congruent to $\ell-1$
modulo $2$.  The eigenvalues of $2\ad x$ on $\g_{\ov 0}$ are the differences of
two eigenvalues of $2x$ on $V_{\ov 0}$, or on $V_{\ov 1}$, and all of them occur,
while those on $\g_{\ov 1}$ are the differences of an eigenvalue on $V_{\ov 0}$ and
one on $V_{\ov 1}$; for $\g=osp(M|2N)$ some differences have to be replaced by
sums, which does not affect parities.  The claim follows.
\end{proof}

Note that, if $f$ is even, then $\g^\natural$  is a Lie algebra, hence every simple component $\mathfrak a$ of $\g^\natural$ has a unique bilinear invariant normalized form $(\cdot |\cdot)_{\mathfrak a}$, and one defines the {\sl index} of $\mathfrak a$ in $\g$ as the ratio
$\frac{(\cdot |\cdot)_{|\mathfrak a}}{(\cdot |\cdot)_{\mathfrak a}}$.
\begin{prop}\label{prop:blocks} Let $f$ be even. With the notation above:
\begin{enumerate}
\item $\displaystyle 12\,(x|x)=\sum_{\ell\ge1}(p_\ell-q_\ell)(\ell^3-\ell)$;
\item $\g^\natural=\bigoplus_\ell\g^\natural_\ell$ with
$\g^\natural_\ell\subseteq gl(U_\ell)$, and the restriction of the invariant form
of $\g$ to $\g^\natural_\ell$ is $\ell\cdot str_{U_\ell}$.  Hence every simple
component of $\g^\natural_\ell$ has index of sign $+1$ if $U_\ell$ is even, and
of sign $-1$ if $U_\ell$ is odd;
\item consequently the hypotheses of Theorem \ref{prop:gen} hold if and only if
for some $\ell$ the Lie algebra $\g^\natural_\ell$ is not abelian and
either $U_\ell$ is even and $(x|x)>0$, or $U_\ell$ is odd and $(x|x)<0$.
\end{enumerate}
Here $\g^\natural_\ell$ is $sl(U_\ell)$, up to centre, if $\g=sl(m|n)$; if
$\g=osp(M|2N)$ it is $so(U_\ell)$ when $\ell$ is odd and $U_\ell$ even, or $\ell$
is even and $U_\ell$ odd, and $sp(U_\ell)$ otherwise.  It is non-abelian if and
only if $\dim U_\ell\ge2$, resp.\ $\dim U_\ell\ge3$ in the orthogonal case.  By
Lemma \ref{lem:blockeven}, in the orthosymplectic case the components are either
all orthogonal or all symplectic.
\end{prop}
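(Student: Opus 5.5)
A direct computation with the block decomposition $V=\bigoplus_\ell M_\ell\otimes U_\ell$ gives each part in turn, in the order (2), (1), (3).

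\emph{Part (2).} First I identify $\g^\natural$. An element of $\g$ commuting with $\mathfrak s$ is an $sl_2$-module endomorphism of $V$ preserving the (super)symmetric form, in the orthosymplectic case. By Schur's lemma it acts as $\mathrm{id}_{M_\ell}\otimes A_\ell$ on each block, so $\g^\natural\subseteq\bigoplus_\ell gl(U_\ell)$ (intersected with $\g$).
\begin{itemize}
\item For $sl(m|n)$ the supertrace-zero condition only cuts out a centre, which gives $sl(U_\ell)$ up to centre.
\item For $osp$ the invariant form on $V$ restricts to $M_\ell\otimes U_\ell$ as a tensor product of the $sl_2$-invariant form on $M_\ell$ (symmetric for $\ell$ odd, skew for $\ell$ even) with a form on $U_\ell$. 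That form on $U_\ell$ has the supersymmetry making the total form on $V$ supersymmetric, which gives the stated $so/sp$ dichotomy.
\end{itemize}
The restriction of the invariant form is then immediate: $str_V\bigl((\mathrm{id}\otimes A)(\mathrm{id}\otimes B)\bigr)=\dim M_\ell\cdot str_{U_\ell}(AB)=\ell\,str_{U_\ell}(AB)$, since $M_\ell$ is purely even.

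\emph{Sign of the index in (2).} By Lemma~\ref{lem:blockeven}, $U_\ell$ is purely even or purely odd. If $U_\ell$ is even, $str_{U_\ell}$ is the trace form, positive on real spans of roots. If $U_\ell$ is odd, $str_{U_\ell}=-tr$, so the sign of the index is $-1$. The non-abelian criterion ($\dim U_\ell\ge2$, resp.\ $\ge3$ for orthogonal) is standard.

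\emph{Part (1).} $x$ acts on $M_\ell$ with eigenvalues $\frac{\ell-1}{2},\dots,-\frac{\ell-1}{2}$, tensored with the identity on $U_\ell$. Hence
\[
(x|x)=str_V(x^2)=\sum_\ell(p_\ell-q_\ell)\sum_{j}\Bigl(\tfrac{\ell-1}{2}-j\Bigr)^2=\sum_\ell(p_\ell-q_\ell)\frac{\ell^3-\ell}{12},
\]
using the standard identity $\sum_{i}\bigl(\frac{\ell+1}{2}-i\bigr)^2=\frac{\ell(\ell^2-1)}{12}$.

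\emph{Part (3).} The hypothesis of Theorem~\ref{prop:gen}(b) requires a root $\a$ of $\g^\natural$ with $(x|x)(\a|\a)>0$. Such a root lies in a non-abelian simple component, which sits in some $\g^\natural_\ell$ (distinct $\ell$ commute and are orthogonal). Its sign $(\a|\a)$ is the sign of the index, i.e.\ $+$ for even $U_\ell$ and $-$ for odd $U_\ell$, by (2). So the hypothesis is exactly the stated condition.

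\emph{Main obstacle.} The only delicate point is the $osp$ case of (2): correctly tracking which parity of form is induced on $U_\ell$. This fixes both the type ($so$ vs.\ $sp$) and the claim that the components are all orthogonal or all symplectic. The latter follows because by Lemma~\ref{lem:blockeven} $\ell$ and the parity of $U_\ell$ are correlated uniformly.
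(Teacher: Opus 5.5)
Your proposal is correct and follows essentially the same argument as the paper. Both use Schur's lemma to place $\g^\natural$ inside $\bigoplus_\ell gl(U_\ell)$, the identity $str_V\bigl((1\otimes A)(1\otimes B)\bigr)=\ell\,str_{U_\ell}(AB)$ together with $str=-tr$ on odd $U_\ell$ (via Lemma~\ref{lem:blockeven}), the symmetric/skew tensor-product-of-forms argument for the $so/sp$ dichotomy, $tr_{M_\ell}(x^2)=\frac{\ell^3-\ell}{12}$ for (1), and Theorem~\ref{prop:gen} read through ``sign of the index $=$ sign of $(\a|\a)$'' for (3); presenting the parts in a different order is immaterial.
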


\begin{proof} (1) The element $x$ acts on $M_\ell$ with eigenvalues
$\frac{\ell-1}2,\frac{\ell-3}2,\dots,-\frac{\ell-1}2$, so that
$tr_{M_\ell}(x^2)=\frac{\ell^3-\ell}{12}$, and $x$ acts on $M_\ell\otimes U_\ell$
as $x\otimes1$, whence
$str_V(x^2)=\sum_\ell (p_\ell-q_\ell)\,tr_{M_\ell}(x^2)$.

(2) The centralizer of the triple in $gl(V)$ is $\bigoplus_\ell gl(U_\ell)$,
acting as $1\otimes A$, and $\g^\natural$ is its intersection with $\g$.  As
$M_\ell$ is purely even,
$str_V\bigl((1\otimes A)(1\otimes B)\bigr)=\ell\,str_{U_\ell}(AB)$, while
operators supported on different $\ell$ are orthogonal.  By Lemma
\ref{lem:blockeven}, $U_\ell$ is purely even or purely odd, so that
$\g^\natural_\ell$ is a Lie algebra.  If $U_\ell$ is even, the trace form of the
defining module is a positive multiple of the normalized form on a simple
subalgebra of $gl(U_\ell)$, so the index is a positive multiple of $\ell$;
if $U_\ell$ is odd one has $str=-tr$, and the index is a negative multiple
of $\ell$.  The description of $\g^\natural_\ell$ in the orthosymplectic case
follows from the fact that the invariant form of $M_\ell$ is symmetric for $\ell$
odd and skew for $\ell$ even, while the form of $V$ is symmetric on $V_{\ov 0}$
and skew on $V_{\ov 1}$.

(3) is now Theorem \ref{prop:gen}, the sign of the index being the sign of
$(\a|\a)$.
\end{proof}

Proposition \ref{prop:blocks} translates into a condition on two partitions.
Denote by $\l$ the partition given by the sizes of the blocks of $f$ on
$V_{\ov 0}$ and by $\mu$ the one on $V_{\ov 1}$, so that $p_\ell$, resp.\
$q_\ell$, is the multiplicity of $\ell$ in $\l$, resp.\ in $\mu$, and set
$$
S(\nu)=\sum_{\ell\in\nu}(\ell^3-\ell)
$$
(the sum over the parts of $\nu$, with multiplicity), so that
$12(x|x)=S(\l)-S(\mu)$ by Proposition \ref{prop:blocks}(1).  Call \emph{heavy}
whichever of $V_{\ov 0}$, $V_{\ov 1}$ has the larger $S$: by Proposition
\ref{prop:blocks}(2), a simple component of $\g^\natural$ attached to the heavy
side has index of the same sign as $(x|x)$, and one attached to the other side
has the opposite sign.

\begin{corollary}\label{cor:blocks} Let $\g=sl(m|n)$, $m\ne n$, or
$\g=osp(M|2N)$, and let $f\in\g_{\ov 0}$ be even, with partitions $\l$ and $\mu$
as above.  Then the hypotheses of Theorem \ref{prop:gen} hold if and only if
$S(\l)\ne S(\mu)$ and, in addition,
\begin{enumerate}
\item for $\g=sl(m|n)$: the partition of the heavy side has a part of
multiplicity $\ge2$;
\item for $\g=osp(M|2N)$ with the parts of $\l$ even and those of $\mu$ odd: no further condition;
\item for $\g=osp(M|2N)$ with the parts of $\l$ odd and those of $\mu$ even: the
partition of the heavy side has a part of multiplicity $\ge3$.
\end{enumerate}
\end{corollary}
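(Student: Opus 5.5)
The corollary will be obtained by specializing Proposition \ref{prop:blocks}(3) case by case, using $12(x|x)=S(\l)-S(\mu)$ from Proposition \ref{prop:blocks}(1).

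First, the hypothesis of Proposition \ref{prop:blocks}(3) requires the sign of the index of some non-abelian block $\g^\natural_\ell$ to agree with the sign of $(x|x)$. This forces $(x|x)\ne0$, i.e.\ $S(\l)\ne S(\mu)$. Conversely, assume $S(\l)\ne S(\mu)$. By Proposition \ref{prop:blocks}(2), $U_\ell$ is even when $\ell$ is a part of $\l$ and odd when $\ell$ is a part of $\mu$. So the index sign equals the sign of $(x|x)$ exactly for blocks on the heavy side. The hypotheses therefore hold if and only if some part $\ell$ of the heavy partition has $\g^\natural_\ell$ non-abelian. By the last paragraph of Proposition \ref{prop:blocks}, this means $\dim U_\ell\ge2$ or $\dim U_\ell\ge3$, and $\dim U_\ell$ is the multiplicity of $\ell$ in the heavy partition.

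It remains to decide, in each case, whether $\g^\natural_\ell$ is of $sl$, $sp$ or $so$ type.

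For $\g=sl(m|n)$, $\g^\natural_\ell$ is $sl(U_\ell)$ up to centre. It is non-abelian exactly when the multiplicity is $\ge2$, which gives (1).

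For $\g=osp(M|2N)$, Proposition \ref{prop:blocks} says $\g^\natural_\ell=so(U_\ell)$ when $\ell$ is odd with $U_\ell$ even, or $\ell$ is even with $U_\ell$ odd; otherwise it is $sp(U_\ell)$.
\begin{itemize}
\item If the parts of $\l$ are even and those of $\mu$ odd, then every block is symplectic. A symplectic space has even dimension, so its multiplicity is at least $2$, and $sp(U_\ell)$ with $\dim U_\ell\ge2$ is non-abelian. Hence every part of the heavy side already qualifies, and no further condition is needed; this is (2). The heavy side is non-empty because $S\ne0$ there.
\item If the parts of $\l$ are odd and those of $\mu$ even, every block is orthogonal. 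We then need multiplicity $\ge3$, since $so(2)$ is abelian; this is (3).
\end{itemize}

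The only step needing care is the symplectic case (2). Two points must be checked there. First, the heavy side has some part with $\ell^3-\ell>0$, so $\ell\ge2$. Second, that part actually carries a non-zero symplectic multiplicity space; this follows from non-degeneracy of the form on $V$, which forces each symplectic $U_\ell$ to be even-dimensional and non-zero. Everything else is bookkeeping.
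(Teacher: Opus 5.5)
Your proposal is correct and follows the paper's proof. You reduce via Proposition \ref{prop:blocks}(3) to the existence of a non-abelian $\g^\natural_\ell$ on the heavy side, and then read off the type in each case: $sl$; all symplectic; all orthogonal. The only difference is cosmetic: in case (2) you get even multiplicities from the non-degeneracy of the induced skew form on $U_\ell$, whereas the paper quotes the equivalent classical parity rules for partitions of nilpotent elements in $so(M)$ and $sp(2N)$.
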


\begin{proof} By Proposition \ref{prop:blocks}(3) the hypotheses hold if and only
if $(x|x)\ne0$, i.e.\ $S(\l)\ne S(\mu)$, and some $\g^\natural_\ell$ attached to
the heavy side is not abelian, which means $r\ge2$ for a component $sl(r)$ or
$sp(r)$ and $r\ge3$ for a component $so(r)$.  In case (1) all components are of
type $sl$.  In case (3) all of them are orthogonal, and in case (2) all of them
are symplectic, by Proposition \ref{prop:blocks}.  Moreover in case (2) every
multiplicity is even: $\l$ is the partition of a nilpotent element of $so(M)$,
whose even parts have even multiplicity, and all its parts are even, while $\mu$
is the partition of a nilpotent element of $sp(2N)$, whose odd parts have even
multiplicity, and all its parts are odd.  Hence $p_\ell,q_\ell\ge2$ whenever they
do not vanish, and each side carries a non-abelian component.
\end{proof}

\begingroup
\subsection{Conformal embedding and unitarity when $\g^\natural$ is semisimple}\label{sub:ss}
We first record two facts which hold in general.

\begin{lemma}\label{lem:knat0}
Let $\mathfrak s$ be an even $sl_2$ subalgebra of a basic Lie superalgebra $\g$, and assume that the level $k^\natural_{\mathfrak a}$ of a
simple ideal $\mathfrak a$ of $\g^\natural$ vanishes. Then every vector of
$W^k(\g,\mathfrak s)$ lying in an $\mathfrak a$-isotypic component of non-trivial
type belongs to the maximal ideal. In particular, if $\g^\natural$ is semisimple and all its simple ideals have
level $0$ (e.g.\ $\g^\natural$ is simple and $k^\natural=0$), then
$W_k(\g,\mathfrak s)$ is strongly generated by the images of the
$\g^\natural$-invariant strong generators of $W^k(\g,\mathfrak s)$.
\end{lemma}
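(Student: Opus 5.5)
We argue via singular vectors in the affine subalgebra. Let $k^\natural_{\mathfrak a}=0$. Write $W=W^k(\g,\mathfrak s)$. The fields $J^{\{v\}}$, $v\in\mathfrak a$, generate a quotient of $V^{0}(\mathfrak a)$ inside $W$, and their $\l$-bracket reduces to $[J^{\{v\}}{}_\l J^{\{v'\}}]=J^{\{[v,v']\}}$ with no central term. The zero modes $J^{\{v\}}_{(0)}$ act on $W$ by derivations integrating the $\mathfrak a$-action. Since $\mathfrak s$ is even, this action is semisimple, and we decompose $W=\bigoplus_\pi W^\pi$ into $\mathfrak a$-isotypic components.

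\textbf{Step 1: the form vanishes on non-trivial isotypic components.} Let $N\subset W$ be the maximal ideal. Because $W_0=\C\one$, we have $N=\{w : \langle u_{(n)} w\rangle=0 \text{ for all } u\in W, n\}$, i.e.\ $N$ is the radical of the invariant bilinear pairing given by the expectation value. It therefore suffices to show that $\langle u_{(n)}w\rangle=0$ whenever $w\in W^\pi$ with $\pi$ non-trivial. I would argue this with the universal (symmetric, invariant) bilinear form rather than a Hermitian one, so that no involution is needed. First observe that $\langle\cdot\rangle$ is $\mathfrak a$-invariant, since $J^{\{v\}}_{(0)}$ kills $W_0=\C\one$. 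Hence $\langle u_{(n)}w\rangle$ is an $\mathfrak a$-invariant pairing between $u$ and $w$, which forces the relevant component of $u$ to lie in the dual type $\pi^*$; this alone does not give vanishing.

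\textbf{Step 2: the key use of level zero.} The real input is the following. Pick a non-zero $v\in\mathfrak a$ and apply the invariance formula \eqref{eq:qpinv} to the weight-one quasiprimary fields $J^{\{v\}}$. The modes $J^{\{v\}}_{(m)}$ with $m\neq0$ then form an abelian current algebra, because the central term $\l k^\natural(v|v')$ vanishes. For $w$ in the isotypic component $\pi$, Schur's lemma lets us write $w=\sum_i J^{\{v_i\}}_{(0)} w_i$ with the $w_i$ of the same weight. Indeed, $\mathfrak a$ being semisimple, $\mathfrak a\cdot M=M$ for any module with no trivial constituents. Then
$$\langle u_{(n)} J^{\{v\}}_{(0)}w_i\rangle=-\langle (J^{\{v\}}_{(0)}u)_{(n)}w_i\rangle.$$
Iterating this does not terminate on its own, so I would combine it with the affine Sugawara argument instead. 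At level zero, the Casimir operator $\Omega=\sum (J^{\{v^a\}}J^{\{v_a\}})_{(1)}$ satisfies $[\Omega, J^{\{v\}}_{(m)}]=2(k+h^\vee_{\mathfrak a})\,m\,J_{(m)}$-type relations with $k=0$. Moreover, on the top component of any highest-weight situation, $\Omega$ acts by the Casimir $C_\pi$. Meanwhile the ordered modes show that $\Omega$ kills all vectors that are expectation-detectable, i.e.\ $\langle \Omega(\cdot)\rangle$ is computed as $\langle\cdot\rangle$ composed with zero-mode Casimir terms. Since $C_\pi\neq0$ for $\pi$ non-trivial, this forces $\langle u_{(n)}w\rangle=0$.

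\textbf{Main obstacle and the consequence.} The main obstacle is exactly this Casimir step: one must make the Sugawara-at-level-$0$ computation rigorous on $W$, where $\mathfrak a$ acts only through a possibly non-free quotient. An alternative is to show directly that $\mathfrak a$ acts trivially on $W/N$. Since $k^\natural=0$, the ideal generated by $J^{\{v\}}_{(-1)}\one$ is proper: the vector $J^{\{v\}}$ has norm $k^\natural(v|v)=0$ and is orthogonal to everything. Hence $J^{\{v\}}\in N$, so all $J^{\{v\}}_{(0)}$ act by zero on $W/N$. As $J^{\{v\}}_{(0)}$ preserves $N$ and $W^\pi=\mathfrak a\cdot W^\pi$, we get $W^\pi\subset N$. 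I would actually use this second route first, checking only that $J^{\{v\}}\in N$ via $\langle u_{(n)}J^{\{v\}}\rangle=0$, using $J^{\{v\}}_{(1)}J^{\{v'\}}=0$ and weight considerations. For the final claim, the strong generators decompose into isotypic pieces. Those of non-trivial type die in $W_k$, and likewise every normally ordered monomial containing them lies in $N$ once it is projected onto non-trivial types. Thus the images of the invariant generators strongly generate $W_k(\g,\mathfrak s)$.
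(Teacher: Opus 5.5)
Your second route, which you say you would actually use, is exactly the paper's proof. The vector $J^{\{v\}}$, $v\in\mathfrak a$, pairs to zero with all of $W_1$ and with every other $L_0$-eigenspace, so it lies in the radical of the invariant form, i.e.\ in the maximal ideal. Hence $W^\pi=\mathfrak a\cdot W^\pi$ lies in the maximal ideal too, and so does every PBW monomial containing a non-invariant generator. One point you should state explicitly: the vanishing on $W_1$ uses level zero only for $J^{\{v'\}}$ with $v'\in\mathfrak a$; for $v'$ in the other ideals or the centre of $\g^\natural$ you need that distinct ideals are orthogonal for the invariant form in the $\lambda$-bracket. The Casimir/Sugawara argument of your Step 2 is unnecessary and, as written, unjustified, so drop it.
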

\begin{proof}
Let $(\cdot,\cdot)$ be the invariant symmetric bilinear form on
$W^k(\g,\mathfrak s)$ normalized by $(\one,\one)=1$; it exists and is unique,
since $W^k(\g,\mathfrak s)_0=\C\one$ and $L_1W^k(\g,\mathfrak s)_1=0$
(cf.\ \cite{Li}), and its radical is the maximal ideal
$I$. The space $W^k(\g,\mathfrak s)_1$ is spanned by the $J^{\{v\}}$,
$v\in\g^\natural$, and distinct $L_0$-eigenspaces are orthogonal. For
$a\in\mathfrak a$ one has $(J^{\{a\}},J^{\{v\}})=\pm k^\natural_{\mathfrak a}(a|v)_{\mathfrak a}=0$
if $v\in\mathfrak a$, and $(J^{\{a\}},J^{\{v\}})=0$ if $v$ lies in another ideal
of $\g^\natural$, distinct ideals being orthogonal for every invariant bilinear
form. Hence $J^{\{a\}}$ lies in the radical, i.e.\ in $I$. A vector $u$ in a
non-trivial isotypic component is a sum of vectors $J^{\{a\}}_{(0)}u'$ with
$a\in\mathfrak a$, and $J^{\{a\}}_{(0)}u'\in I$ because $I$ is an ideal. For the
last statement, recall that $W^k(\g,\mathfrak s)$ is spanned by the normally
ordered monomials $:\!\partial^{n_1}A^1\cdots\partial^{n_r}A^r\!:$ in its strong
generators, which can be chosen in the isotypic components of $\g^\natural$;
every monomial containing a generator which is not $\g^\natural$-invariant lies
in $I$.
\end{proof}

\begin{lemma}\label{lem:coset}
Let $\mathfrak s$ be even and $\g^\natural$ semisimple, with simple ideals
$\mathfrak a_1,\dots,\mathfrak a_r$; let $k_i$ be the level of $\mathfrak a_i$
and $h^\vee_i$ its dual Coxeter number. If $W_k(\g,\mathfrak s)$ is unitary, then
$k_i\in\Z_{\ge0}$ for all $i$ and
\begin{equation}\label{cosetbound}
c(\g,\mathfrak s,k)\ \ge\ c_{\g^\natural}:=\sum_{i=1}^r\frac{k_i\dim\mathfrak a_i}{k_i+h^\vee_i}.
\end{equation}
More precisely, if $L^\natural$ is the Sugawara vector of the affine subalgebra,
then
\begin{equation}\label{cosetnorm}
(L-L^\natural,L-L^\natural)=\tfrac12\bigl(c(\g,\mathfrak s,k)-c_{\g^\natural}\bigr).
\end{equation}
If $\g^\natural$ is simple we write $k^\natural$, $h^\vee_\natural$ for $k_1$,
$h^\vee_1$.
\end{lemma}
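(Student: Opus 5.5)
The plan is to work inside the unitary simple quotient $W_k(\g,\mathfrak s)$ and to compare the norm of $L$ with the norm of the Sugawara vector $L^\natural$ of the affine subalgebra generated by the $J^{\{v\}}$, $v\in\g^\natural$.

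First, the integrality $k_i\in\Z_{\ge0}$. This is exactly the argument given in the proof of Theorem \ref{prop:gen}. Since unitarity makes the form positive definite on $W_k(\g,\mathfrak s)$, each simple ideal $\mathfrak a_i$ either has zero image in weight one, in which case $k_i=0$, or is $\phi$-stable. In the latter case it generates a unitary quotient of $V^{k_i}(\mathfrak a_i)$, and \cite[Thm.~11.7]{VB} gives $k_i\in\Z_{\ge0}$. The non-$\phi$-stable case is excluded as there, by the orthogonality of distinct ideals. When $k_i\ge0$ we have $k_i+h^\vee_i\ne0$, so the Sugawara vector $L^{\mathfrak a_i}$ is defined for each $i$. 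We set $L^\natural=\sum_i L^{\mathfrak a_i}$; the summand is zero if $k_i=0$, consistent with Lemma \ref{lem:knat0}.

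Next, the orthogonal decomposition \eqref{cosetnorm}. The key facts are the standard ones for Sugawara vectors. First, $J^{\{v\}}$ is primary of weight $1$ for both $L$ and $L^\natural$; for $L$ this holds because $L_1W_1=0$ and the $J^{\{v\}}$ are primary in a $W$-algebra with even $\mathfrak s$. Consequently $L-L^\natural$ commutes with the affine modes, so $L^\natural_{(j)}(L-L^\natural)=0$ for $j\ge 2$, in particular for $j=3$. Also $L^\natural$ is quasiprimary of weight $2$ for $L$, with $L_{(3)}L^\natural=L^\natural_{(3)}L^\natural=\frac{c_{\g^\natural}}2\one$. These identities give
\[
(L,L)=\tfrac c2,\qquad (L,L^\natural)=(L^\natural,L^\natural)=\tfrac{c_{\g^\natural}}2 .
\]
To get them I would use \eqref{eq:top} with $g(L)=L$ and $g(L^\natural)=L^\natural$. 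The latter holds because $\phi$ preserves each $\phi$-stable ideal together with its level, and the Casimir is therefore $\phi$-fixed. Expanding $(L-L^\natural,L-L^\natural)$ then yields \eqref{cosetnorm}.

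Finally, since $L-L^\natural$ is quasiprimary, positivity of the form gives $(L-L^\natural,L-L^\natural)\ge0$, which is \eqref{cosetbound}.

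The main obstacle is the computation $(L,L^\natural)=(L^\natural,L^\natural)$. I would derive it from $L_{(3)}L^\natural=\langle L^\natural_{(3)}L\rangle\one$ via skew-symmetry, together with $L^\natural_{(3)}(L-L^\natural)=0$. That last identity follows because $L^\natural_n$ acting on $L-L^\natural$ reduces to a combination of affine modes $J_m$ with $m\ge1$, which annihilate it because $L-L^\natural$ is invariant under the affine subalgebra. One still has to check that $\phi$ does not twist the normalization of the Casimir. If a non-trivial $\phi$ permutes isomorphic ideals, the sum $L^\natural$ is still $\phi$-fixed, so this causes no harm.
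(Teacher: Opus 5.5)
Your proof is correct and follows the paper's argument: integrality from the proof of Theorem \ref{prop:gen}, $\phi$-invariance of $L^\natural$, and primarity of the $J^{\{v\}}$, which makes $L-L^\natural$ a quasiprimary vector commuting with the affine subalgebra, followed by the invariant-form computation. The only difference is cosmetic. The paper applies \eqref{eq:top} directly to $L-L^\natural$, viewed as a Virasoro vector of central charge $c(\g,\mathfrak s,k)-c_{\g^\natural}$, whereas you expand the norm into $(L,L)$, $(L,L^\natural)$ and $(L^\natural,L^\natural)$; for the cross term you need \eqref{eq:qpinv} rather than \eqref{eq:top}, which is fine because both vectors are quasiprimary and $g$-fixed.
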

\begin{proof}
By the proof of Theorem \ref{prop:gen}, $k_i\in\Z_{\ge0}$ for all $i$, and the
ideals $\mathfrak a_i$ with $k_i>0$ are stable under the involution; hence
$L^\natural$ is fixed by the involution. The fields $J^{\{a\}}$,
$a\in\g^\natural$, being primary of conformal weight $1$
\cite[Thm.~7.4(b)]{KMP}, $L-L^\natural$ is a quasiprimary vector commuting with
the affine subalgebra, and it is a Virasoro vector of central charge
$c(\g,\mathfrak s,k)-c_{\g^\natural}$. Hence
$(L-L^\natural)_{(3)}(L-L^\natural)=\frac12(c(\g,\mathfrak
s,k)-c_{\g^\natural})\one$, and \eqref{eq:top} gives \eqref{cosetnorm} (cf.\
the proof of \cite[Thm.~3.1]{AMP23}).
\end{proof}

If equality holds in \eqref{cosetbound}, then $L=L^\natural$ in
$W_k(\g,\mathfrak s)$, i.e.\ the affine subalgebra is conformally embedded. In
this case a unitary $W$-algebra reduces to its affine subalgebra.

\begin{proposition}\label{prop:conformal}
Let $\g$ be a simple Lie algebra, and let $\mathfrak s$ be even with
$\g^\natural$ semisimple. If $W_k(\g,\mathfrak s)$ is unitary and
$c(\g,\mathfrak s,k)=c_{\g^\natural}$, then $W_k(\g,\mathfrak s)$ coincides with
its affine subalgebra:
$$
W_k(\g,\mathfrak s)\cong V_{k_1}(\mathfrak a_1)\otimes\cdots\otimes V_{k_r}(\mathfrak a_r).
$$
\end{proposition}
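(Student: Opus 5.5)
By Lemma \ref{lem:coset}, unitarity gives $k_i\in\Z_{\ge0}$, and the equality $c(\g,\mathfrak s,k)=c_{\g^\natural}$ gives $(L-L^\natural,L-L^\natural)=0$ by \eqref{cosetnorm}. In the simple quotient the form is positive definite, so $L=L^\natural$ in $W_k(\g,\mathfrak s)$. This means the affine subalgebra $\mathcal V$ generated by the $J^{\{a\}}$, $a\in\g^\natural$, is conformally embedded. The plan is then to show that the conformal embedding, together with unitarity and evenness of $\mathfrak s$, forces every strong generator of conformal weight $\ge 3/2$ to vanish in the simple quotient. This follows the scheme of \cite{AMP23}.

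\emph{First step.} Ideals $\mathfrak a_i$ with $k_i=0$ contribute nothing. By Lemma \ref{lem:knat0}, their non-trivial isotypic components lie in the maximal ideal, and the corresponding currents themselves vanish. We may therefore assume all $k_i>0$. The affine subalgebra is then a unitary quotient of $\bigotimes_i V^{k_i}(\mathfrak a_i)$, hence equal to $\bigotimes_i V_{k_i}(\mathfrak a_i)$ by \cite[Thm.~11.7]{VB}.

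\emph{Second step.} $W_k(\g,\mathfrak s)$ is a unitary, hence completely reducible, $\mathcal V$-module. Since $L=L^\natural$, on each irreducible summand $L_0$ acts by the Sugawara value $\frac{(\mu|\mu+2\rho_\natural)}{2(k+h^\vee_\natural)}$ plus a non-negative integer. A vector $v$ that is primary for $\mathcal V$ has $L_0 v=L^\natural_0 v$, so its conformal weight is exactly the Casimir eigenvalue of its $\g^\natural$-type. The remaining strong generators $J^{\{v\}}$, $v\in\g^f_{-j}$ with $j\ge 1/2$, have conformal weight $1+j$ and transform under $\g^\natural$ according to the decomposition of $\g^f_{-j}$. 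Because $\mathfrak s$ is even, $\g_{1/2}$ is odd by Remark \ref{lem:even}. Hence the weight-$3/2$ generators are odd, as required by Proposition \ref{previous}.

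\emph{Main obstacle.} One must show that the lowest-weight component of each isotypic space of generators is either zero in the quotient or has conformal weight equal to its Sugawara value, and that the latter is impossible. I would proceed as in \cite{AMP23}. Take the lowest generator $G$ outside $\mathcal V$. It is $\mathcal V$-primary modulo $\mathcal V$ (after subtracting normally ordered corrections), so $(L-L^\natural)_0G=0$ gives an explicit equation in $k$:
\[
1+j=\sum_i\frac{(\mu_i|\mu_i+2\rho_i)}{2(k_i+h^\vee_i)},\qquad k_i=a_ik+b_i .
\]
Combining this equation with $c(\g,\mathfrak s,k)=c_{\g^\natural}$, one checks that the solutions $k$ are exactly the collapsing levels. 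At such levels $G$ lies in the maximal ideal, which one verifies by computing $(G,G)=0$ via \eqref{eq:top} and the $\lambda$-brackets of \cite{KRW}.

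\emph{Remaining case.} If $G$ has Sugawara weight matching but a nonzero norm, unitarity forces $\mu_i$ to be an integrable weight at level $k_i$. This should contradict the bound on $\mu_i$ coming from $\g^f_{-j}$, yielding $G=0$. Once all generators of positive $\operatorname{ad}x$-degree vanish, $W_k(\g,\mathfrak s)$ is generated by $\mathcal V$, which gives the claimed isomorphism. The genuinely case-dependent part is this last verification. For exceptional $\g$ I would rely on the list in \cite{AACLMMP}, leaving $E_7$, $D_4(a_1)$, $k=-12$ for a direct check.
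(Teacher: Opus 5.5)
Your frame matches the paper's. Lemma \ref{lem:coset} and positivity give $k_i\in\Z_{\ge0}$ and $L=L^\natural$, and the affine subalgebra is $\bigotimes_iV_{k_i}(\mathfrak a_i)$. By the scheme of \cite[Thm.~3.2]{AMP23} it then suffices to show $h_R\ne\Delta_R$ for every constituent $R$ of $\g^f$ whose image can survive. That inequality is the whole content of the proposition, and you do not prove it.

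The route you put first cannot work. The equation $h_R=\Delta_R$ is the condition under which the image of $R$ \emph{may} be non-zero. So saying that its solutions, together with $c(\g,\mathfrak s,k)=c_{\g^\natural}$, ``are exactly the collapsing levels'' only restates the goal. There are also no general $\lambda$-bracket formulas for the higher strong generators of $W^k(\g,\mathfrak s)$ from which $(G,G)=0$ could be computed. Without further input the equation does have solutions: for $\g^\natural_\ell=so(3)$ at level $\kappa=\frac12$ and $R=S^2_0U_\ell$ one gets $h_R=2$.

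The missing argument is the one you only sketch in the ``remaining case'', and it must be carried out in three steps.

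First, integrability has to be derived. Let $W'$ be the subalgebra generated by the vectors of conformal weight $<\Delta_R$. By \cite{AMP23}, the image of $R$ in $W/W'$ is non-zero and killed by $J^{\{a\}}_{(n)}$, $n\ge1$. Unitarity identifies $W/W'$ with $(W')^\perp$, a unitary module over the affine algebra, so $R$ generates an integrable highest weight module and \eqref{integr} holds.

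Second, since $\g$ is a Lie algebra and $\mathfrak s$ is even, $\g_{1/2}=0$, so $\Delta_R\ge2$. There are no generators of conformal weight $\frac32$ at all; your appeal to Remark \ref{lem:even} about odd $\g_{1/2}$ is beside the point.

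Third, you must bound $h_R$ explicitly. For classical $\g$, write $V=\bigoplus_\ell M_\ell\otimes U_\ell$. The non-trivial constituents are then the adjoint representations of simple ideals, $S^2_0U_\ell$ or $\Lambda^2_0U_\ell$, and $U_\ell\otimes U_{\ell'}$. Computing Casimir eigenvalues at integrable positive levels, each $U_\ell$ contributes at most $\frac12$ and each of the other types at most $\frac43$. Hence $h_R\le\frac43<2\le\Delta_R$. This uniform numerical bound, not a level-by-level norm computation, is what kills every generator.

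For exceptional $\g$ you correctly point to the tables of \cite{AACLMMP}. The leftover case $E_7$, $D_4(a_1)$, $k=-12$ needs no direct check. There $c=0$, so $(L,L)=\frac c2=0$ forces $L=0$ and $W=\C$. This is the claim, because $c_{\g^\natural}=0$ forces all $k_i=0$.
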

\begin{proof}
Write $W=W_k(\g,\mathfrak s)$. By Lemma \ref{lem:coset}, $k_i\in\Z_{\ge0}$ and
$L-L^\natural$ has norm zero, hence $L=L^\natural$ in $W$. Moreover the affine
subalgebra of $W$ is simple, the invariant form being positive definite on it,
hence it is isomorphic to $\bigotimes_iV_{k_i}(\mathfrak a_i)$. We now argue as
in \cite[Thm.~3.2]{AMP23}. Recall that $W^k(\g,\mathfrak s)$ is strongly
generated by fields $J^{\{v\}}$, $v\in\g^f$, of conformal weight $1-j$ if
$[x,v]=jv$, which can be chosen in such a way that
$J^{\{a\}}_{(0)}J^{\{v\}}=J^{\{[a,v]\}}$ for $a\in\g^\natural$
\cite[Thm.~4.1]{AMP23}. Let $R$ be an irreducible constituent of the
$\g^\natural$-module $\{v\in\g^f\mid[x,v]=(1-\Delta_R)v\}$, $\Delta_R\ge2$, with
highest weight $\l_i$ with respect to $\mathfrak a_i$, and set
\begin{equation}\label{hR}
h_R=\sum_{i=1}^r\frac{(\l_i|\l_i+2\rho_i)_i}{2(k_i+h^\vee_i)}.
\end{equation}
By \cite[Thm.~3.2]{AMP23}, whose proof applies verbatim to any quotient of
$W^k(\g,\mathfrak s)$ in which $L=L^\natural$, the vertex algebra $W$ is
strongly generated by $\g^\natural$ and by the images of the constituents $R$
in a minimal set $K$, and $h_R=\Delta_R$ for all $R\in K$. Hence $W$ coincides
with its affine subalgebra if $h_R\ne\Delta_R$ for all $R$. Our setting differs
from that of \cite{AMP23} (and of \cite[Criterion 2.4]{AACLMMP}) in two
respects.
\begin{enumerate}
\item In \cite[Thm.~3.1]{AMP23} the equality $L=L^\natural$ is derived from
$c(\g,\mathfrak s,k)=c_{\g^\natural}$ under a hypothesis on the strong
generators of conformal weight $2$, which holds if $L$ is, up to multiples, the
only $\g^\natural$-invariant one (condition (D) of
\cite[Criterion 2.4]{AACLMMP}). For $\g$ of classical type the latter condition
holds only for the nilpotent elements of Jordan type $(q^l,1^r)$
\cite[Prop.~4.1, 4.9]{AACLMMP}. Here $L=L^\natural$ follows from unitarity, and
no hypothesis on the generators is needed.
\item In \cite{AMP23}, \cite{AACLMMP} the condition $h_R\ne\Delta_R$ is required
for all $R$. Here it is needed only for the constituents which are integrable.
Indeed, let $R\in K$ and let $W'$ be the vertex subalgebra of $W$ generated by
the vectors of conformal weight less than $\Delta_R$. In the proof of
\cite[Thm.~3.2]{AMP23} it is shown that the image of $R$ in $W/W'$ is non-zero
and is annihilated by the operators $J^{\{a\}}_{(n)}$, $a\in\g^\natural$,
$n\ge1$. Since $W$ is unitary, $W/W'$ is isomorphic to the orthogonal complement
of $W'$, which is a unitary module over the affine Lie algebra of
$\g^\natural$. Therefore $R$ generates a unitary highest weight module, which
is integrable \cite[Thm.~11.7]{VB}:
\begin{equation}\label{integr}
(\l_i|\theta_i^\vee)\le k_i,\qquad i=1,\dots,r,
\end{equation}
where $\theta_i$ is the highest root of $\mathfrak a_i$.
\end{enumerate}
We are thus reduced to proving that $h_R\ne\Delta_R$ for the constituents $R$
which satisfy \eqref{integr}.

Let $\g$ be of classical type. We use the notation of Proposition
\ref{prop:blocks}, with all $U_\ell$ even. Since $\g$ is $sl(V)$, $S^2V$ or
$\Lambda^2V$ and $V=\bigoplus_\ell M_\ell\otimes U_\ell$, every non-trivial
irreducible constituent of the $\g^\natural$-module $\g$ is of one of the
following types: (a) the adjoint representation of a simple ideal; (b) the
non-trivial constituent $S^2_0U_\ell$ of $S^2U_\ell$ if
$\g^\natural_\ell=so(U_\ell)$, or $\Lambda^2_0U_\ell$ of $\Lambda^2U_\ell$ if
$\g^\natural_\ell=sp(U_\ell)$; (c) $U_\ell\otimes U_{\ell'}$, $\ell\ne\ell'$.
If $\g=sl(V)$ only (a) occurs, $\g^\natural$ being semisimple. Let
$m=\dim U_\ell$, and let $\kappa$ be the level of $\g^\natural_\ell$ with
respect to the form $tr_{U_\ell}$ if $\g^\natural_\ell$ is $sl(U_\ell)$ or
$sp(U_\ell)$, and to $\frac12tr_{U_\ell}$ if it is $so(U_\ell)$, $m\ge3$; by
\eqref{integr} we may assume $\kappa>0$. The contributions to $h_R$ of $U_\ell$,
of the adjoint representation and of the representation in (b) are
$$
\begin{array}{llll}
sl(U_\ell):& &\frac m{\kappa+m};&\\[4pt]
sp(U_\ell):&\frac{m+1}{4(\kappa+\frac m2+1)},&\frac{\frac m2+1}{\kappa+\frac m2+1},&\frac{\frac m2}{\kappa+\frac m2+1};\\[4pt]
so(U_\ell):&\frac{m-1}{2(\kappa+m-2)},&\frac{m-2}{\kappa+m-2},&\frac{m}{\kappa+m-2}.
\end{array}
$$
Here $\kappa\ge1$, except for $so(3)$, where $\kappa\in\frac12\Z_{>0}$ and
\eqref{integr} requires $\kappa\ge1$ for $U_\ell$ and $\kappa\ge2$ for
$S^2_0U_\ell$. Hence the contribution of $U_\ell$ is at most $\frac12$ and the
other ones are at most $\frac43$, so that $h_R\le\frac43<2\le\Delta_R$. Note
that integrability cannot be dropped: for $\g^\natural_\ell=so(3)$,
$\kappa=\frac12$ and $R=S^2_0U_\ell$ one has $h_R=2$.

Let now $\g$ be of exceptional type. By inspection of
\cite[Tables 2--14]{AACLMMP}, for $\mathfrak s$ even and $\g^\natural$ semisimple the
conditions $k_i\in\Z_{\ge0}$ for all $i$ and
$c(\g,\mathfrak s,k)=c_{\g^\natural}$ hold exactly in the following cases,
where $\mathfrak s$ is denoted by the Bala--Carter label of $f$:
$$
\begin{array}{ll}
F_4:& A_2\ (k=-\tfrac92),\quad B_3\ (k=-\tfrac{59}8);\\[3pt]
E_6:& A_2\ (k=-6);\\[3pt]
E_7:& A_2+3A_1\ (k=-\tfrac{21}2),\quad D_4(a_1)\ (k=-12),\quad
A_3+A_2+A_1\ (k=-\tfrac{40}3,\,-\tfrac{53}4,\,-\tfrac{25}2),\\[3pt]
& A_4+A_2\ (k=-\tfrac{71}5,\,-\tfrac{41}3),\quad
A_6\ (k=-\tfrac{108}7,\,-\tfrac{107}7);\\[3pt]
E_8:& 2A_2\ (k=-18),\quad D_4(a_1)+A_2\ (k=-\tfrac{65}3,\,-21).
\end{array}
$$
In \cite{AACLMMP}  all these levels, with the exception of $E_7$, $D_4(a_1)$, $k=-12$,
are proved to be collapsing, i.e.\ at these levels $W_k(\g,\mathfrak s)$
coincides with its affine subalgebra, which is our claim. In the remaining case
$c(\g,\mathfrak s,k)=0$; since $W$ is unitary, $(L,L)=\frac c2=0$, hence $L=0$
and $W=W_0=\C$. This is again our claim, because $c_{\g^\natural}=c(\g,\mathfrak s,k)=0$
forces $k_i=0$ for all $i$.
\end{proof}

\subsection{Rectangular nilpotents}\label{sub:rect} Lemma \ref{lem:coset} sharpens the inequality \eqref{c2} used in the proof of
Theorem \ref{prop:gen}. For rectangular nilpotents it determines all the levels
at which unitarity is possible. Let $q,m\ge2$, $N=qm$, and let $f=f_{q^m}\in sl_N$
be a nilpotent element of Jordan type $q^m$. By \cite[Section 10]{AMP23},
$\g^\natural\cong sl_m$, $k^\natural=q(k+N-m)$, and $W^k(sl_N,f)$ is strongly
generated by the fields $J^{\{a\}}$, $a\in\g^\natural$, by $L$, by fields $W_i$,
$3\le i\le q$, of conformal weight $i$, on which $\g^\natural$ acts trivially,
and by fields of conformal weight $i$, $2\le i\le q$, which for each $i$ span a
copy of the adjoint representation of $\g^\natural$ \cite[Thm.~10.1]{AMP23}.
Moreover
$$
c(sl_N,f,k)=\frac{k\bigl(m(q-q^3)(k+N)+N^2-1\bigr)}{k+N}-m^2q(q^3-2q^2+1),
\qquad c_{\g^\natural}=\frac{(m^2-1)k^\natural}{k^\natural+m},
$$
and a direct computation gives
\begin{equation}\label{cosetrect}
c(sl_N,f,k)-c_{\g^\natural}=
-\frac{m(q-1)\,(k^\natural-1)(k^\natural+1)\bigl(N+(q+1)k^\natural\bigr)}
{(k^\natural+m)(k^\natural+N)} .
\end{equation}
The three zeros of the right hand side are the conformal levels of
\cite[(10.4)]{AMP23}.

\begin{proposition}\label{prop:rect}
Let $q,m\ge2$ and $N=qm$. If  $W_k(sl_N,f_{q^m})$ is unitary, then
$k^\natural\in\{0,1\}$, i.e.\ either $k=-N+m$, and then $c=q-1$, or
$k=-N+m+\frac1q$, and then $c=m-1$.
\end{proposition}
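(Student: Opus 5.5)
Combine Lemma \ref{lem:coset} with the explicit coset formula \eqref{cosetrect}. Suppose $W_k(sl_N,f_{q^m})$ is unitary. Since $\g^\natural\cong sl_m$ is simple (as $m\ge2$), the proof of Theorem \ref{prop:gen} gives $k^\natural=q(k+N-m)\in\Z_{\ge0}$. Lemma \ref{lem:coset} then gives
\[
c(sl_N,f,k)-c_{\g^\natural}=2\,(L-L^\natural,L-L^\natural)\ \ge 0 .
\]

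Evaluate the sign of the right hand side of \eqref{cosetrect} for $k^\natural=\kappa\in\Z_{\ge0}$. The denominator $(\kappa+m)(\kappa+N)$ and the factors $m(q-1)$ and $N+(q+1)\kappa$ are strictly positive. Hence the difference equals a negative number times $(\kappa-1)(\kappa+1)$. This is strictly negative for every integer $\kappa\ge2$, which contradicts unitarity. So $\kappa\in\{0,1\}$. For $\kappa=0$ the difference is $+m(q-1)/(m N)\cdot N/\dots>0$, which is allowed; for $\kappa=1$ it vanishes. Translating back via $k^\natural=q(k+N-m)$ gives $k=-N+m$ and $k=-N+m+\frac1q$ respectively.

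It remains to compute the central charges.

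For $\kappa=1$, equality holds in \eqref{cosetbound}. Then $c=c_{\g^\natural}=\frac{(m^2-1)\cdot1}{1+m}=m-1$. (Proposition \ref{prop:conformal} additionally shows that $W$ collapses to $V_1(sl_m)$, but this is not needed.)

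For $\kappa=0$, $c_{\g^\natural}=0$, so $c$ is the right hand side of \eqref{cosetrect} at $\kappa=0$. This equals $\frac{m(q-1)N}{mN}=q-1$. Alternatively, substitute $k=-N+m$ directly into the formula for $c(sl_N,f,k)$ as a check.

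The main obstacle is only the bookkeeping. One must verify the factorization \eqref{cosetrect}, which the paper states as a direct computation and which I would take as given. One must also make sure the positivity of the remaining factors is used correctly, including at $\kappa=0$ where $c_{\g^\natural}$ degenerates harmlessly. Everything else is immediate from Lemma \ref{lem:coset}.
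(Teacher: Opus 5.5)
Your proposal is correct and is essentially the paper's own proof: the proof of Theorem \ref{prop:gen} gives $k^\natural\in\Z_{\ge0}$, and the sign of \eqref{cosetrect} contradicts Lemma \ref{lem:coset} when $k^\natural\ge2$. At $k^\natural=0$ and $k^\natural=1$ the central charges $q-1$ and $m-1$ are then read off from \eqref{cosetrect} (your garbled intermediate expression at $\kappa=0$ should simply read $\frac{m(q-1)N}{mN}=q-1$).
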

\begin{proof}
By the proof of Theorem \ref{prop:gen}, $k^\natural\in\Z_{\ge0}$. If
$k^\natural\ge2$ the right hand side of \eqref{cosetrect} is negative, against
Lemma \ref{lem:coset}. For $k^\natural=0$ one has $c_{\g^\natural}=0$ and
\eqref{cosetrect} gives $c=q-1$; for $k^\natural=1$ it gives
$c=c_{\g^\natural}=m-1$.
\end{proof}

At $k^\natural=1$ unitarity does hold, and the $W$-algebra collapses to its
affine subalgebra.

\begin{theorem}[{\cite[Thm.~10.2]{AMP23}}, see also \cite{AvEM}]\label{thm:qm}
Let $q,m\ge2$ and $N=qm$. Then
$$
W_{-N+m+\frac1q}(sl_N,f_{q^m})\cong V_1(sl_m).
$$
In particular this $W$-algebra is unitary.
\end{theorem}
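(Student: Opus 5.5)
The isomorphism $W_{-N+m+\frac1q}(sl_N,f_{q^m})\cong V_1(sl_m)$ is quoted from \cite[Thm.~10.2]{AMP23}, so I would take it as given and only derive the unitarity statement. The plan is to transfer the well-known unitarity of $V_1(sl_m)$ across this isomorphism. The one point needing care is that unitarity of $W_k(\g,\mathfrak s)$ is defined through its own conformal vector $L$ and a conjugate linear involution $\phi$ fixing $L$. So I must check two things: that $L$ corresponds to the Sugawara vector of $V_1(sl_m)$, and that the standard involution of $V_1(sl_m)$ matches an involution $\phi$ of the $W$-algebra fixing $L$.

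\textbf{Step 1: the conformal vector is Sugawara.} At $k=-N+m+\frac1q$ one has $k^\natural=q(k+N-m)=1$. By \eqref{cosetrect}, $c(sl_N,f,k)=c_{\g^\natural}=m-1$. The collapse in \cite{AMP23} is proved precisely by showing $L=L^\natural$ in the simple quotient. Hence, under the isomorphism, $L$ maps to the Sugawara vector of $V_1(sl_m)$, and the isomorphism is one of conformal vertex algebras.

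\textbf{Step 2: the involution and the form.} On $V_1(sl_m)$, take the conjugate linear involution $\phi_0$ induced by the compact real form $su(m)$, composed with the sign convention of \cite[Section 5.3]{KMP}, so that $g(J^a)=-J^{\phi_0 a}$. Then $V_1(sl_m)$ is unitary: it is the integrable level $1$ vacuum module, and its invariant Hermitian form is positive definite by \cite[Thm.~11.7]{VB} (or by the lattice construction for $A_{m-1}$). Pulling $\phi_0$ back along the isomorphism gives a conjugate linear involution $\phi$ of $W_k(sl_N,f_{q^m})$ fixing $L$. By uniqueness of the normalized invariant Hermitian form (Theorem \ref{thm:kmp1}(1)), the form of $W_k$ is the pullback of the positive definite form on $V_1(sl_m)$. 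Therefore $W_k$ is unitary.

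\textbf{Main obstacle.} The main issue is bookkeeping rather than mathematics. The involution of $W_k$ used for unitarity should be one induced from $\g$, namely a compact-type involution of $sl_N$ that restricts to the compact form on $\g^\natural=sl_m$. Such an involution exists (choose the compact real form commuting with a real $sl_2$-triple), but this is not even required, since the definition only asks for some involution fixing $L$. I would simply remark that any such choice produces the same positive definite form.
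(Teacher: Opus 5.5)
Your proposal is correct and follows the paper's route. The paper quotes the isomorphism from \cite[Thm.~10.2]{AMP23} and reads off unitarity from that of the integrable vacuum module $V_1(sl_m)$ \cite[Thm.~11.7]{VB}; your Step 1 ($L$ goes to the Sugawara vector, consistent with $k^\natural=1$ and $c=c_{\g^\natural}=m-1$) and Step 2 (pull back the compact involution and use uniqueness of the normalized form) are the implicit bookkeeping. The only slip is the parenthetical in your last paragraph: a conjugation with respect to a compact real form of $sl_N$ cannot fix a nonzero nilpotent, so an involution induced from $\g$ that fixes $e,x,f$ is compact only on $\g^\natural$, not on $sl_N$; as you note, any involution fixing $L$ suffices, so this does not affect the argument.
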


At $k^\natural=0$, by Lemma \ref{lem:knat0}, $W_{-N+m}(sl_N,f_{q^m})$ is strongly
generated by the images of $L$ and of $W_3,\dots,W_q$, and has central charge
$q-1$. For $q=2$ this settles the question.

\begin{corollary}\label{cor:2mall}
For all $m\ge2$, $W_k(sl_{2m},f_{2^m})$ is unitary if and only if
$k\in\{-m,-m+\frac12\}$, and
$$
W_{-m}(sl_{2m},f_{2^m})=Vir_{c=1},\qquad W_{-m+\frac12}(sl_{2m},f_{2^m})\cong V_1(sl_m).
$$
\end{corollary}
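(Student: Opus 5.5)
The statement splits into an ``only if'' direction and two identifications at the allowed levels, and I will treat them in that order.

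For the ``only if'' direction I apply Proposition \ref{prop:rect} with $q=2$. Here $N=2m$ and $k^\natural=2(k+m)$, so unitarity forces $k^\natural\in\{0,1\}$. This means $k=-m$, with $c=q-1=1$, or $k=-m+\frac12$, with $c=m-1$.

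The level $k=-m+\frac12$ is Theorem \ref{thm:qm} with $q=2$. It gives $W_{-m+\frac12}(sl_{2m},f_{2^m})\cong V_1(sl_m)$, which is unitary: $k^\natural=1$ is a non-negative integer, so $V_1(sl_m)$ is unitary with respect to the compact involution (\cite[Section 5.3]{KMP}). The invariant form is unique up to normalization by Theorem \ref{thm:kmp1}(1), so it does not matter which involution of the $W$-algebra we compare with, as long as the isomorphism intertwines it with the compact one.

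The level $k=-m$ has $k^\natural=0$ and $\g^\natural=sl_m$ simple. By Lemma \ref{lem:knat0}, $W_{-m}(sl_{2m},f_{2^m})$ is strongly generated by the images of the $\g^\natural$-invariant strong generators of $W^k$. By the description in Section \ref{sub:rect}, for $q=2$ there are no $W_i$ with $3\le i\le q$, so the only invariant strong generator is $L$. The simple quotient is therefore generated by a Virasoro field of central charge $c=q-1=1$. Being simple, it is the simple Virasoro vertex algebra $Vir_{c=1}$, i.e.\ the quotient of the universal Virasoro algebra by its maximal ideal.

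It remains to check that this is unitary, and this is the only real point. $Vir_{c=1}$ with the involution fixing $L$ is unitary: $c=1$ lies in the unitary range $c\ge1$ of the Virasoro algebra, where the Kac determinant shows the Shapovalov form on the vacuum Verma module is positive semidefinite, so the form is positive definite on the simple quotient. Alternatively, Theorem \ref{4.2} for $\g=sl_2$ gives unitarity of $W_k(sl_2)$ for $c\ge1$.

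One must also make sure that the Hermitian form on $W_{-m}(sl_{2m},f_{2^m})$, for the involution $\phi$ coming from the $W$-algebra, agrees with the Virasoro one. This holds because $\phi(L)=L$ and the invariant form is determined by its values on the generator $L$, so uniqueness in Theorem \ref{thm:kmp1}(1) identifies the two forms.
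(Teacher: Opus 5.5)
Your proposal is correct and follows the paper's proof. Proposition \ref{prop:rect} with $q=2$ restricts the levels to $k^\natural=2(k+m)\in\{0,1\}$, Theorem \ref{thm:qm} handles $k=-m+\frac12$, and Lemma \ref{lem:knat0} reduces $k=-m$ to a simple vertex algebra strongly generated by $L$ with $c=1$, i.e.\ the unitary $Vir_{c=1}$. Your extra justifications (positivity of the vacuum form for $c\ge1$ or Theorem \ref{4.2} for $sl_2$, and the uniqueness of the involution fixing $L$) only spell out what the paper asserts in one line.
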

\begin{proof}
By Proposition \ref{prop:rect} only these two levels are possible. At $k=-m$
the simple quotient is strongly generated by $L$ and has $c=1$, hence it is a
quotient of the universal Virasoro vertex algebra of central charge $1$, which
is simple and unitary. The level $k=-m+\frac12$ is Theorem \ref{thm:qm}.
\end{proof}

For $m=2$, $f_{2^2}$ is the short nilpotent element $f_{sh}$ of $sl_4$, and
Corollary \ref{cor:2mall} says that $W_k(sl_4,f_{sh})$ is unitary exactly for
$k=-2$ and $k=-\frac32$, with $W_{-2}(sl_4,f_{sh})=Vir_{c=1}$ and
$W_{-3/2}(sl_4,f_{sh})\cong V_1(sl_2)$; in this case the statement can also be
checked using the explicit $\lambda$-brackets of \cite{AMPenn}. The next example
shows that, for
$q\ge3$, invariant generators of higher conformal weight may survive at
$k^\natural=0$, and that they may produce unitary $W$-algebras which are not
generated by $L$ and $V^{k^\natural}(\g^\natural)$.

\begin{proposition}\label{prop:33}
Let $\g=sl_6$ and $f$ of Jordan type $3^2$, so that $\g^\natural\cong sl_2$,
$k^\natural=3(k+4)$ and $c=203-48t-\frac{210}t$, $t=k+6$.
\begin{enumerate}
\item $W_k(sl_6,f)$ is unitary if and only if $k\in\{-4,-\frac{11}3\}$.
\item $W_{-11/3}(sl_6,f)\cong V_1(sl_2)$.
\item $W_{-4}(sl_6,f)\cong W_{-2}(sl_3)$, the principal $W$-algebra of central
charge $2$. In particular $W_{-4}(sl_6,f)$ is unitary, but it is not generated by
$L$ and the image of $V^{k^\natural}(\g^\natural)$.
\end{enumerate}
\end{proposition}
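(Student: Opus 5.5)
Part (1) comes from Proposition \ref{prop:rect} with $q=3$, $m=2$, $N=6$. That proposition restricts unitarity to $k^\natural\in\{0,1\}$, that is, to $k=-4$ or $k=-\frac{11}3$. The converse follows from (2) and (3). For (2) I quote Theorem \ref{thm:qm}, which gives $W_{-6+2+\frac13}(sl_6,f_{3^2})\cong V_1(sl_2)$, and this is unitary. I will first check the central charge formula $c=203-48t-\frac{210}t$ against the general formula for $c(sl_N,f,k)$ and against $k^\natural=3(k+4)=3(t-2)$. At $t=2$ this gives $c=203-96-105=2=q-1$, and at $t=\frac73$ it gives $c=203-112-90=1=m-1$, so both values are consistent.

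The substantial part is (3). At $k=-4$ we have $k^\natural=0$, so by Lemma \ref{lem:knat0} the simple quotient $W=W_{-4}(sl_6,f)$ is strongly generated by the images of $L$ and $W_3$, and $c=2$. The plan is to identify $W$ with a quotient of the universal $W_3$-algebra $W^{k'}(sl_3)$ with $c=2$. For $sl_3$, the central charge $c=2-24\frac{(t'-1)^2}{t'}$ equals $2$ only at $t'=1$, i.e.\ $k'=-2$. The key steps are as follows. First, show that the image of $W_3$ in $W$ is nonzero. Second, after normalizing it to a primary field, compute the $\lambda$-bracket $[W_3{}_\lambda W_3]$ in $W$ and show that it closes on $L$, $\Lambda=:LL:-\frac3{10}T^2L$, and derivatives, with the Zamolodchikov structure constant of $c=2$. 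Once this is done, the universal property gives a surjection $W^{-2}(sl_3)\to W$, which descends to an isomorphism $W_{-2}(sl_3)\cong W$ because $W$ is simple.

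Closure is automatic once we know $W$ is strongly generated by $L$ and $W_3$. Indeed, $[W_3{}_\lambda W_3]$ must be a combination of normally ordered polynomials in $L$, $W_3$ of weight at most $5$, and by weight and parity considerations only $L$, $T L$, $\Lambda$ and their derivatives can appear. Jacobi identities then force the standard $W_3$ relations for any nondegenerate normalization.

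The main obstacle is showing that $W_3$ survives in the simple quotient, i.e.\ $(W_3,W_3)\neq0$. If it did not survive, $W$ would be the Virasoro algebra with $c=2$. Then $W$ would still be unitary, but it would not be generated by $L$ as claimed. Indeed, the claim in (3) that $W$ is not generated by $L$ and the affine part is exactly this nonvanishing. To settle it, I would compute the norm $\langle g(W_3)_{(5)}W_3\rangle$ at $k=-4$ via \eqref{eq:top}, using an explicit expression of the weight-$3$ invariant generator (for example from the free field or Miura realization of $W^k(sl_6,f_{3^2})$ in \cite{AMP23}), and check that it does not vanish.

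An alternative route, which I would try first, is to use the coset $\operatorname{Com}$ description together with the known collapse/identification of $W_{-4}(sl_6,f_{3^2})$ with a $W_3$-algebra in the literature on rectangular $W$-algebras. Once $W\cong W_{-2}(sl_3)$ is established, unitarity of $W$ follows from Theorem \ref{4.2} with $c=2\ge\rank(sl_3)$. This completes the converse in (1).
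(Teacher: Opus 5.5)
Your proposal follows the paper's proof: Proposition \ref{prop:rect} and Theorem \ref{thm:qm} for (1)--(2). At $k=-4$ it uses Lemma \ref{lem:knat0}, normalization of the weight-$3$ invariant generator to a primary field (what matters is that its image is not in $\C\,\partial L$, not merely that it is non-zero), uniqueness of the $W_3$-algebra, and Theorem \ref{4.2} at $c=2$, with the decisive non-vanishing settled by an explicit computation, as in the paper (there via the Miura realization of \cite{AM}, showing that the weight $1,2,3$ spaces have dimensions $0,1,2$). The one organizational difference is that the paper uses the dichotomy you already observed (either $Vir_{c=2}$ or $W_{-2}(sl_3)$, both unitary) to prove (1) independently of that computation, which is then needed only for the last clause of (3); your literature alternative is unlikely to help, since $k=-4$ is not an admissible level for $sl_6$.
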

\begin{proof}
By Proposition \ref{prop:rect} only the levels $k=-4$ and $k=-\frac{11}3$ are
possible, and (2) is Theorem \ref{thm:qm}. At $k=-4$ one has $k^\natural=0$ and
$c=2$, and by Lemma \ref{lem:knat0} $W_{-4}(sl_6,f)$ is strongly generated by the images of $L$ and of
the invariant generator $W=W_3$ of conformal weight $3$. If the image of $W$
lies in $\C\,\partial L$, then $W_{-4}(sl_6,f)=Vir_{c=2}$, which is unitary.
Otherwise, replacing $W$ by $W+a\,\partial L$ we may assume $(\partial L,W)=0$;
since the weight one space is zero, $L_1W\in\C L$ and $L_2W=0$, and
$(L,L_1W)=(L_{-1}L,W)=0$ gives $L_1W=0$, while $L_3W=(\partial L,W)\one=0$. Thus
$W$ is primary, and $(W,W)\ne0$ because $W$ is not in the radical. Hence
$W_{-4}(sl_6,f)$ is a simple vertex algebra strongly generated by a Virasoro
field of central charge $2$ and a primary field of conformal weight $3$ with
non-zero norm. By the uniqueness of the $W_3$ algebra \cite{Za}, \cite{Lin}, it
is the simple quotient of the universal $W_3$-algebra of central charge $2$,
i.e.\ $W_{k'}(sl_3)$ with $2-\frac{24(k'+2)^2}{k'+3}=2$, i.e.\ $k'=-2$, which is
unitary by Theorem \ref{4.2}, since $c=2=\rank(sl_3)$ (see also \cite{CTW}). In
both cases $W_{-4}(sl_6,f)$ is unitary, which proves (1). Finally, a computer
calculation of the radical of the invariant form in conformal weight $\le3$,
based on the Miura realization of \cite{AM}, shows that the homogeneous
components of conformal weight $1,2,3$ of $W_{-4}(sl_6,f)$ have dimensions
$0,1,2$; hence the image of $W$ is not in $\C\,\partial L$, and the second case
occurs. This proves (3).
\end{proof}

Note that $k=-4$ is not an admissible level for $sl_6$, so that
\cite[Thm.~1.1]{AvEM} is not available there.

\endgroup

\section{Unitarity of the $W$-algebras of $sl_4$}\label{sec:sl4}
As an illustration of the previous results, we collect in Table \ref{tab:sl4} what
is known on the unitarity of the simple $W$-algebras $W_k(sl_4,\mathfrak s)$,
for all the $sl_2$-subalgebras $\mathfrak s$ of $sl_4$. (The affine vertex algebra
$V_k(sl_4)$, which corresponds formally to $f=0$, is unitary exactly for $k\in\Z_{\ge0}$
\cite[Thm.~11.7]{VB}.) The conjugacy classes of the
subalgebras $\mathfrak s$ are parametrized by the partitions $\l\ne(1^4)$ of $4$, where
$\l$ is the Jordan type of $f$; $\mathfrak s$ is even for all $\l$ except
$\l=(2,1^2)$. We set $t=k+4$. Since $\|\rho\|^2=5$, the central charge is
\cite[Remark 2.2]{KRW}
$$
c(sl_4,\mathfrak s,k)=\dim\g_0-\tfrac12\dim\g_{1/2}+24(\rho|x)-\frac{60}t-12(x|x)\,t .
$$
We denote by $\mathcal H$ the Heisenberg vertex algebra of rank one with its standard
conformal vector, of central charge $1$, by $Vir_{c=1}$ the Virasoro vertex
algebra of central charge $1$, and by $V_{\Z\a}$ the lattice vertex algebra of
the lattice $\Z\a$.

\begin{table}[h]
\renewcommand{\arraystretch}{1.5}
\setlength{\tabcolsep}{4pt}
\footnotesize
\begin{tabular}{|c|c|c|l|c|l|l|}
\hline
$\l$ & $\g^\natural$ & $c(sl_4,\mathfrak s,k)$ & unitary for & $c$ & $W_k(sl_4,\mathfrak s)$ & source\\
\hline
$2,1^2$ & $gl_2$ & $39-\frac{60}t-6t$ & $k=-1$ & $1$ & $\mathcal H$ &
\cite[Thm.~7.9, Cor.~7.12]{KMP}\\
\hline
$2^2$ & $sl_2$ & $55-\frac{60}t-12t$ & $k=-2$ & $1$ & $Vir_{c=1}$ & Corollary \ref{cor:2mall}\\
 & & & $k=-\frac32$ & $1$ & $V_1(sl_2)$ & Corollary \ref{cor:2mall}, \cite{AMP23}\\
\hline
$3,1$ & $\C$ & $77-\frac{60}t-24t$ & $k=-\frac83$ & $0$ & $\C$ & Proposition \ref{prop:31}, \cite{AvEM}\\
 & & & $k=-\frac52$ & $1$ & $\mathcal H$ & Proposition \ref{prop:31}, \cite{AMP23}\\
 & & & $k=-\frac73$ & $1$ & $V_{\Z\a}$, $(\a|\a)=4$ & Proposition \ref{prop:31}, \cite{ACGHR}\\
\hline
$4$ & $0$ & $123-\frac{60}t-60t$ & $k<-4$ or $|k+4|=1$ & $c\ge3$ & --- & Theorem \ref{4.2}\\
 & & & $k+4\in\{\frac p{p+1},\frac{p+1}p\}$, $p\ge4$ & $3-\frac{60}{p(p+1)}$ &
 ($\C$ if $p=4$) & Remark \ref{rem:ACL}, \cite{ACL}\\
\hline
\end{tabular}
\medskip
\caption{Unitarity of $W_k(sl_4,\mathfrak s)$; here $t=k+4$.}\label{tab:sl4}
\end{table}

In the first three  rows of Table \ref{tab:sl4} the list is complete:
$W_k(sl_4,\mathfrak s)$ is unitary if and only if $k$ is one of the listed
values. In the last row it is expected to be complete (Remark \ref{rem:ACL}).\par
The only case in which unitarity is not already detected  is 
 $\l=(3,1)$. In the following we deal with this case. Here $f$ is a subregular nilpotent element. It is even, but
Theorem \ref{prop:gen} does not apply, since $\g^\natural$ is abelian.
Nevertheless the method of its proof, in the sharper form of Lemma
\ref{lem:coset}, still determines all the unitary levels: see Proposition
\ref{prop:31} below.

We may take $x=\mathrm{diag}(1,0,-1,0)$, and $\g^\natural=\C v$ with
$v=\mathrm{diag}(1,1,1,-3)$. The vertex algebra $W^k(sl_4,f)$ is strongly
generated by $J=J^{\{v\}}$, of conformal weight $1$, by $L$, by two fields
$G^\pm$ of conformal weight $2$ such that $J_{(0)}G^\pm=\pm4\,G^\pm$, and by a
field $W$ of conformal weight $3$ such that $J_{(0)}W=0$. In particular $J_{(0)}$
is diagonalizable with eigenvalues in $4\Z$, and the vectors of conformal weight
$2$ on which $J_{(0)}$ acts by $\pm4$ are the multiples of $G^\pm$. By the
formula for the $\l$-bracket given in the proof of Theorem \ref{prop:gen},
\begin{equation}\label{31:level}
J_{(1)}J=B\,\one,\qquad B=12k+\tfrac12(96-32)=4(3k+8),
\end{equation}
and, for a suitable normalization of $G^\pm$ \cite[(3.3)]{CL4} (see also
\cite{FS}, \cite{G}),
\begin{equation}\label{31:nu}
G^+_{(3)}G^-=\nu(k)\,\one,\qquad \nu(k)=(k+2)(2k+5)(3k+8).
\end{equation}
Since $G^\pm$ are quasiprimary and $J_{(0)}$ acts trivially on
$W^k(sl_4,f)_0$, the vectors $G^\pm$ lie in the maximal ideal of $W^k(sl_4,f)$
if and only if $\nu(k)=0$.

\begin{proposition}\label{prop:31}
Let $f\in sl_4$ be of Jordan type $(3,1)$. Then $W_k(sl_4,f)$ is unitary if and
only if $k\in\{-\frac83,-\frac52,-\frac73\}$. Moreover
$$
W_{-\frac83}(sl_4,f)=\C,\qquad W_{-\frac52}(sl_4,f)\cong \mathcal H,\qquad
W_{-\frac73}(sl_4,f)\cong V_{\Z\a}\ \text{ with }(\a|\a)=4 ,
$$
of central charge $0$, $1$, $1$ respectively.
\end{proposition}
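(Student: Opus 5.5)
The proof has two halves. First, unitarity forces $k$ into a short interval, and the coset Virasoro algebra of the Heisenberg subalgebra then cuts it down to three points. Second, at those three points we identify the simple quotient with a known unitary vertex algebra.

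\emph{Necessity.} Suppose $W_k(sl_4,f)$ is unitary with respect to $\phi$. Since $W_k(sl_4,f)_1$ is at most $\C J$, the vector $\phi(J)$ is a multiple of $J$. Then \eqref{eq:top} and \eqref{31:level} give $(J,J)=\pm B$ up to a positive factor. I expect the sign analysis to force $B\in\R_{\ge0}$; it also forces $c\in\R$, hence $k\in\R$.

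If $B=0$, i.e.\ $k=-\frac83$, then $J$ lies in the radical. Substituting into the formula for $c$ gives $c=77-45-32=0$, so $(L,L)=0$ and $W=\C$.

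If $B>0$, the Heisenberg Sugawara vector $L^J=\frac1{2B}:JJ:$ is fixed by $\phi$. Exactly as in Lemma \ref{lem:coset}, $L-L^J$ is a quasiprimary Virasoro vector of central charge $c-1$ and norm $\frac12(c-1)$, so $c\ge1$. With $t=k+4>0$ this reads $24t^2-76t+60\le0$, i.e.\ $t\in[\frac32,\frac53]$. On this interval $c-1\le 76-2\sqrt{1440}<\frac12$.

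The Virasoro subalgebra generated by $L-L^J$ is unitary. By the Friedan--Qiu--Shenker/GKO classification, a unitary Virasoro algebra with $c-1<1$ must lie in the discrete series $1-\frac6{p(p+1)}$. The only member below $\frac12$ is $0$. Hence $c=1$, so $t\in\{\frac32,\frac53\}$, i.e.\ $k\in\{-\frac52,-\frac73\}$, and moreover $L=L^J$ in $W$.

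The main obstacle I expect is justifying the sign constraints on $\phi$ in this step. One has to rule out $\phi(J)$ being a non-real multiple of $J$, and to check that the Heisenberg Sugawara vector is genuinely $\phi$-invariant. I would handle both by the argument of Theorem \ref{prop:gen}.

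\emph{Identification at the three levels.} Once $L=L^J$, every element of $W$ has conformal weight $\ge\frac{q^2}{2B}$ on the $J_{(0)}$-eigenspace of eigenvalue $q$. In particular $W$ is a direct sum of Heisenberg Fock modules, and the $J_{(0)}$-invariant part, which contains the image of $W$, is just the Heisenberg algebra itself.

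At $k=-\frac52$ we have $\nu(k)=0$, so $G^\pm$ lie in the maximal ideal. All strong generators then lie in the Heisenberg subalgebra, and $W\cong\mathcal H$.

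At $k=-\frac73$ we have $B=4$ and $\nu(k)\ne0$, so $G^\pm$ survive. They have charge $\pm4$ and weight $2=\frac{16}{2\cdot4}$, so they are the highest weight vectors $e^{\pm\a}$ of Fock modules with $(\a|\a)=4$. The simple vertex algebra generated by $\mathcal H$ and $e^{\pm\a}$ is then the simple current extension $V_{\Z\a}$; I would invoke uniqueness of simple current extensions here.

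\emph{Sufficiency.} The three algebras $\C$, $\mathcal H$ and $V_{\Z\a}$ (a positive definite even lattice) are all unitary with respect to their standard involutions, and these involutions fix $L$. This gives the converse direction.
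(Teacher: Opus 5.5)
Your coset--FQS step is the same as the paper's, but the necessity half has a genuine gap exactly where you flagged it, and the remedy you propose does not work.

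A priori $\phi(J)=\zeta J$ with $\zeta$ an arbitrary phase, and \eqref{eq:top} gives $(J,J)=-\zeta B$. This is positive for a suitable $\zeta$ whatever $B\ne0$ is. Moreover $L^J$ is $\phi$-fixed for every such $\zeta$, since applying $\phi$ to \eqref{31:level} gives $\zeta^2B=\ov B$.

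Nor does $c\in\R$ force $k\in\R$. With $t=k+4$, $c=77-\frac{60}{t}-24t$ is real on the whole circle $|t|^2=\frac52$, where $c=77-48\,\Re t\ge 77-24\sqrt{10}>1$. On that circle the coset bound $c\ge1$ holds automatically, and FQS gives nothing once $c\ge2$. So infinitely many non-real levels survive every constraint you use.

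The argument of Theorem \ref{prop:gen} cannot close this. It rests on the integrality of the level of a non-abelian simple component of $\g^\natural$, whereas a rank one Heisenberg algebra is unitary for every positive value of $(J,J)$.

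Even for real $k$, the sign analysis does not give $B\ge0$. Taking $\zeta=1$ gives $(J,J)=-B$, which is positive for $k<-\frac83$. For $k<-4$ one has $c>152$, so neither $(L,L)\ge0$ nor the coset Virasoro algebra excludes these levels.

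The missing idea is to use the charged generators $G^\pm$. The spectrum of $J_{(0)}$ lies in $4\Z$, and $J_{(0)}\phi(G^+)=4\zeta^{-1}\phi(G^+)$. So if $G^+\ne0$ in $W_k(sl_4,f)$, i.e.\ if $\nu(k)\ne0$, then $\zeta=\pm1$ and $B\in\R$. If $\nu(k)=0$, then $k\in\{-2,-\frac52,-\frac83\}$ is real anyway. For $k<-\frac83$ one has $\nu(k)\ne0$ and $B<0$, hence $\zeta=1$. Then $\phi$ preserves the $J_{(0)}$-eigenspaces and $\phi(G^+)\in\C G^+$. Consequently $(G^+,G^+)\one=\phi(G^+)_{(3)}G^+$ is a vector of conformal weight $0$ and charge $8$, hence zero, contradicting positive definiteness. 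Only after this are you in the case $B>0$, where your interval $\frac32\le t\le\frac53$ and FQS finish the argument.

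Separately, your sufficiency direction is circular. You deduced $L=L^J$ at $k=-\frac52,-\frac73$ and $W=\C$ at $k=-\frac83$ from unitarity, so they cannot be used to prove it. You must show unconditionally that $L-L^J$ lies in the maximal ideal when $c=1$. It is quasiprimary, orthogonal to $\partial J$, to $G^\pm$ (by charge) and to $:JJ:$ (it commutes with $J$), and its norm is $\frac12(c-1)=0$. This is the verification of the hypotheses of \cite[Thm.~3.1]{AMP23} carried out in the paper. At $k=-\frac83$, $J$ lies in the radical. Hence so do $\partial J$, $:JJ:$ and $G^\pm=\pm\frac14J_{(0)}G^\pm$, and then so does $L$, whose norm is $\frac c2=0$ (alternatively cite \cite{AvEM}). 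Once $L=L^J$ is established this way, your Fock-module and simple-current argument at $k=-\frac73$ is an acceptable substitute for \cite[Prop.~2.6(i)]{AACLMMP}.
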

\begin{proof}
We first show that $W_k(sl_4,f)$ is unitary at the three levels. The level
$k=-\frac83=-4+\frac43$ is admissible and $W_{-8/3}(sl_4,f)=\C$ by
\cite[Thm.~8.6]{AvEM}. Let $k=-\frac52$ or $k=-\frac73$. Then $B=2$, resp.\ $B=4$,
and $c=1$, which is the central charge of the Sugawara vector
$L^J=\frac1{2B}:JJ:$ of the Heisenberg subalgebra generated by $J$. The
hypotheses of \cite[Thm.~3.1]{AMP23} hold, because the strong generators of
conformal weight $2$ different from $L$ are $G^\pm$, and $(L-L^J)_{(2)}G^\pm$ is
a vector of conformal weight $1$, i.e.\ a multiple of $J$, on which $J_{(0)}$ acts by $\pm4$, hence it
vanishes. Therefore $L=L^J$ in $W_k(sl_4,f)$. By \cite[Thm.~3.2]{AMP23} a strong
generator of conformal weight $\Delta$ on which $J_{(0)}$ acts by $q$ can be
non-zero in $W_k(sl_4,f)$ only if $\frac{q^2}{2B}=\Delta$. This excludes $W$, and
it excludes $G^\pm$ for $k=-\frac52$, where $\frac{16}{2B}=4$. Hence
$W_{-5/2}(sl_4,f)\cong \mathcal H$ (see also \cite[Prop.~4.4]{AACLMMP}), which is
unitary. For
$k=-\frac73$ we have $\frac{16}{2B}=2$, and $G^\pm\ne0$ in $W_{-7/3}(sl_4,f)$
because $\nu(-\frac73)\ne0$. Thus $W_{-7/3}(sl_4,f)$ is a simple vertex algebra
with conformal vector $L^J$, strongly generated by $J$ and by the fields
$G^\pm$. By \cite[Prop.~2.6(i)]{AACLMMP} it is either $\mathcal H$ or a lattice vertex
algebra of rank one; since $G^\pm\ne0$, $J_{(0)}G^\pm=\pm4G^\pm$ and
$G^\pm$ have conformal weight $2=\frac{16}{2B}$, we obtain
$W_{-7/3}(sl_4,f)\cong V_{\Z\a}$ with $(\a|\a)=\frac{16}B=4$ (this is the
compactified free boson of \cite{ACGHR}), which is unitary \cite[Section 5.4]{KMP}.

Conversely, let $W_k(sl_4,f)$ be unitary with respect to a conjugate linear
involution $\phi$, and assume that $k\ne-\frac83$. Then $B\ne0$, so that $J\ne0$
in $W_k(sl_4,f)$. Write $\phi(J)=\zeta J$ with $|\zeta|=1$. Applying $\phi$ to
\eqref{31:level} we get $\zeta^2B=\ov B$, and by \eqref{eq:top}
\begin{equation}\label{31:JJ}
(J,J)=-\zeta B>0 .
\end{equation}
We claim that $k$ is real. If $G^+=0$ in $W_k(sl_4,f)$, then $\nu(k)=0$. If
$G^+\ne0$, then $\phi(G^+)$ is a non-zero vector such that
$J_{(0)}\phi(G^+)=\zeta^{-1}\phi(J_{(0)}G^+)=4\zeta^{-1}\phi(G^+)$, hence
$4\zeta^{-1}\in4\Z$, i.e.\ $\zeta=\pm1$, and $B$ is real.

Assume that $k<-\frac83$. Then $\nu(k)\ne0$, so that $G^+\ne0$  in $W_k(sl_4,f)$ and $\zeta=\pm1$;
since $B<0$, \eqref{31:JJ} gives $\zeta=1$. Then $J_{(0)}\phi(G^+)=4\phi(G^+)$,
so that $\phi(G^+)\in\C G^+$, and by \eqref{eq:top}
$(G^+,G^+)\one=\phi(G^+)_{(3)}G^+\in\C\,G^+_{(3)}G^+=0$, because
$G^+_{(3)}G^+$ is a vector of conformal weight $0$ on which $J_{(0)}$ acts by
$8$. This contradicts the positivity of the form.

Assume now that $k>-\frac83$, so that $B>0$. The vector $L^J=\frac1{2B}:JJ:$ is
fixed by $\phi$, since $\zeta^2/\ov B=1/B$. As in the proof of Lemma
\ref{lem:coset}, $J$ being primary of conformal weight $1$, the vector
$L'=L-L^J$ is quasiprimary, it is a Virasoro vector of central charge $c-1$
commuting with $J$, and $(L',L')=\frac12(c-1)$. Hence (for $t=k+h^\vee=k+4$)
$$
c-1=-\frac{4(2t-3)(3t-5)}t\ \ge0,\qquad\text{i.e.}\qquad \tfrac32\le t\le\tfrac53 .
$$
On this interval $c\le77-24\sqrt{10}<\frac32$. On the other hand, by
\eqref{eq:qpinv}, $(v,L'_nu)=(L'_{-n}v,u)$, so that $\one$ generates a unitary
highest weight module of the Virasoro algebra with highest weight $0$ and
central charge $c-1\in[0,\frac12)$. By \cite{FQS} this forces $c-1=0$, i.e.\
$t=\frac32$ or $t=\frac53$.
\end{proof}

\begin{remark}
The algebra $W^k(sl_4,f_{sub})$ is the algebra $W^{(2)}_4$ of \cite{FS}, see
\cite{G}. Its unitarity has been studied in \cite{ACGHR}, where the necessary condition
$\frac43\le k+4\le\frac{15}8$, i.e.\ $c\ge0$, is obtained from the norms of the
states of low conformal weight, and it is shown that at $k+4=\frac53$ the
algebra is a compactified free boson. Proposition \ref{prop:31} shows that
unitarity holds exactly at three points of this interval. The constraint which
is responsible for this is $c\ge c_{\g^\natural}=1$, sharpened by the
classification of the unitary representations of the Virasoro algebra with
central charge less than $1$.
\end{remark}

\section{Non-distinguished nilpotent elements}\label{sec:nd}

Let $\g$ be a simple Lie algebra and let $\mathfrak s=span\{e,x,f\}$ be an $sl_2$-subalgebra of
$\g$, with centralizer $\g^\natural$. Recall that the nilpotent element $f$, or the subalgebra
$\mathfrak s$, is called \emph{distinguished} if $\g^\natural=0$. Principal nilpotent elements
are distinguished, and in $sl_n$ there are no other distinguished nilpotent elements. In this
section we remove from Theorem \ref{prop:gen}(a) the assumption that $\mathfrak s$ is even and replace the assumption 
that $\g^\natural$ is not abelian by $\g^\natural\ne 0$.

\begin{theorem}\label{nd:main}
Let $\g$ be a simple Lie algebra and let $\mathfrak s$ be an $sl_2$-subalgebra of $\g$
which is not distinguished. Then $W_k(\g,\mathfrak s)$ can be unitary for at most a finite
number of $k$.
\end{theorem}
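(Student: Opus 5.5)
Since $\mathfrak s$ is not distinguished, $\g^\natural\ne0$, and I will exploit the weight one space of $W_k(\g,\mathfrak s)$ as in the proof of Theorem \ref{prop:gen}. If $\g^\natural$ is not abelian, I argue exactly as in Theorem \ref{prop:gen}(a). I take a simple ideal $\mathfrak a$, whose level $k_{\mathfrak a}=ak+b$ has $a$ a positive multiple of the index, since $\g$ is a Lie algebra and the form is positive definite on real spans of roots. Unitarity forces $k_{\mathfrak a}\in\Z_{\ge0}$, via the case analysis on $\phi$-stability and \cite[Thm.~11.7]{VB}. The evenness hypothesis was used only to control the parity of $\g_{1/2}$ in the central charge formula. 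For a Lie algebra $\g$ with arbitrary $\mathfrak s$, the formula of \cite[Remark 2.2]{KRW} still gives
\[
c=\dim\g_0-\tfrac12\dim\g_{1/2}-\tfrac{12}{t}\|\rho-tx\|^2=\frac{Q(k)}{k+h^\vee},
\]
with leading coefficient $-12(x|x)<0$, because $(x|x)>0$ for $\g$ a Lie algebra. The inequality $c\ge0$ together with $k\ge -b/a$ then bounds $k$, and discreteness gives finiteness.

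The remaining case is $\g^\natural$ abelian and nonzero, i.e.\ a torus $\mathfrak t$. Here I plan to imitate Proposition \ref{prop:31}. Fix $0\ne v\in\mathfrak t\cap\h_\R$. The field $J=J^{\{v\}}$ has $J_{(1)}J=B(k)\one$ with $B(k)=(v|v)k+b_v$ affine and nonconstant, so $B\ne0$ away from one level. Under a unitarity involution $\phi$, I must first show $\phi$ preserves the weight one space and that, up to a real change of basis, $(J,J)=-\zeta B>0$. To get real $k$, I want a strong generator $G$ with nonzero $J_{(0)}$-eigenvalue, i.e.\ some $\mathfrak t$-weight of $\g^f$ that is nonzero. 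Such a weight exists: otherwise $\mathfrak t$ would centralize $\g^f$, hence centralize $\g$. Then $\phi(G)$ has eigenvalue $\bar\zeta$ times a real number. Since the $\mathfrak t$-weights on all of $W^k$ lie in a lattice, $\zeta=\pm1$, exactly as in Proposition \ref{prop:31}, unless $G$ vanishes in the simple quotient. That happens only at finitely many $k$, since the norm of the lowest weight vector of each weight space is a nonzero polynomial in $k$; this is the delicate point.

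With $k$ real, I use Lemma \ref{lem:coset} in the abelian form. The Sugawara vector $L^{\mathfrak t}$ of the Heisenberg subalgebra is $\phi$-fixed, and $L-L^{\mathfrak t}$ is quasiprimary with norm $\tfrac12(c-\dim\mathfrak t)$. This gives $c\ge\dim\mathfrak t\ge0$, which bounds $k$ on the side of $-h^\vee$ where $Q$ is concave. For the other side I need a second constraint, and I would obtain it from positivity of $(J,J)=\mp B(k)$, which gives a one-sided bound on $k$. Combining the two gives boundedness, but not discreteness.

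\textbf{Main obstacle.} Discreteness is the step I expect to be hardest. For it I would use the weight $G$: if $G\ne0$ has $\mathfrak t$-charge $q$ and conformal weight $\Delta$, unitarity of the $W$-module generated by $G$ over Heisenberg${}\otimes$Vir$_{c-\dim\mathfrak t}$ requires $\Delta-\frac{q^2}{2B}\ge0$. That is still only an inequality. So I would instead try to reduce to a Levi: pick a nonzero weight of $\mathfrak t$ so that, after passing to the centralizer $\mathfrak l$ of a suitable subtorus, $\mathfrak s$ lies in $[\mathfrak l,\mathfrak l]$ with nonabelian centralizer. I would then use a reduction-by-stages or coset argument to transport the integrality constraint. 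If that fails, I would fall back on the \cite{FQS} discreteness, as in Proposition \ref{prop:31}, applied to a coset Virasoro of central charge below $1$.
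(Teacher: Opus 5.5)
\textbf{Where the proposal stands.} Your non-abelian case is fine. For a Lie algebra the index is positive, and $c=Q(k)/(k+h^\vee)$ has leading coefficient $-12(x|x)<0$ whether or not $\mathfrak s$ is even, so the argument of Theorem \ref{prop:gen}(a) goes through. (The paper handles the non-even case differently: a strong generator of incompatible parity must vanish in a unitary simple quotient by Proposition \ref{previous}.)

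\textbf{The gap: discreteness when $\g^\natural$ is abelian and non-zero.} As you acknowledge, you only reach boundedness here, and neither proposed route to discreteness can work.
\begin{enumerate}
\item The Levi reduction cannot produce a non-abelian centralizer. For any subalgebra $\mathfrak l\supseteq\mathfrak s$, the centralizer of $\mathfrak s$ in $\mathfrak l$ is $\mathfrak l\cap\g^\natural$, which is abelian.
\item The \cite{FQS} fallback only discretizes central charges below $1$, and $c-\dim\g^\natural$ need not stay below $1$. Take $\g=sp_4$ with partition $(2,2)$, so $\g^\natural\cong\C$, and put $t=k+3$. Then $c=40-\frac{30}{t}-12t$ and $B(k)=4(t-1)$. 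The constraints $B>0$ and $c\ge 1$ leave $t\in[\frac54,2]$. On this interval $c-1$ reaches $39-12\sqrt{10}>1$, so a whole interval of levels survives.
\end{enumerate}

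\textbf{Two further issues with the involution.}
\begin{itemize}
\item You leave a sign case open. If $\zeta=+1$, positivity gives $B(k)<0$, which is an upper bound on $k$ and does not combine with $c\ge\dim\g^\natural$ into a bounded set. This case must be excluded: if $\phi(J)=J$ then $J_{(0)}$ is skew-Hermitian, so it cannot have a non-zero real eigenvalue on a non-zero $G$.
\item For $\dim\g^\natural>1$, nothing forces $\phi(J)\in\C J$. The paper proves $\phi(J)=-J$ directly. It takes $J=J^{\{h\}}$ with $h$ the differential of a cocharacter, so $J_{(0)}$ has integer spectrum. It then splits $J=J'+\ii J''$ with $J',J''$ fixed by $\phi$. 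The operators $J'_{(0)}$ (skew-Hermitian) and $(\ii J'')_{(0)}$ (Hermitian) commute and sum to an operator with integer spectrum, so $J'_{(0)}=0$. Hence $J'=0$, since an element of $\g^\natural$ acting trivially on all of $\g^f$ is zero.
\end{itemize}

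\textbf{The missing idea: turning the inequality into finiteness.} You have the right inequality $\Delta\ge q^2/(2(J,J))$ (this is Lemma \ref{nd:bound}); the point is to apply it to powers of $G$.
\begin{enumerate}
\item Apply it to $v_M=:\!G^M\!:$, which has conformal weight $M\Delta$ and charge $Mq$. The inequality becomes $M\Delta\ge M^2q^2/(2B(k))$.
\item On the allowed range, $B(k)$ is bounded above by a constant $K$ independent of $k$. Indeed, $B(k)=(J,J)>0$ forces $k$ real with $k>-h^\vee$. Then $c\ge 0$ with $Q$ concave gives $k\le k_+$, so $B(k)\le (k_++h^\vee)(h|h)\le K$.
\item Fix one integer $M>2K\Delta/q^2$. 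Then the image of $v_M$ in $W_k(\g,\mathfrak s)$ must vanish at every unitary level.
\item Finiteness follows from Lemma \ref{nd:finite}. A fixed PBW monomial lies in the maximal ideal of $W^k(\g,\mathfrak s)$ for only finitely many $k$, because the expectation values $\langle u_{(n)}v\rangle$ are rational functions of $k$ and $W^k(\g,\mathfrak s)$ is simple for generic $k$.
\end{enumerate}
The same lemma is what makes your ``delicate point'' rigorous. Whether $G$ vanishes in the simple quotient is governed by its pairing with the whole space of that weight, not by the norm of a single vector.
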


\begin{remark}\label{nd:rel}
(1) Principal $W$-algebras are unitary for infinitely many $k$ (Theorem \ref{12}). Hence, by
Theorem \ref{nd:main}, the finiteness of the set of unitary levels remains open only for the
distinguished non-principal $sl_2$-subalgebras, which exist exactly in types $B_n$ $(n\ge4)$, $C_n$ $(n\ge3)$, $D_n$ $(n\ge4)$, $E_6$, $E_7$, $E_8$, $F_4$, $G_2$.

(2) The proof of Theorem \ref{nd:main} distinguishes three cases: (a) $\mathfrak s$ is not
even; (b) $\mathfrak s$ is even and $\g^\natural$ is not abelian; (c) $\mathfrak s$ is even and
$\g^\natural$ is abelian and non-zero. Case (a) follows from Proposition \ref{previous} and
Lemma \ref{nd:finite}, and case (b) is Theorem \ref{prop:gen}(a); only case (c) requires a new
argument. In case (c), $\g^\natural$ is a torus, being an abelian Lie algebra of a reductive
group, the centralizer of $\mathfrak s$ in the adjoint group. Then
$\mathfrak l=\{a\in\g:[a,\g^\natural]=0\}$ is a Levi subalgebra of $\g$ containing
$\mathfrak s$, its centre is $\g^\natural$, and $f$ is distinguished in $\mathfrak l$. Thus
$\mathfrak l$ is the Levi subalgebra appearing in the Bala--Carter label of $f$, and
$\g^\natural$ is the centre of $\mathfrak l$.

(3) For classical $\g$, by Proposition \ref{prop:blocks},
case (c) occurs exactly when all the parts of the partition of $f$ have the same parity and,
moreover: for $sl_n$, the parts are distinct and their number $m$ is at least $2$, and then
$\g^\natural\cong\C^{m-1}$; for $so_N$ (all parts odd) and $sp_{2n}$ (all parts even), every
part has multiplicity at most $2$ and the number $r$ of parts of multiplicity $2$ is at least
$1$, and then $\g^\natural\cong so_2^{\oplus r}\cong\C^{r}$. For instance $\g^\natural\cong\C$
for the partition $(3,1)$ in $sl_4$, for which the unitary levels are determined in
Proposition \ref{prop:31}, and for the partition $(2,2)$ in $sp_4$.

(4) For exceptional $\g$, case (c) occurs exactly for the five orbits in the following table;
there is none in $G_2$ and $F_4$. We use the Bourbaki numbering of the simple roots
$\a_1,\dots,\a_\ell$. In each case $\g^\natural$ is spanned by the fundamental coweights
$\omega_i^\vee$ of the simple roots $\a_i$ not in $\mathfrak l$, so that $\g^\natural$ acts on a
root vector by the coefficients of these $\a_i$ in its root. The last column lists the non-zero
charges of $\g^\natural$ on $\g^f$ and, for each of them, the conformal weights of the
corresponding strong generators $J^{\{a\}}$ of $W^k(\g,\mathfrak s)$, one for each listed
weight.
\begin{center}
\renewcommand{\arraystretch}{1.2}
\small
\begin{tabular}{llll}
\hline
orbit & $\mathfrak l$ & $\g^\natural$ & charges: conformal weights of the charged generators\\
\hline
$E_6$, $D_4(a_1)$ & $D_4$ & $\C\omega_1^\vee\oplus\C\omega_6^\vee$ & $\pm(1,0)$, $\pm(0,1)$, $\pm(1,1)$: $2,3$\\
$E_6$, $D_5$ & $D_5$ & $\C\omega_6^\vee$ & $\pm1$: $3,6$\\
$E_7$, $E_6(a_1)$ & $E_6$ & $\C\omega_7^\vee$ & $\pm1$: $3,5,7$\\
$E_8$, $D_5+A_2$ & $D_5+A_2$ & $\C\omega_6^\vee$ & $\pm1$: $2,3,4,5,6,7$;\ $\pm2$: $2,4,5,6$;\ $\pm3$: $3,6$;\ $\pm4$: $2$\\
$E_8$, $D_7(a_1)$ & $D_7$ & $\C\omega_1^\vee$ & $\pm1$: $3,4,5,6,8,9$;\ $\pm2$: $2,6$\\
\hline
\end{tabular}
\end{center}
\end{remark}

We need two lemmas. The first one concerns the universal $W$-algebra. Recall (see the proof of
Proposition \ref{prop:conformal}) that $W^k(\g,\mathfrak s)$ is strongly generated by fields
$J^{\{a_i\}}$, where $\{a_i\}$ is a basis of $\g^f$ consisting of eigenvectors of $\ad x$, and
$J^{\{a_i\}}$ has conformal weight $\Delta_i=1-j$ if $[x,a_i]=ja_i$. More precisely, the
ordered monomials $:\!\partial^{m_1}J^{\{a_{i_1}\}}\cdots\partial^{m_s}J^{\{a_{i_s}\}}\!:$
form a basis of $W^k(\g,\mathfrak s)$ \cite[Theorem 4.1(b), Remark 4.2]{KW04}; we call them
\emph{PBW monomials}. There is a finite set $P\subset\C$, containing $-h^\vee$, such that for
$k\notin P$ the generators can be chosen in such a way that the structure constants of all
$n$-th products with respect to the PBW monomials are rational functions of $k$ without poles
outside $P$. Indeed, the complex of \cite[Section 4]{KW04} whose homology is
$W^k(\g,\mathfrak s)$ is a fixed graded vector space on which the $n$-th products and the
differential depend polynomially on $k$, and \cite[Theorem 4.1]{KW04} holds for every $k$
(cf.\ also \cite[\S5]{G} and the proof of Proposition \ref{prop:FFall}). Thus, for
$k\notin P$, the spaces $W^k(\g,\mathfrak s)$ are identified with a fixed graded vector space
with a fixed basis.

\begin{lemma}\label{nd:finite}
Let $v$ be a PBW monomial. Then the set $N(v)$ of the $k\in\C\setminus P$ such that $v$
belongs to the maximal ideal of $W^k(\g,\mathfrak s)$ is finite.
\end{lemma}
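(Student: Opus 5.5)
The plan is to characterize membership in the maximal ideal by the invariant bilinear form and then use polynomial dependence on $k$. For $k\notin P$, $W^k(\g,\mathfrak s)$ is identified with a fixed graded vector space $\mathfrak W$ with fixed PBW basis. The maximal ideal is the radical of the normalized invariant symmetric bilinear form $(\cdot,\cdot)_k$; this form exists and is unique since $W^k_0=\C\one$ and $L_1W^k_1=0$ (cf.\ \cite{Li} and the proof of Lemma \ref{lem:knat0}). Because distinct $L_0$-eigenspaces are orthogonal, $v$ of conformal weight $\Delta$ lies in the maximal ideal if and only if $(u,v)_k=0$ for every PBW monomial $u$ of weight $\Delta$.

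First I will show that each Gram entry $(u,v)_k$ is a rational function of $k$ with poles only in $P$. To do this, I express $(u,v)_k$ as the vacuum coefficient of a product of modes. Using invariance, each strong generator $J^{\{a\}}$ moves to the other side as a finite combination of modes of $L_1^tJ^{\{a\}}$ (formula \eqref{eq:modeinv} for the symmetric form). This reduces $(u,v)_k$ to $\langle w\rangle$ for some vector $w$ built from $v$ by $n$-th products with generators and $L_1$-images. All structure constants involved are rational in $k$ with no poles outside $P$, so the Gram matrix $G_\Delta(k)$ has entries in the ring of rational functions regular on $\C\setminus P$. Consequently $N(v)$ is the zero set in $\C\setminus P$ of finitely many such rational functions, namely the entries of $G_\Delta(k)v$, and it is therefore either finite or all of $\C\setminus P$.

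It remains to exclude the possibility that $v$ lies in the maximal ideal for every $k\notin P$. I expect this to be the main obstacle. My approach is to show that the universal $W$-algebra is simple for generic $k$, so that $\det G_\Delta(k)\not\equiv0$. A first option is to cite genericity results: the Kac--Wakimoto determinant formula, or simplicity of $W^k(\g,\mathfrak s)$ for $k\notin\Q$ via the free field realization and Arakawa's results. A more self-contained alternative is to examine the leading behaviour as $k\to\infty$. After rescaling the generators by suitable powers of $k$, the $\lambda$-brackets of $W^k$ degenerate as $k\to\infty$ to those of a free-field or Zhu-type limit, namely a generalized free field algebra. There the normalized form on PBW monomials is non-degenerate, for example diagonal in the leading order, with nonzero entries given by products of $k(a_i|a_j)$-type pairings on $\g^f$ (a nondegenerate pairing between $\g^f$ and $\g^e$ via the form). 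This leading-order argument would show $\det G_\Delta(k)$ is a nonzero rational function, so $G_\Delta(k)$ is invertible outside finitely many $k$. Hence $N(v)$ is finite; in fact every nonzero vector lies outside the maximal ideal for all but finitely many $k$.
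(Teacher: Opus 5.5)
Your proposal is correct and follows essentially the paper's route. In both, membership of $v$ in the maximal ideal is encoded by rational functions of $k$ that are regular off $P$; if they vanish at infinitely many $k$ they vanish identically, and this is ruled out by the simplicity of $W^k(\g,\mathfrak s)$ for generic $k$.

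There are two small differences. First, the paper tests whether the ideal generated by $v$ contains $\one$, i.e.\ it uses the expectation values $\langle u_{(n)}v\rangle$ rather than Gram entries of an invariant form. This spares it from justifying that form for non-even $\mathfrak s$, where the form is not symmetric and Lemma~\ref{lem:knat0} does not apply. Second, the paper quotes generic simplicity from \cite[Thm.~3.5]{CLtr}, which is obtained there from a large-level free-field limit of the kind you sketch, rather than from a determinant formula.
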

\begin{proof}
The ideal generated by $v$ is spanned by the vectors $u_{(n)}v$, where $u$ runs over the PBW
monomials and $n\in\Z$. It is proper if and only if it does not contain $\one$, i.e.\ if and
only if $\langle u_{(n)}v\rangle=0$ for all such $u$ and $n$. These expectation values are
rational functions of $k$ without poles outside $P$. If $N(v)$ were infinite, all of them would
vanish identically, hence $v$ would lie in the maximal ideal for every $k\notin P$. This is
not possible, since $v\ne0$ and $W^k(\g,\mathfrak s)$ is simple for all $k$ outside a countable set \cite[Thm.~3.5]{CLtr} (and the corollary following it).
\end{proof}

The second lemma refines Lemma \ref{lem:coset} in the case of a Heisenberg subalgebra.

\begin{lemma}\label{nd:bound}
Let $V$ be a simple unitary conformal vertex algebra, with conjugate linear involution $\phi$,
and let $J\in V_1$ be a non-zero even vector such that $\phi(J)=-J$. Then
$J_{(1)}J=(J,J)\,\one$ with $(J,J)>0$, and if $v\in V_\Delta$ is a non-zero
vector such that $J_{(0)}v=Qv$, then $Q\in\R$ and
\[
\Delta\ \ge\ \frac{Q^2}{2(J,J)}.
\]
\end{lemma}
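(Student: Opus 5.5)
The plan is to combine \eqref{eq:top} with positivity of the coset Virasoro vector, as in the proof of Lemma \ref{lem:coset} and of Proposition \ref{prop:31}, but restricted to the charge-$Q$ subspace. The argument has three steps.

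\textbf{Step 1: the first assertion.} We have $g(J)=e^{-\pi\ii}\phi(J)=J$, since $\Delta_J=1$ and $p(J)=0$. Next, $J$ is quasiprimary: $L_1J\in V_0=\C\one$, and $(\one,L_1J)=(L_{-1}\one,J)=0$ by \eqref{eq:Ladj}. Simplicity of $V$ makes the form definite, so $L_1J=0$. Then \eqref{eq:top} gives $J_{(1)}J=(J,J)\one$, and $(J,J)>0$ by positive definiteness, since $J\ne0$.

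\textbf{Step 2: $J_{(0)}$ is self-adjoint and $Q\in\R$.} By \eqref{eq:qpinv} applied to the quasiprimary $J$ (with $g(J)=J$), we get $(u,J_{(0)}w)=(J_{(0)}u,w)$. Taking $u=w=v$ gives $Q(v,v)=\ov Q(v,v)$, hence $Q\in\R$.

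The same formula gives $(u,J_{(n)}w)=(J_{(-n)}u,w)$ for all $n$, while the $J_{(n)}$ satisfy the Heisenberg relations $[J_{(m)},J_{(n)}]=m(J,J)\delta_{m,-n}$. Here we use $J_{(0)}J=0$, which follows from skew-symmetry for an even weight-one vector: $J_{(0)}J=-J_{(0)}J+T(J_{(1)}J)$, and $T\one=0$. Set $B=(J,J)$ and $L^J=\frac1{2B}:JJ:$. Then $L^J$ is a Virasoro vector of central charge $1$ and is $\phi$-fixed.

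\textbf{Step 3: the inequality.} This is the main point. The Heisenberg modes act on $V$, and $L'=L-L^J$ commutes with them. This requires $J$ to be primary of weight $1$ for $L$. We have $L_nJ=0$ for $n\ge2$ by weight, and $L_1J=0$ from Step 1, so $J$ is primary. Hence $L'_0$ commutes with $J_{(n)}$, and $L'_0=L_0-L^J_0$ with
\[
L^J_0=\frac1{2B}J_{(0)}^2+\frac1B\sum_{j>0}J_{(-j)}J_{(j)}.
\]
The remaining task is to show $(v,L'_0v)\ge0$. Here $L'$ is a $\phi$-fixed quasiprimary vector, and the Virasoro modes $L'_n$ satisfy $(u,L'_nw)=(L'_{-n}u,w)$. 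Since $L'_0=L'_1L'_{-1}-\tfrac12 [L'_1,L'_{-1}]$ does not obviously help, I would instead argue as follows. The operators $L'_n$, $n\ge1$, lower the $L_0$-weight, and $V$ has weights bounded below. Let $v_0$ be a vector of minimal $L'_0$-eigenvalue in the finite-dimensional space $V_\Delta\cap\ker(J_{(0)}-Q)$. Then one shows that $L'_0$ is bounded below by $0$ using $(L',L')=\frac12(c-1)\ge0$ together with the unitary positive energy Virasoro representation theory: the $L'_0$-spectrum of a unitary Virasoro module is nonnegative, since each unitary highest weight has $h\ge0$, and $V$ decomposes orthogonally into $L'$-highest-weight submodules because the $L'_0$-eigenvalues are bounded below on each joint eigenspace.

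Alternatively, and more simply: $(v,L^J_0v)\le(v,L_0v)$. To see this, note that the sum $\sum_{j>0}(v,J_{(-j)}J_{(j)}v)=\sum_j\|J_{(j)}v\|^2\ge0$. Combined with $L'_0\ge0$, this gives $\Delta\|v\|^2\ge\frac{Q^2}{2B}\|v\|^2$, and the stated inequality follows.

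The hard step is the nonnegativity of $L'_0$ on $V$. I would derive it via orthogonal decomposition into Virasoro$\otimes$Heisenberg highest weight modules, using unitarity and $h\ge0$ for unitary Virasoro highest weights, where $h=(w,L'_0w)/\|w\|^2$ and $L'_{-1}$, $L'_1$ are adjoint.
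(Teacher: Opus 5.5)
Your proposal is correct and is essentially the paper's proof. $L_1J=0$ and $g(J)=J$ give $J_{(1)}J=(J,J)\one$ and the self-adjointness of $J_{(0)}$, and the bound then follows from $L_0=L^J_0+L'_0$ with $L'=L-L^J$, the expansion of $L^J_0$, and $L'_0\ge0$. For $L'_0\ge0$ the paper uses the same mechanism you name, without any orthogonal decomposition. From a joint eigenvector of $L_0$ and $L'_0$ it applies modes $L'_m$, $m>0$, while the result stays non-zero; this stops because these modes lower the $L_0$-weight, not because of a bound on $L'_0$. This reaches an $L'$-singular $w'$ with $0\le\|L'_{-1}w'\|^2=2\lambda'\|w'\|^2$, so neither $(L',L')=\frac12(c-1)$ nor the Heisenberg factor is needed.
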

\begin{proof}
By Theorem \ref{thm:kmp1}(1), $L_1J=0$, so that $J$ is primary and $g(J)=-\phi(J)=J$. Hence
\eqref{eq:top} gives $J_{(1)}J=(J,J)\one$, and \eqref{eq:qpinv} gives
$(v,J_nu)=(J_{-n}v,u)$, where $J_n=J_{(n)}$; in particular $Q$ is real. As in the proof of
Lemma \ref{lem:coset}, the vector $L'=L-L^J$, where $L^J=\frac1{2(J,J)}:\!JJ\!:$, is a
quasiprimary Virasoro vector commuting with $J$, and it is fixed by $\phi$. Therefore the
operators $L'_m$ commute with the operators $J_n$, satisfy the Virasoro relations and
$[L_0,L'_m]=-mL'_m$, and $(v,L'_mu)=(L'_{-m}v,u)$ by \eqref{eq:qpinv}.

We claim that $(w,L'_0w)\ge0$ for all $w\in V$. Since $L'_0$ is Hermitian and commutes with
$L_0$, we may assume that $L'_0w=\la w$ and $L_0w=dw$. The vectors
$L'_{m_1}\cdots L'_{m_r}w$ with all $m_i>0$ vanish if $\sum m_i>d$; choose a non-zero one,
$w'$, with $\sum m_i$ maximal (possibly $w'=w$). Then $L'_mw'=0$ for $m>0$ and
$L'_0w'=\la'w'$ with $\la'=\la-\sum m_i\le\la$, so that
$0\le(L'_{-1}w',L'_{-1}w')=(w',L'_1L'_{-1}w')=2\la'(w',w')$ and $\la\ge\la'\ge0$.

Finally, $L^J_0=\frac1{2(J,J)}J_0^2+\frac{1}{(J,J)}\sum_{j>0}J_{-j}J_j$, hence
\[
\Delta\,(v,v)=(v,L^J_0v)+(v,L'_0v)\ \ge\ \frac{Q^2}{2(J,J)}(v,v)+\frac{1}{(J,J)}\sum_{j>0}(J_jv,J_jv)\ \ge\ \frac{Q^2}{2(J,J)}(v,v).\qedhere
\]
\end{proof}

\begin{proof}[Proof of Theorem \ref{nd:main}]
Let $U$ be the set of the $k\ne-h^\vee$ such that $W_k(\g,\mathfrak s)$ is unitary, and let
$E=\bigcup_iN(J^{\{a_i\}})$ be the set of the $k\notin P$ at which some strong generator
vanishes in $W_k(\g,\mathfrak s)$; it is finite by Lemma \ref{nd:finite}.

If $\mathfrak s$ is not even, some strong generator $J^{\{a_i\}}$ is not compatible (Remark
\ref{lem:even}), hence it vanishes in $W_k(\g,\mathfrak s)$ for $k\in U$ by Proposition
\ref{previous}; thus $U\subseteq P\cup E$. If $\mathfrak s$ is even and $\g^\natural$ is not
abelian, $U$ is finite by Theorem \ref{prop:gen}(a). We may therefore assume that $\mathfrak s$ is
even and that $\g^\natural$ is abelian and non-zero.

Then $\g^\natural$ is the Lie algebra of a torus $T$ of the adjoint group, the centralizer of
$\mathfrak s$ in the adjoint group being reductive, and $\g^\natural$ is spanned by the
differentials of the cocharacters of $T$. Hence we may choose the basis $\{a_i\}$ of $\g^f$ in
such a way that, moreover, $[a,a_i]=\a_i(a)a_i$ for $a\in\g^\natural$, with
$\a_i\in(\g^\natural)^*$, and we can fix a non-zero $h\in\g^\natural$, the differential of a
cocharacter of $T$, so that $\ad h$ is diagonalizable on $\g$ with integer eigenvalues; in
particular $\a_i(h)\in\Z$ for all $i$. We choose the generators so that
${J^{\{a\}}}_{(0)}J^{\{a_i\}}=\a_i(a)J^{\{a_i\}}$ for $a\in\g^\natural$ \cite[Thm.~4.1]{AMP23};
then $J^{\{a\}}_{(0)}$ acts on a PBW monomial by the sum of the scalars $\a_i(a)$ attached to
its factors. An element of $\g^\natural$ which centralizes $\g^f$ is zero, because $\g$ is
generated by $\g^f$ as an $\mathfrak s$-module and has trivial centre. In particular
$q:=\a_{i_0}(h)\ne0$ for some $i_0$; set $G=J^{\{a_{i_0}\}}$ and $\Delta=\Delta_{i_0}$.
Normalize $(\cdot|\cdot)$ by \eqref{s:norm}, so that $\operatorname{tr}_\g(\ad h)^2=2h^\vee(h|h)$.
By the formula for the $\l$-bracket given in the proof of Theorem \ref{prop:gen}, where now
$\g_{1/2}=0$,
\begin{equation}\label{nd:Bk}
{J^{\{h\}}}_{(1)}J^{\{h\}}=B(k)\,\one,\qquad
B(k)=(k+h^\vee)(h|h)-\tfrac12\operatorname{tr}_{\g_0}(\ad h)^2 .
\end{equation}
Here $(h|h)>0$, because $2h^\vee(h|h)=\operatorname{tr}_\g(\ad h)^2$ is a sum of squares of
integers, which is non-zero since $h\ne0$ and $\g$ has trivial centre; similarly
$\operatorname{tr}_{\g_0}(\ad h)^2\ge0$.

Next we fix a constant $K$ and a vector $v_M$ which do not depend on $k$. By \eqref{c2}, in
which now $\g_{1/2}=0$, we have $c(\g,\mathfrak s,k)=Q(k)/(k+h^\vee)$, where $Q$ is a polynomial
of degree $2$ with leading coefficient $-12(x|x)<0$, depending only on $\mathfrak s$. If $Q$ has
a real root, let $k_+$ be the largest one and put $K=\max\{1,(k_++h^\vee)(h|h)\}$; otherwise
put $K=1$. Fix an integer $M>2K\Delta/q^2$ and let $v_M=\,:\!G^M\!:$. Recall that for $k\notin P$
the spaces $W^k(\g,\mathfrak s)$ are identified with a fixed graded vector space with a fixed
basis of PBW monomials; thus $v_M$ is one and the same PBW monomial for all $k\notin P$, and
$K$, $M$, $v_M$ depend only on $\mathfrak s$, $h$ and $i_0$. Moreover $v_M$ has conformal weight
$M\Delta$, and $J^{\{h\}}_{(0)}v_M=Mq\,v_M$.

Let $k\in U\setminus(P\cup E)$, let $\phi$ be the corresponding conjugate linear involution of
$W=W_k(\g,\mathfrak s)$, and denote by $J$ the image of $J^{\{h\}}$ in $W$. We claim that
$\phi(J)=-J$. Write $J=J'+\ii J''$ with $\phi(J')=J'$, $\phi(J'')=J''$. By \eqref{eq:qpinv},
$X=J'_{(0)}$ is skew-Hermitian and $Y=(\ii J'')_{(0)}$ is Hermitian, and they commute, because
$W_1$, being the image of $\g^\natural$, is an abelian Lie algebra. Hence on each $W_n$ they
are simultaneously diagonalizable, with purely imaginary, resp.\ real, eigenvalues. Since
$X+Y=J_{(0)}$ has integer eigenvalues, $X=0$. Now $J'$ is the image of $J^{\{y\}}$ for some
$y\in\g^\natural$, so that $X$ acts on the image of $J^{\{a_i\}}$, which is non-zero because
$k\notin E$, by $\a_i(y)$. Therefore $\a_i(y)=0$ for all $i$, i.e.\ $y$ centralizes $\g^f$;
thus $y=0$ and $J=\ii J''$, as claimed. Moreover $J\ne0$, since $J_{(0)}$ acts by $q\ne0$ on
the image of $G$.

By Lemma \ref{nd:bound} and \eqref{nd:Bk} we have $B(k)=(J,J)>0$; since $(h|h)>0$ and
$\operatorname{tr}_{\g_0}(\ad h)^2\ge0$, this implies that $k$ is real and $k>-h^\vee$.
Moreover $c(\g,\mathfrak s,k)=2(L,L)\ge0$ (see the proof of Theorem \ref{prop:gen}), hence
$Q(k)\ge0$. As $Q$ is concave, it has a real root and $k\le k_+$, so that
\begin{equation}\label{nd:K}
0<B(k)\le(k+h^\vee)(h|h)\le K .
\end{equation}
If the image of $v_M$ in $W$ were non-zero, Lemma \ref{nd:bound} and \eqref{nd:K} would give
\[
M\Delta\ \ge\ \frac{M^2q^2}{2B(k)}\ \ge\ \frac{M^2q^2}{2K},
\]
against the choice of $M$. Hence $v_M$ lies in the maximal ideal of $W^k(\g,\mathfrak s)$, i.e.\
$k\in N(v_M)$. Since $v_M$ does not depend on $k$, we have proved that
$U\subseteq P\cup E\cup N(v_M)$, which is a finite set by Lemma \ref{nd:finite}.
\end{proof}

By Theorem \ref{nd:main} and Remark \ref{nd:rel}, Conjecture \ref{nd:conj} below is open only
for the distinguished non-principal $sl_2$-subalgebras; in particular it holds in types $A_n$, $B_2$, $B_3$ (Remark \ref{nd:comments}(1)).

\begin{conjecture}\label{nd:conj}
Let $\g$ be a simple Lie algebra and $\mathfrak s$ a non-zero $sl_2$-subalgebra of $\g$. Then
$W_k(\g,\mathfrak s)$ is unitary for infinitely many $k$ if and only if $\mathfrak s$ is a
principal subalgebra.
\end{conjecture}

\begingroup
\begin{remark}\label{nd:comments}
We add some comments on Conjecture \ref{nd:conj}.

(1) The ``if'' part of Conjecture \ref{nd:conj} is Theorem \ref{12}, and the ``only if'' part
holds for $\mathfrak s$ not distinguished by Theorem \ref{nd:main}. Since in types $A_n$, $B_2$
and $B_3$ the principal nilpotent elements are the only distinguished ones (Remark
\ref{nd:rel}(1)), Conjecture \ref{nd:conj} holds for $\g$ of these types.

(2) If $\mathfrak s$ is distinguished, then $\g^\natural=0$, hence $W^k(\g,\mathfrak s)_1=0$, and
the methods of this section, which rely on the space of weight one, give no information. Some
information comes from the Virasoro subalgebra. Since $\mathfrak s$ is even, by \cite[Remark
2.2]{KRW} we have $c(\g,\mathfrak s,k)=A-\frac B{k+h^\vee}-C(k+h^\vee)$, with
$A=\dim\g_0+24(\rho|x)$, $B=12\|\rho\|^2$ and $C=12\|x\|^2$. If $k>-h^\vee$ is real and
$W_k(\g,\mathfrak s)$ is unitary, then $0\le c(\g,\mathfrak s,k)\le c_{\max}:=A-2\sqrt{BC}$
and, by the theorem of Friedan, Qiu and Shenker \cite{FQS} applied to the Virasoro module
generated by $\one$, either $c(\g,\mathfrak s,k)\ge1$ or $c(\g,\mathfrak s,k)=1-\frac6{m(m+1)}$
for some integer $m\ge2$. Since each value of $c$ is attained at most twice, there are only
finitely many such levels if $c_{\max}<1$. This happens for several distinguished nilpotent
elements, but not for all of them.

(3) Conjecture \ref{nd:conj} does not extend to basic Lie superalgebras, as minimal
$W$-algebras show \cite{KMP1}.
\end{remark}
\endgroup
\bigskip

{\bf Acknowledgements}. We would like to thank Claude
for helping us throughout the whole paper, especially  in the proof of Theorem \ref{thm:main}. We also thank  Dra\v{z}en Adamovi\'c,    Edward Frenkel and Andrew Linshaw for
correspondence.

\vskip5pt
    \footnotesize{
\noindent{\bf V.K.}: Department of Mathematics, MIT, 77
Mass. Ave, Cambridge, MA 02139;\newline
{\tt kac@math.mit.edu}
\vskip5pt
\noindent{\bf P.M-F.}: Politecnico di Milano, Polo regionale di Como,
Via Anzani 42, 22100, Como, Italy;\newline {\tt pierluigi.moseneder@polimi.it}
\vskip5pt
\noindent{\bf P.P.}: Dipartimento di Matematica, Sapienza Universit\`a di Roma, P.le A. Moro 2,
00185, Roma, Italy;\newline {\tt papi@mat.uniroma1.it}, Corresponding author
}
\end{document}